\begin{document}

\thispagestyle{empty}

\newtheorem{Lemma}{\bf LEMMA}[section]
\newtheorem{Theorem}[Lemma]{\bf THEOREM}
\newtheorem{Claim}[Lemma]{\bf CLAIM}
\newtheorem{Corollary}[Lemma]{\bf COROLLARY}
\newtheorem{Observation}[Lemma]{\bf OBSERVATION}
\newtheorem{Proposition}[Lemma]{\bf PROPOSITION}
\newtheorem{Example}[Lemma]{\bf EXAMPLE}
\newtheorem{Fact}[Lemma]{\bf FACT}
\newtheorem{Definition}[Lemma]{\bf DEFINITION}
\newtheorem{Notation}[Lemma]{\bf NOTATION}

\newcommand{\restrict}{\mbox{$\mid\hspace{-1.1mm}\grave{}$}}
\newcommand{\covers}{\mbox{$>\hspace{-2.0mm}-{}$}}
\newcommand{\covered}{\mbox{$-\hspace{-2.0mm}<{}$}}
\newcommand{\notcover}{\mbox{$>\hspace{-2.0mm}\not -{}$}}

\newcommand{\boldalpha}{\mbox{\boldmath $\alpha$}}
\newcommand{\boldbeta}{\mbox{\boldmath $\beta$}}
\newcommand{\boldgamma}{\mbox{\boldmath $\gamma$}}
\newcommand{\boldxi}{\mbox{\boldmath $\xi$}}
\newcommand{\boldlambda}{\mbox{\boldmath $\lambda$}}
\newcommand{\boldmu}{\mbox{\boldmath $\mu$}}

\newcommand{\barzero}{\bar{0}}

\newcommand{\sfq}{{\sf q}}
\newcommand{\sfe}{{\sf e}}
\newcommand{\sfk}{{\sf k}}
\newcommand{\sfr}{{\sf r}}
\newcommand{\sfc}{{\sf c}}
\newcommand{\restr}{\negmedspace\upharpoonright\negmedspace}

\title{Varieties of Regular Pseudocomplemented de Morgan Algebras}
\author{M.~E.~Adams, H.~P.~Sankappanavar, and J\'{u}lia Vaz de Carvalho\footnote{This work was partially supported by the Funda\c{c}\~{a}o para a Ci\^{e}ncia e Tecnologia (Portuguese Foundation for Science and Technology) through the projects UID/MAT/00297/2013 and UID/MAT/00297/2019 (Centro de Matem\'{a}tica e Aplica\c{c}\~{o}es).} }
\date{}
\maketitle
{\color{red} This paper has appeared in the journal Order (published online on January 10, 2020).}

\begin{abstract}

In this paper, we investigate the varieties  $\mathbf{M}_n$ and $\mathbf{K}_n$  of regular pseudocomplemented de Morgan and Kleene algebras of range $n$, respectively. 
 Priestley duality as it applies to pseudocomplemented de Morgan algebras is used. We characterise the dual spaces of the simple (equivalently, subdirectly irreducible) algebras in $\mathbf{M}_n$ and explicitly describe the dual spaces of the simple algebras in $\mathbf{M}_1$ and $\mathbf{K}_1$. We show that the variety $\mathbf{M}_1$ is locally finite, but this property does not extend to $\mathbf{M}_n$ or even $\mathbf{K}_n$ for $n\geq 2$. We also show that the lattice of subvarieties of $\mathbf{K}_1$ is an $\omega+1$ chain and the cardinality of the lattice of subvarieties of either $\mathbf{K}_2$ or $\mathbf{M}_1$ is $2^{\omega}$. A description of the lattice of subvarieties of $\mathbf{M}_1$ is given.

\end{abstract}

\medskip

\noindent{\small 2010  {\it Mathematics subject classification}: $Primary \ 06D30, 06D15, 03G25;  Secondary  \ 08B15, 06D50, 03G10      $.}\\
\noindent{\small{\it Keywords}: variety of algebras, lattice of subvarieties, regular pseudocomplented de Morgan algebra (of range $n$), 
discriminator variety, simple algebra, subdirectly irreducible algebra, Priestley duality.}

\maketitle

\thispagestyle{empty}

\section{Introduction} \label{SA}

A {\it pseudocomplemented de Morgan algebra} ($pm$-algebra for short) is an algebra $(L;\wedge ,\vee , ^\ast, ^\prime ,0,1)$ of signature $(2,2,1,1,0,0)$ such that $(L;\wedge ,\vee , ^\ast, 0,1)$ is a pseudocomplemented distributive algebra and $(L;\wedge , \vee , ^\prime ,0,1)$ is a de Morgan algebra, that is $(L;\wedge ,\vee ,0,1)$ is a bounded distributive lattice, 
$^\ast$  satisfies, for $x,y\in L$, $x\wedge y=0$ if and only if $y\leq x^{\ast}$, 
whilst $^\prime$ satisfies
$(x\vee y)^\prime = x^\prime \wedge y^\prime $, $(x\wedge y)^\prime = x^\prime \vee y^\prime $, $0^\prime = 1$, $1^\prime  = 0$, and $x^{\prime \prime}  = x$.\\

Since, as shown by Ribenboim \cite{Ri49}, the class of  pseudocomplemented distributive algebras form a variety, so too does the class of pseudocomplemented de Morgan algebras.\\

Pseudocomplemented de Morgan algebras were first investigated in Romanowska \cite{Ro81} and have been considered by several authors since (see, for example, Denecke \cite{De87}, Gait\'{a}n \cite{Ga98}, Guzm\'{a}n  and Squier \cite{GaSq94}, as well as \cite{Sa86}, \cite{Sa87a}, \cite{Sa87b}, \cite{SaVa14}, \cite{Wang17}).  Romanowska's characterisation of finite subdirectly irreducible $pm$-algebras in  \cite{Ro81}, was extended to a characterisation of all subdirectly irreducible $pm$-algebras in  \cite{Sa86} (for those algebras which are {\it non-regular}) and in \cite{Sa87b} (for those algebras which are  {\it regular}). An algebra is {\it regular} if any two congruences on it are equal whenever they have a class in common.  It was in \cite{Sa86} that 
regularity in  $pm$-algebras was first considered.   It is better understood in the context of regular double $p$-algebras where they have been studied extensively (see, for example, Beazer \cite{Be76}, Katri\v{n}\'{a}k \cite{Ka73}, Koubek and Sichler \cite{KoSi90}, Taylor \cite{Ta16}, Varlet \cite{Va72}, as well as \cite{AdSaVc19}).\\

As observed by Romanowska, for any element $a$ in a $pm$-algebra, $a^+ = a^{\prime \ast\prime }$ is its dual pseudocomplement.  That is, every $pm$-algebra $L=(L;\wedge ,\vee ,^\ast, ^\prime ,0,1)$ gives rise to a  distributive double $p$-algebra $L_{dp}=(L;\wedge ,\vee ,^\ast, ^+,0,1)$. The {\it determination} congruence $\Phi$ on a double $p$-algebra $(L;\wedge ,\vee ,^\ast, ^+,0,1)$ is  the congruence given by, for $x,y\in L$
\[  x\equiv y\ \! (\Phi ) \mbox{ iff } x^\ast = y^\ast \mbox{ and } x^+ =y^+. \]
As shown by Varlet \cite{Va72}, a double $p$-algebra is regular if and only if $\Phi = \triangle$,  the equality congruence. The {\it Moisil} congruence $\Psi$ on a  $pm$-algebra  $(L;\wedge ,\vee ,^\ast, ^\prime ,0,1)$ is the congruence given by, for $x,y\in L$,
\[  x\equiv y\ \! (\Psi ) \mbox{ iff } x^\ast = y^\ast \mbox{ and } x^{ \prime \ast} =y^{ \prime \ast} \]
 As shown in  \cite{Sa86},  a  $pm$-algebra $L$ is regular if and only if  $\Psi = \triangle$. Noticing that $x^{ \prime \ast} =y^{ \prime \ast}$ is equivalent to $x^{ \prime \ast \prime} =y^{ \prime \ast \prime}$, we have that $L$ is regular if and only if the distributive double $p$-algebra $L_{dp}$ is regular.
That the class of regular double $p$-algebras is a variety, namely the subvariety of distributive double $p$-algebras determined by the inequality
\[  x\wedge x^+ \leq y\vee y^\ast, \]
was shown by Katri\v{n}\'{a}k \cite{Ka73}.
It follows immediately that the class of regular $pm$-algebras is a variety, namely the subvariety of  $pm$-algebras determined by the inequality
\[  x\wedge x^{ \prime \ast \prime} \leq y\vee y^\ast. \]

Let $(L;\wedge ,\vee ,^\ast, ^\prime ,0,1)$ be a   $pm$-algebra.
For $x\in L$ and $ n<\omega$, define $x^{n(\prime \ast)}$ recursively as follows:  \[x^{0(\prime \ast)} = x \mbox { and } x^{(n+1)(\prime \ast)} = (x^{n(\prime \ast)})^{\prime \ast}.\] \par
 For any element $x$ of a  distributive double $p$-algebra, $x\geq x^{+\ast}$, that is, for any $x\in L$, $x\geq x^{\prime \ast\prime \ast}$.  Further, since for  $x,y\in L$, $(x\vee y)^\ast = x^\ast\wedge y^\ast$, it follows that
$x\wedge x^{\prime \ast}\geq x^{\prime \ast}\wedge x^{\prime \ast\prime \ast} = (x^{\prime }\vee x^{\prime \ast\prime })^\ast = (x\wedge x^{\prime *})^{\prime \ast}$.  In other words, it is always the case that
\[x\wedge x^{\prime \ast}\geq (x\wedge x^{\prime \ast})^{\prime \ast}\geq (x\wedge x^{\prime \ast})^{2(\prime \ast)}\geq \ldots\] As such, $L$ is said to have {\it finite range} providing, for each $x\in L$, there exists $n<\omega$ such that $(x\wedge x^{\prime *})^{n(\prime \ast)} = (x\wedge x^{\prime \ast})^{(n+1)(\prime \ast)}$.
Further, if, for some
$ n<\omega$, $(x\wedge x^{\prime \ast})^{n(\prime \ast)} = (x\wedge x^{\prime \ast})^{(n+1)(\prime \ast)}$ for every $x\in L$, then $L$ is said to {\it be of range} $n$. Obviously, if $L$ is of range $n$, it is also of range $m$, for any $m\geq n$.

A variety of (regular) $pm$-algebras is said to be of \emph{range n} if all its members are of range $n$. For a variety $\mathbf V$, $L_V (\mathbf V)$ denotes the lattice of all subvarieties of $\mathbf V$.

Although it appears to be somewhat ad hoc, the notion of finite range occurs naturally in the study of $pm$-algebras.  For example, in \cite{Sa86} it is shown that for $L$ to be regular with finite range is equivalent to the compact elements  forming a Boolean sublattice of the congruence lattice of $L$. Also from \cite{Sa86}, for each $n<\omega$, the variety of regular $pm$-algebras of range $n$ is a discriminator variety and so, in particular, it has equationally definable principal congruences, every subdirectly irreducible algebra is simple, and any subalgebra of a simple algebra is simple. 
In  \cite{Sa87a}, it is shown that $L$ has finite range if and only if every principal congruence on $L$ is a join of finitely many principal lattice congruences on $L$.\\

 Further, in \cite{Sa87a}, the variety $\mathbf{PM}_0$ (there denoted by $V_0$) of the $pm$-algebras of range $0$ is considered and its subdirectly irreducible algebras are described in order to determine the lattice $L_V(\mathbf{PM}_0)$ of its subvarieties. We remark that $L_V(\mathbf{PM}_0)$ is isomorphic to the 11-element lattice ${\mathbf 1}\oplus ({\mathbf 2}\times{\mathbf 5})$ (a correction to the lattice presented in \cite[Corollary 6.7]{Sa87a}, since the subvarieties there denoted by $V(L_3)$ and $V(D_4)$ are actually incomparable). Having determined $L_V(\mathbf{PM}_0)$, it was natural to ask for a description of  $L_V(\mathbf{PM}_1)$, $L_V(\mathbf{PM}_2),\ldots$ where $\mathbf{PM}_n$ denotes the variety of $pm$-algebras of range $n$. That question, raised in \cite{Sa87a}, was 
the primary motivation behind this paper.\\

Let $\mathbf{M}$ denote the variety of regular pseudocomplemented de Morgan algebras and, for $ n<\omega$, let $\mathbf{M}_n\subseteq \mathbf{PM}_n$ denote the subvariety of $\mathbf{M}$ that satisfies $(x\wedge x^{\prime \ast})^{n(\prime \ast)} \approx (x\wedge x^{\prime \ast})^{(n+1)(\prime \ast)}$, that is, $\mathbf{M}_n=\mathbf{M}\cap \mathbf{PM}_n$ is the variety of regular pseudocomplemented de Morgan algebras of range $n$.
In this paper, it will be  shown that, even for the better behaved subvariety $\mathbf{M}_n$ of $\mathbf{PM}_n$, $L_V(\mathbf{M}_n)$ is already quite complex.\\

As shown by Kalman \cite{Ka58}, the lattice of subvarieties  of the variety of de Morgan algebras  is a $4$-element chain whose members are, from the least to the greatest, the trivial variety, the variety of Boolean algebras, the variety of {\it Kleene algebras}, and the variety of de Morgan algebras.  As determined by Kalman,
a {\it Kleene algebra} is a de Morgan algebra that satisfies the inequality
\[ x\wedge x^\prime \leq y\vee y^\prime .    \]
In the present context, let $\mathbf{K}$ denote the variety of regular pseudocomplemented Kleene algebras and, for $ n<\omega$,  let $\mathbf{K}_n$ denote the subvariety of $\mathbf{K}$ that satisfies $(x\wedge x^{\prime \ast})^{n(\prime \ast)} \approx (x\wedge x^{\prime \ast})^{(n+1)(\prime \ast)}$, that is, $\mathbf{K}_n=\mathbf{K}\cap \mathbf{PM}_n$ is the variety of regular pseudocomplemented Kleene algebras of range $n$. Obviously, $\mathbf{K}_n\subseteq \mathbf{M}_n\subseteq \mathbf{PM}_n$.
Typically there is a significant difference between Kleene algebras and de Morgan algebras and, although this is reflected initially in a sharp difference between $L_V(\mathbf{K}_0)$ and $L_V(\mathbf{M}_0)$, as well as between $L_V(\mathbf{K}_1)$ and $L_V(\mathbf{M}_1)$, thereafter this difference is short lived.\\

 A simple inspection of the description of the subdirectly irreducible algebras in the variety $\mathbf{PM}_0$ presented in \cite[Theorem 6.5]{Sa87a} leads to the conclusion that  $L_V(\mathbf{M}_0)$ is isomorphic to the $5$-element lattice ${\mathbf 1}\oplus ({\mathbf 2}\times{\mathbf 2})$ and that
$L_V({\mathbf{K}_0})$ is isomorphic to a $3$-element chain. In Sections \ref{SE} and \ref{SF} we will show that $L_V({\mathbf{K}_1})$ is isomorphic to an $\omega +1$ chain (Theorem \ref{E2}), $|L_V({\mathbf{M}_1})| = 2^\omega$ (Corollary \ref{E4}) and
$|L_V({\mathbf{K}_2})| = 2^\omega$ (Theorem \ref{E8}). Summarising we have the following theorems.

\begin{Theorem} \label{A1}
For the variety ${\mathbf{K}}$, we have

{\rm (1)} $L_V({\mathbf{K}_0})$ is isomorphic to a $3$-element chain,

{\rm (2)}
$L_V({\mathbf{K}_1})$ is isomorphic to an $\omega +1$ chain, 

{\rm (3)}
$|L_V({\mathbf{K}_2})| = 2^\omega$.\hfill{$\Box$}
\end{Theorem}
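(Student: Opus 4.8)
The three parts share a common engine. As recalled in the Introduction, for each $n$ the variety of regular $pm$-algebras of range $n$ is a discriminator variety, and hence so is its subvariety $\mathbf{K}_n$; in particular $\mathbf{K}_n$ is congruence-distributive and its subdirectly irreducible members coincide with its simple members. Consequently $L_V(\mathbf{K}_n)$ is governed entirely by the simple algebras: by Birkhoff's theorem every subvariety is generated by its simple members, and by J\'onsson's Lemma, if $\mathbf{W}=\mathbf{V}(K)$ then every simple member of $\mathbf{W}$ lies in $\mathbf{HS}\,\mathbf{P}_U(K)$. Since a homomorphic image of a simple algebra is trivial or isomorphic to it, the simple members of $\mathbf{V}(K)$ are, up to isomorphism, exactly the nontrivial simple subalgebras of ultraproducts of members of $K$. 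The plan, in each case, is therefore: (a) classify the simple algebras through their Priestley dual spaces, using the characterisation announced in the abstract; (b) determine the pre-order of embeddability among them; and (c) read off $L_V(\mathbf{K}_n)$ from the induced closure operator on sets of simple algebras.

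For (1) I would extract from the description of the subdirectly irreducibles of $\mathbf{PM}_0$ in \cite{Sa87a} those that are regular and Kleene, expecting to find exactly two nontrivial simple algebras — the two-element Boolean algebra and a single three-element Kleene algebra — forming a two-element embedding chain, whose downward-closed subsets give the trivial variety, $\mathbf{V}(\mathbf 2)$, and $\mathbf{K}_0$, a three-element chain. For (2) the decisive extra ingredient is that $\mathbf{M}_1$, and hence $\mathbf{K}_1\subseteq\mathbf{M}_1$, is locally finite, so every simple member of $\mathbf{K}_1$ is finite. Using the explicit dual-space description of these simple algebras, I would show that up to isomorphism they form a countable chain $S_1\hookrightarrow S_2\hookrightarrow\cdots$ any two of whose members are comparable under embeddability. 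Local finiteness then renders the ultraproduct step in J\'onsson's Lemma harmless: an ultraproduct of algebras drawn from a finite subset of the chain is isomorphic to one of them, so the simple members of $\mathbf{V}(S_n)$ are precisely $S_1,\dots,S_n$. Hence $\mathbf{V}(S_1)\subsetneq\mathbf{V}(S_2)\subsetneq\cdots$ strictly, since $S_{n+1}$ does not embed into $S_n$; and as the only downward-closed subsets of an $\omega$-chain are its finite initial segments and the whole chain, the subvarieties are exactly the trivial variety, the $\mathbf{V}(S_n)$, and their join $\mathbf{K}_1=\mathbf{V}(\{S_n:n<\omega\})$, which strictly contains each $\mathbf{V}(S_n)$. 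This is an $\omega+1$ chain.

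For (3) the point is that range $2$ destroys the chain structure: $\mathbf{K}_2$ is no longer locally finite, and admits an \emph{antichain} of simple algebras. I would construct, via Priestley duality, a countable family $\{A_i:i<\omega\}$ of finite simple algebras in $\mathbf{K}_2$ together with identities $e_i$ chosen so that $A_j$ satisfies $e_i$ if and only if $j\neq i$. Given this, the map $J\mapsto\mathbf{V}(\{A_i:i\in J\})$ from $\mathcal P(\omega)$ to $L_V(\mathbf{K}_2)$ is injective: if $k\in J\setminus J'$ then the left-hand variety contains $A_k$ and so fails $e_k$, whereas the right-hand variety is generated by algebras each satisfying $e_k$ and hence satisfies $e_k$. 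This yields $|L_V(\mathbf{K}_2)|\geq 2^\omega$, and the reverse inequality is automatic, since in a finite signature every variety is determined by its (countable) equational theory, leaving at most $2^\omega$ varieties in all.

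The routine parts are the two inequalities bounding the cardinality in (3) and the bookkeeping of downward-closed sets in (1) and (2). The real work, and the main obstacle, is geometric: producing the required dual spaces. In (2) one must verify that the dual spaces of the simple algebras are linearly ordered by the relevant morphisms, so that the algebras themselves form a chain; in (3) one must design dual spaces of increasing complexity that are pairwise incomparable and carry the separating equations $e_i$, while simultaneously ensuring that each $A_i$ is simple, Kleene, regular, and of range exactly $2$. I expect the construction in (3) to be the hardest step, since it must exploit the additional room afforded by range $2$ to manufacture an infinite independent family where range $1$ permits only a chain.
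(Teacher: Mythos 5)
Your overall architecture --- discriminator variety, so subdirectly irreducible $=$ simple; Birkhoff plus J\'onsson reduce $L_V(\mathbf{K}_n)$ to the embeddability order among simple algebras; Priestley duality to classify those --- is exactly the paper's, and your part (1) coincides with the paper's treatment (Theorem \ref{E1}, which extracts the regular Kleene algebras from Sankappanavar's list). But there are two genuine defects. In (2), your step ``$\mathbf{K}_1$ is locally finite, so every simple member of $\mathbf{K}_1$ is finite'' is false: local finiteness constrains finitely generated algebras, not simple ones, and $\mathbf{K}_1$ contains infinite simple algebras, namely $E(Q_6(\emptyset,S))$ for any infinite Stone space $S$ (Corollary \ref{D2A} and the remark following it). Hence the simple members of $\mathbf{K}_1$ do \emph{not} form a countable chain. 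What local finiteness actually yields, and what the paper uses in Theorem \ref{E2}, is that every subvariety is generated by its \emph{finite} simple members, combined with Theorem \ref{E0} (if $\mathcal{K}$ is a set of simple algebras with $\mathbf{V}(\mathcal{K})$ locally finite, every finite simple algebra of $\mathbf{V}(\mathcal{K})$ lies in $\mathbf{IS}(\mathcal{K})$). The finite simple members of $\mathbf{K}_1$ --- $L_0$, $L_2$, $L_5$, and $L_6(0,n)$ for $3\le n<\omega$ --- do form a chain under embeddability, and with this correction your argument for (2) goes through essentially as in the paper; as written it does not.

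In (3) the gap is the proof itself. Your reduction ``given algebras $A_i$ and identities $e_i$ with $A_j\models e_i$ iff $j\neq i$, the map $J\mapsto \mathbf{V}(\{A_i : i\in J\})$ is injective'' is correct, but producing such identities is exactly equivalent (by Birkhoff) to proving $A_i\notin\mathbf{V}(\{A_j : j\neq i\})$, which is the entire difficulty; you give no construction and no mechanism for obtaining them. Pairwise non-embeddability of the dual spaces --- what a geometric antichain construction naturally provides --- is not by itself enough: with an infinite generating set, J\'onsson's lemma leaves subalgebras of uncontrolled ultraproducts, which could a priori produce the excluded $A_i$. The paper closes precisely this hole by a different mechanism: it constructs the algebras $K_n$ (Example \ref{E7}), shows they are simple members of $\mathbf{K}_2$ and pairwise non-embeddable (Lemma \ref{E8A}), and then proves the key, non-obvious fact that $\mathbf{V}(\{K_n\colon 2\le n<\omega\})$ is \emph{locally finite} (Lemma \ref{E9}) --- even though $\mathbf{K}_2$ itself is not --- so that Theorem \ref{E0} applies and distinct subsets of $\{K_n\}$ generate distinct varieties (Lemma \ref{E10}). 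Your proposal neither supplies the separating identities nor anticipates any substitute for this local-finiteness step (indeed, your emphasis that $\mathbf{K}_2$ is not locally finite suggests the opposite expectation), so part (3) remains essentially unproved.
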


\begin{Theorem} \label{A2}
For the variety ${\mathbf{M}}$, we have

{\rm (1)} $L_V({\mathbf{M}_0})$ is isomorphic to the $5$-element lattice ${\mathbf 1}\oplus ({\mathbf 2}\times{\mathbf 2})$,

{\rm (2)} $|L_V({\mathbf{M}_1})| = 2^\omega$.\hfill{$\Box$}
\end{Theorem}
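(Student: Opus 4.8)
The plan is to treat the two parts separately, the first being a finite computation and the second the substantive cardinality estimate. For part (1), I would use that $\mathbf{M}_0 = \mathbf{M} \cap \mathbf{PM}_0$, being the variety of regular $pm$-algebras of range $0$, is a discriminator variety, so that its subdirectly irreducible members coincide with its simple members and every algebra is a subdirect product of simple ones. Starting from the list of subdirectly irreducible algebras of $\mathbf{PM}_0$ in \cite[Theorem 6.5]{Sa87a}, I would select exactly those that are regular; these are finitely many finite algebras. Since the variety is lattice-based and hence congruence distributive, J\'onsson's lemma lets me read off, for each such simple algebra $A$, the simple members of the variety $V(A)$ generated by $A$ from $HS(A)$, and hence compute the inclusion order among the finitely many varieties $V(A)$. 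Identifying which generated varieties coincide and which are comparable yields the finite lattice, and matching it against ${\mathbf 1}\oplus({\mathbf 2}\times{\mathbf 2})$ completes (1).

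For part (2), the goal is to produce $2^\omega$ distinct subvarieties. The standard route is to exhibit a countably infinite family $\{A_i : i<\omega\}$ of finite \emph{simple} algebras in $\mathbf{M}_1$ that is \emph{independent}, in the sense that $A_k \notin V(\{A_i : i\neq k\})$ for every $k$. Because $\mathbf{M}$ is congruence distributive, J\'onsson's lemma applies: for any $S\subseteq\omega$, every subdirectly irreducible (equivalently, by the discriminator property, every simple) member of $V(\{A_i : i\in S\})$ lies in $HSP_U(\{A_i : i\in S\})$. If the family is designed so that $A_k \in HSP_U(\{A_i : i\in S\})$ forces $k\in S$, then $A_k\in V(\{A_i : i\in S\})$ if and only if $k\in S$. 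Consequently the assignment $S \mapsto V(\{A_i : i\in S\})$ is injective, giving an embedding of the power set $\mathcal{P}(\omega)$ into $L_V(\mathbf{M}_1)$ and hence $|L_V(\mathbf{M}_1)| \geq 2^\omega$. The reverse inequality is automatic since the signature is finite, so there are at most $2^\omega$ varieties; equality follows.

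The hard part is the construction of this family and the verification of the J\'onsson-style independence. Here I would exploit the characterisation of the dual spaces of the simple algebras of $\mathbf{M}_n$, together with the explicit description for $\mathbf{M}_1$ obtained earlier via Priestley duality: each $A_i$ is specified by a finite ordered space carrying the structure dual to $^\ast$ and $^\prime$. The spaces must be chosen with structural invariants (for instance, the size of a distinguished chain or antichain, or a characteristic sub-configuration) that simultaneously block cross-embeddings $A_k \hookrightarrow A_i$ for $k\neq i$, prevent $A_k$ from being a quotient of a subalgebra of a finite product of the $A_i$ with $i\neq k$, and survive passage to ultraproducts. A crucial structural point is that the family must consist of genuinely \emph{non-Kleene} algebras: since $L_V(\mathbf{K}_1)$ is only an $\omega+1$ chain, the jump to the continuum becomes possible only once de Morgan negations violating the Kleene inequality $x\wedge x' \leq y\vee y'$ are admitted, so the designed dual spaces must encode such negations. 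Once the independent family is in hand the cardinality computation is immediate, and combining it with part (1) establishes the theorem.
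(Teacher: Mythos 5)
Your part (1) is essentially the paper's own argument: inspect the list of subdirectly irreducible algebras of $\mathbf{PM}_0$ in \cite[Theorem 6.5]{Sa87a}, keep the three regular ones (those dual to $Q_0$, $Q_1$, $Q_2$), and read off the lattice of subvarieties from the inclusion order among the varieties they generate. Nothing to object to there.

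For part (2) your architecture coincides with the paper's: an $\omega$-indexed family of finite simple algebras in $\mathbf{M}_1$, no member of which lies in the variety generated by the others, yields an embedding of the power set of $\omega$ into $L_V(\mathbf{M}_1)$, and the upper bound $2^\omega$ is automatic from the finite signature; your remark that the family must violate the Kleene inequality is also correct (the paper's witnesses are non-Kleene). But the proposal stops exactly where the mathematical work begins: you never exhibit the family and never verify the independence --- you explicitly defer "the hard part", listing only the properties such a family \emph{must} have. In the paper this hard part occupies most of Sections \ref{SD} and \ref{SE}: the family is $L_6(n,n)$, $3\leq n<\omega$, dual to the spaces $Q_6(I,S)$ with $I=S$ and $|S|=n$ (every minimal point $x$ satisfies $x\not<\zeta(x)$); the independence rests on a complete combinatorial analysis of surjective $pm$-morphisms between $Q_6$-spaces (Lemma \ref{E3A}, Lemma \ref{E3B}, Theorem \ref{E3}), whose upshot is that $L_6(p,p)\in \mathbf{IS}(L_6(m,n))$ if and only if $p=m=n$ (Corollary \ref{E4}); and the bridge from "not in $\mathbf{IS}$ of the generators" to "not in the generated variety" is Theorem \ref{E0}, which requires the local finiteness of $\mathbf{M}_1$ (Theorem \ref{D3}, itself a substantial argument) combined with the discriminator fact that $\mathbf{HS}=\mathbf{IS}$ on classes of simple algebras, via \cite[Theorem 4.104]{McMcTa87}. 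None of this content appears in your proposal, so as it stands it is a proof plan rather than a proof.

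One point in your favour: your ultraproduct variant of the bridge does work, and would even let you bypass local finiteness for this particular count. In a discriminator variety simplicity is expressible by a first-order sentence (the discriminator term acts as the discriminator function on a nontrivial algebra), so ultraproducts and subalgebras of the chosen simple algebras are again simple and homomorphic images are trivial or isomorphic; hence J\'{o}nsson's lemma places any finite simple member of $\mathbf{V}(\{A_i\colon i\in S\})$ in $\mathbf{I}\mathbf{S}\mathbf{P}_U(\{A_i\colon i\in S\})$, and \L o\'{s}'s theorem (the embeddability of a fixed finite algebra of finite type being a single sentence) pulls it back into $\mathbf{IS}(A_i)$ for a single $i\in S$. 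But even on that route the essential step remains exactly what you have not done: choose the $A_i$ concretely and prove that $A_k\in\mathbf{IS}(A_i)$ forces $k=i$, which is precisely the content of Theorem \ref{E3}.
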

   
Local finiteness in the varieties $\mathbf{M}_n$ and $\mathbf{K}_n$, $1\leq n<\omega$, is also analysed in Theorems
 \ref{D3} and \ref{D6} allowing us to establish the following 
result.

\begin{Theorem} \label{A3} 
For the varieties ${\mathbf{K}}$ and ${\mathbf{M}}$,

{\rm (1)}  $\mathbf{M}_1$ is locally finite {\rm (}de facto, so too is $\mathbf{K}_1${\rm )}, 

{\rm (2)} ${\mathbf{K}_2}$ is not locally finite, in fact the free algebra on one generator  $F_{\mathbf{K}_2}(1)$ is infinite
 {\rm (}de facto, for $n\geq 2$,  ${\mathbf{K}_n}$ and ${\mathbf{M}_n}$  are not locally finite and the respective free algebras on one generator are infinite{\rm )}. \hfill{$\Box$}
\end{Theorem}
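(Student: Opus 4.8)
The plan is to reduce both assertions to their two border cases and treat those. Since $\mathbf{K}_1\subseteq\mathbf{M}_1$, and since a subvariety of a locally finite variety is again locally finite, part~(1) follows as soon as $\mathbf{M}_1$ itself is known to be locally finite. For part~(2), a range-$2$ algebra is of range $n$ for every $n\geq 2$, so $\mathbf{K}_2\subseteq\mathbf{K}_n\subseteq\mathbf{M}_n$; as the free algebra of a subvariety is a quotient of the free algebra (on the same generators) of any larger variety, and a quotient of a finite algebra is finite, it suffices to prove the single statement that $F_{\mathbf{K}_2}(1)$ is infinite. Thus everything comes down to (1) $\mathbf{M}_1$ is locally finite, and (2) $F_{\mathbf{K}_2}(1)$ is infinite.

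For (1), fix $k<\omega$; I must show $F_{\mathbf{M}_1}(k)$ is finite. By Birkhoff's subdirect representation theorem, $F_{\mathbf{M}_1}(k)$ is a subdirect product of subdirectly irreducible algebras, each of which is a homomorphic image of $F_{\mathbf{M}_1}(k)$ and hence generated by at most $k$ elements; moreover, since $\mathbf{M}_1$ is a discriminator variety, these subdirectly irreducibles are exactly the simple members of $\mathbf{M}_1$. The heart of the matter is a uniform bound: there is $f(k)<\omega$ such that every at-most-$k$-generated simple algebra $\mathbf{Q}\in\mathbf{M}_1$ satisfies $|\mathbf{Q}|\leq f(k)$. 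Granting this, there are only finitely many such $\mathbf{Q}$ up to isomorphism, and each admits only finitely many $k$-tuples of elements; grouping the subdirect factors by isomorphism type then exhibits $F_{\mathbf{M}_1}(k)$ as a subalgebra of a finite product, whence $F_{\mathbf{M}_1}(k)$ is finite.

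To obtain the bound I would combine regularity with the range-$1$ hypothesis. Regularity (triviality of the Moisil congruence $\Psi$) says that $x\mapsto(x^{\ast},x^{\prime\ast})$ is injective, with both coordinates lying in the Boolean skeleton $S=\{a^{\ast}:a\in\mathbf{Q}\}$; hence $|\mathbf{Q}|\leq|S|^{2}$ and it is enough to bound $|S|$ in terms of $k$. The function of the range-$1$ identity is to truncate the otherwise unbounded alternation of $^{\ast}$ and $^{\prime}$: it forces the descending $^{\prime\ast}$-orbits to stabilise after a single step, so that only boundedly many distinct elements of the form $t^{\ast}$ can arise from $k$ generators, and $S$ becomes a boundedly generated (hence finite) Boolean algebra. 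Equivalently, the bound can be read directly off the explicit description of the dual spaces of the simple algebras of $\mathbf{M}_1$ established earlier: a simple algebra generated by $k$ elements has a dual space whose size is bounded in terms of $k$. Establishing this truncation --- controlling the pseudocomplements of meets and showing that they cannot escape a $k$-dependent finite set --- is the main obstacle, and it is precisely here that range $1$, as opposed to range $2$, is indispensable.

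For (2), it suffices to exhibit inside $\mathbf{K}_2$ a single infinite one-generated algebra $\mathbf{A}$ (or, equally well, a family of one-generated algebras of unbounded finite size): since $F_{\mathbf{K}_2}(1)$ maps onto $\mathbf{A}$ by sending the free generator to a generator of $\mathbf{A}$, and no finite algebra maps onto an infinite one, $F_{\mathbf{K}_2}(1)$ must be infinite. I would construct such $\mathbf{A}$ on the dual side, using the Priestley duality for $pm$-algebras: one builds ordered spaces carrying the extra unary structure dual to $^{\ast}$ and $^{\prime}$ whose dual algebras are one-generated, satisfy the Kleene inequality and regularity, are of range $2$, and yet grow without bound. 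The mechanism available in range $2$ but blocked in range $1$ is that the pseudocomplements of the nested meets built from a single generator $g$ together with its de Morgan dual $g^{\prime}$ no longer stabilise, so that iterating a fixed unary polynomial produces pairwise distinct one-variable terms of unbounded number. The main obstacle here is the verification that the constructed algebras genuinely lie in $\mathbf{K}_2$ --- that they are regular and of range exactly $2$ while remaining one-generated and arbitrarily large --- that is, checking the regularity inequality and the range-$2$ identity on the explicit models. Once $F_{\mathbf{K}_2}(1)$ is infinite, the inclusions $\mathbf{K}_2\subseteq\mathbf{K}_n\subseteq\mathbf{M}_n$ for $n\geq 2$ yield at once that $\mathbf{K}_n$ and $\mathbf{M}_n$ are not locally finite and that their one-generated free algebras are infinite.
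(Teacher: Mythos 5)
Your outer reductions are all correct and are exactly the ones the paper makes: (1) for $\mathbf{K}_1$ follows from (1) for $\mathbf{M}_1$ because subvarieties of locally finite varieties are locally finite; part (2) reduces to the infiniteness of $F_{\mathbf{K}_2}(1)$ via $\mathbf{K}_2\subseteq\mathbf{K}_n\subseteq\mathbf{M}_n$ and the fact that the free algebra of a subvariety is a quotient of the free algebra of the larger variety (this is the paper's Corollary \ref{D7}). Your further reduction of (1) to a uniform bound $f(k)$ on the cardinality of $k$-generated simple algebras of $\mathbf{M}_1$ is also sound, and is essentially equivalent to the paper's route (by Corollary \ref{D2A} every simple algebra of $\mathbf{M}_1$ is of the form $E(Q_i)$ or $E(Q_6(I,S))$, so your bound is the paper's Mal'cev-style bound on $k$-generated subalgebras of the generating simple algebras). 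The problem is that the two statements carrying all the mathematical weight are precisely the ones you flag as ``the main obstacle'' and leave unproved, so what you have is a correct strategy, not a proof. For (1), your skeleton heuristic does not close the gap: regularity does give $|Q|\leq|S(Q)|^2$, but $S(Q)$ consists of all elements $t^{\ast}$ with $t$ an arbitrary term in the $k$ generators, and since $(a\wedge b)^{\ast}$ is not a function of $a^{\ast},a^{\prime\ast},b^{\ast},b^{\prime\ast}$, the range-$1$ identity yields no evident finite generating set for $S(Q)$; the ``truncation of the alternation'' is the thing to be proven, not an argument. The paper instead works dually: it verifies by explicit case analysis (Theorem \ref{D3}) that inside each $E(Q_6(I,S))$ the family $K(\mathcal{F})=\mathcal{F}\cup\{\zeta(X)\cup S\colon X\in\mathcal{F}\}\cup\{(\zeta(x)]\colon \{x\}\in\mathcal{F}_I\}$, for $\mathcal{F}$ a Boolean subalgebra of the clopen algebra of $S$, is a subalgebra, and that any $N$ given elements lie in such a $K(\mathcal{F})$ of size at most $3\cdot 2^{2^{2N}}$; nothing in your outline substitutes for this computation.

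For (2) the gap is more basic: no algebra is actually constructed. You correctly observe that it suffices to exhibit one-generated algebras of unbounded finite size in $\mathbf{K}_2$ and that they should be produced on the dual side, but you never specify the spaces or the generator. The paper's Example \ref{F1} does: $P_n$ is built on disjoint $n$-element sets $S_n=\{x_i\}$ and $T_n=\{y_i\}$ with $x_i<y_j$ iff $i\notin\{j-1,j+1\}$ and $\zeta$ exchanging $x_i$ and $y_i$. Moreover, you misplace the difficulty: checking that $K_n=E(P_n)$ is regular, Kleene and of range $2$ is the routine part, whereas the real content of Theorem \ref{D6} is the induction showing that the single element $\{x_0\}$ generates all the sets $\{x_i\}$ --- via $\{x_i\}^{\ast\prime\ast}=\{x_{i-1},x_{i+1}\}$ and $\{x_i\}^{\ast\prime\ast}\cap\{x_{i-1}\}^{\ast}=\{x_{i+1}\}$ --- so that $K_n$ contains a one-generated subalgebra with at least $n$ elements. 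Without an explicit family of spaces and a verified single generator, part (2) of your proposal remains a plan rather than a proof.
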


Even though $L_V({\mathbf{M}_1})$ is uncountable, local finiteness of $\mathbf{M}_1$ allows us to have a genuine understanding of the structure of $L_V({\mathbf{M}_1})$, the key to which is given in Theorem \ref{E3}. The discussion following Corollary \ref{E4} contains the details.  At the cost of some transparency, that discussion is summarised in Theorem \ref{E6}.\\

To establish the aforementioned results,  Priestley duality as it applies to pseudocomplemented de Morgan algebras is used extensively and a brief outline of this duality is given in Section \ref{SB}. In Section \ref{SC} we characterise, via their dual spaces, the regular $pm$-algebras of range $n$. 
It will be seen that the apparently ad hoc nature of finite range is not ad hoc at all and, in fact, is a natural property of Priestley duality, namely the notion of $\zeta$-distance, as seen in Section \ref{SC}.
A characterisation, via dual spaces, of the simple (equivalently, subdirectly irreducible) algebras in $\mathbf{M}_n$ and a description of the dual spaces of the simple algebras in $\mathbf{M}_1$ and $\mathbf{K}_1$ are given in Section \ref{SD}. It is also in this section that we prove Theorem \ref{A3}. The contents of Sections \ref{SE} and \ref{SF} have already been described above. Finally, in Section \ref{SG} we present some remarks on connections between regular $pm$-algebras and de Morgan Heyting algebras.\\

\section{Preliminaries}\label{SB}

We refer to \cite{BuSa81} and to \cite{BaDw74} for basic concepts and results on universal algebra and on distributive $p$-algebras, respectively. We point out that, for us, subdirectly irreducible algebras and simple algebras are non-trivial.\\

We will use Priestley duality as it applies to pseudocomplemented de Morgan algebras. Here we give a brief outline.\\

Let $(P;\leq)$ be a partially ordered set. We denote by $\mathrm{Min}(P)$ and $\mathrm{Max}(P)$  the sets of minimal elements of $P$ and of maximal elements of $P$, respectively. 
For any $X\subseteq P$, let $(X]=\{y\in P\colon y\leq x \mbox{ for some } x\in X\}$, and $[X)=\{y\in P\colon y\geq x \mbox{ for some } x\in X\}$. The subset $X$ is said to be {\it decreasing} if $(X]=X$, and {\it increasing} if $[X)=X$. Define also $\mathrm{Min}(X)=\mathrm{Min}(P)\cap (X]$ and $\mathrm{Max}(X)=\mathrm{Max}(P)\cap [X)$. Should $X=\{x\}$, the sets $(X]$, $[X)$, $\mathrm{Min}(X) $ and $\mathrm{Max}(X)$ will be denoted by $(x]$, $[x)$, $\mathrm{Min}(x) $ and $\mathrm{Max}(x)$, respectively. \\

An ordered topological space  $P=(P;\tau ,\leq )$ is a {\it Priestley space} if it is a compact totally order-disconnected space, that is $(P;\tau )$ is a compact topological space, $(P;\leq )$ is a partially ordered set, and, for $x,y\in P$, if $y\not\leq x$, then there exists a clopen decreasing set $X$ such that $x\in X$ and $y\not\in X$.  In \cite{Pr70}, Priestley showed that the category of all Priestley spaces together with all continuous order-preserving maps (that is, for a continuous map $\varphi \colon P\to Q$, if $x\leq y$, then $\varphi (x)\leq \varphi (y)$) is dually equivalent to the category of all bounded distributive lattices together with all  $\{0,1\}$-lattice homomorphisms (see also \cite{Pr84}).\\

Priestley \cite{Pr75} extended her duality to the category of distributive pseudocomplemented lattices by showing that the category of distributive pseudocomplemented lattices together with all lattice homomorphisms that preserve $^\ast$ is dually equivalent to the category of all {\it $p$-spaces} together with all {\it $p$-morphisms}.  A {\it $p$-space} is a Priestley space $P=(P;\tau ,\leq )$ such that, whenever $X\subseteq P$ is a clopen decreasing set, then $[X)$ is clopen. Given $p$-spaces $P$ and $Q$, a map $\varphi \colon P\to Q$ is a {\it $p$-morphism} if it is continuous order-preserving and, for any $x\in P$, ${\rm Min} (\varphi (x))\subseteq \varphi ({\rm Min}(x))$, which implies that ${\rm Min} (\varphi (x))= \varphi ({\rm Min}(x))$.\\

Meanwhile, Cornish and Fowler \cite{CoFo77} extended Priestley's duality to the category of de Morgan algebras by showing that the category of de Morgan algebras together with all lattice homomorphisms that preserve $^\prime$ is dually equivalent to the category of all {\it $m$-spaces} together with all {\it $m$-morphisms}.  An {\it $m$-space} is a quadruple $P=(P;\tau ,\leq, \zeta )$ such that $(P;\tau,\leq)$ is a Priestley space and $\zeta$ is a continuous order-reversing map from $P$ to $P$ satisfying $\zeta^2(x) = x$ for every $x\in P$. Given $m$-spaces $P$ and $Q$, a map $\varphi \colon P\to Q$ is an {\it $m$-morphism} if it is continuous order-preserving and $\varphi \circ\zeta = \zeta\circ \varphi $. \\

To summarise: $P=(P;\tau, \leq,\zeta)$ is a {\it $pm$-space} if $(P;\tau, \leq)$ is a Priestley space, $[X)$ is clopen for every clopen decreasing set $X\subseteq P$, $\zeta$ is a continuous order-reversing involution on $P$; for $pm$-spaces $P$ and $Q$, $\varphi \colon P\to Q$ is a {\it $pm$-morphism} if $\varphi$ is continuous, order-preserving, ${\rm Min} (\varphi (x))\subseteq \varphi ({\rm Min}(x))$ for every $x\in P$, and $\varphi \circ\zeta = \zeta\circ \varphi $.  The category of all pseudocomplemented de Morgan algebras together with all homomorphisms is dually equivalent to the category of all  $pm$-spaces together with all  $pm$-morphisms. In particular, for a $pm$-algebra $(L;\wedge ,\vee ,^\ast, ^\prime ,0,1)$, its dual space is the $pm$-space $D(L)=(P;\tau ,\leq, \zeta )$ where $P$ is the set of prime ideals of $L$ ordered by inclusion, the topology $\tau$ has sub-basis $\{ X_a\colon a\in L\}\cup \{ P\setminus X_a\colon a\in L\}$ where $X_a = \{ I\in P\colon a\not\in I\}$ and $\zeta$ is defined by $\zeta (I) = \{ a\in L\colon a^\prime\not\in I\}$ for every $I\in P$.
For a $pm$-space $(P;\tau ,\leq, \zeta )$, its dual $pm$-algebra is the algebra $E(P)=(L;\cap ,\cup ,^\ast, ^\prime ,\emptyset ,P)$ where $L$ consists of all clopen decreasing subsets of $P$ and, for a clopen decreasing $X\subseteq P$, $X^\ast = P\setminus [X)$ and $X^\prime = P\setminus \zeta^{-1}(X) = P\setminus \zeta (X) = \zeta (P\setminus X)$.\\

For bounded distributive lattices $L$ and $K$ whose dual  Priestley spaces are $P$ and $Q$, respectively, each homomorphism $f\colon L\to K$ is associated with a continuous order-preserving map $\varphi\colon Q\to P$ which is defined by $\varphi(J)=f^{-1}(J)$, for any prime ideal $J$ of $K$.  Under Priestley duality, $f$ is one-to-one if and only if $\varphi$ is onto and $f$ is onto if and only if $\varphi$ is an order-embedding.  From the latter, it follows that homomorphic images of $L$ correspond to closed subspaces of $P$ (ordered by the restriction of the order in $P$).  For $pm$-spaces, we know that the $pm$-morphisms are 
continuous order-preserving maps $\varphi \colon Q\to P$  such that, for every $x\in Q$,  ${\rm Min} (\varphi (x))\subseteq \varphi ({\rm Min}(x))$  and $\varphi \circ\zeta = \zeta\circ \varphi $.  Hence, it follows that the closed subspaces of $P$, for which $\mathrm{Min}(x)$ is a subset whenever $x$ is a member and are also closed under $\zeta$, correspond to the homomorphic images of $L$.  We refer to all such subspaces as {\it $pm$-subspaces} of $P$.\\

As observed by Cornish and Fowler \cite{CoFo79}, an $m$-space $(P;\tau ,\leq, \zeta )$ corresponds to a Kleene algebra precisely when either $x\leq \zeta (x)$ or $\zeta (x)\leq x$  for every $x\in P$.
In particular, the category of all pseudocomplemented Kleene  algebras together with all homomorphisms is dually equivalent to the category of all $pm$-spaces  $(P;\tau ,\leq, \zeta )$ such that  either  $x\leq \zeta (x)$ or $\zeta (x)\leq x$ for every $x\in P$, together with all $pm$-morphisms.\\

If a $pm$-morphism is a homeomorphism and an order-isomorphism, we say that it is a {\it $pm$-isomorphism} and that the $pm$-spaces involved are {\it $pm$-isomorphic}.\\
 
Notice that if $(P;\tau ,\leq, \zeta )$ is a $pm$-space then $\zeta$ is a homeomorphism and a dual order-isomorphism from $P$ onto $P$.\\

It is well known that finite subsets of a Hausdorff space are closed and, consequently, an element $x$ is an isolated point if and only if $\{x\}$ is clopen. Recall also that a compact Hausdorff space is finite if and only if the topology is the discrete one. 
{\it Stone spaces} (that is, compact totally disconnected  topological spaces) and Priestley spaces are, in particular, compact and Hausdorff.\\

Given a closed subset $X$ of a Priestley space, $[X)$ and $(X]$ are closed and if $X$ is decreasing and $y\notin X$ there exists a clopen increasing set $Y$ such that $y\in Y$ and $Y\cap X=\emptyset$.\\

If $(P;\tau,\leq,\zeta)$ is the $pm$-space of a pseudocomplemented de Morgan algebra $L$, then $(P;\tau,\leq)$ is the Priestley space of the distributive double $p$-algebra $L_{dp}$. Consequently, ${\rm Min}(P)$ and ${\rm Max} (P)$ are closed, since the set of minimal elements in the Priestley space of any distributive pseudocomplemented lattice is closed \cite{Pr75} and dually the set of maximal elements in the Priestley space of any double $p$-algebra is also closed. \\

Combining an observation of Cornish and Fowler \cite{CoFo79} with a result due to Adams (cf. \cite{Pr75}), we know that given a pseudocomplemented de Morgan algebra $L$ whose $pm$-space is $P=(P;\tau,\leq,\zeta)$, there is a lattice-isomorphism from the lattice of congruences of $L$ onto the lattice of open sets $X$ of $P$ that satisfy $[{\rm Min}(P)\cap X)\subseteq X$ and $\zeta(X)\subseteq X$ (thus $\zeta(X)=X$). These open sets also satisfy $({\rm Max}(P)\cap X]\subseteq X$, since $({\rm Max}(P)\cap X]=\zeta([{\rm Min}(P)\cap X))\subseteq \zeta (X)=X$ (that $({\rm Max}(P)\cap X]\subseteq X$ also follows from the fact that any congruence on $L$ is also a congruence on $L_{dp}$).

\section{Regularity and finite range}\label{SC}

In this section, we characterise, via their dual spaces, the regular pseudocomplemented de Morgan algebras and which amongst  these algebras have finite range $n$.
Although at first flush the notion of finite range may have seemed a little artificial, as Corollary \ref{C4B}  shows, in the context of pseudocomplemented de Morgan algebras, it really does arise naturally.\\

\begin{Theorem}\label{C1} Let $L=(L;\wedge ,\vee ,^\ast, ^\prime ,0,1)$ be a $pm$-algebra and  $(P;\tau, \leq, \zeta)$ its dual  $pm$-space. We have that 
$L$ is regular if and only if $(P;\leq )$ has height at most $1$, that is, all chains in $(P;\leq )$ have at most $2$ elements.
\end{Theorem}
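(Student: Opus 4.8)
The plan is to pass from the de Morgan setting to the associated distributive double $p$-algebra $L_{dp}$ and to use the criteria already recorded in the excerpt: $L$ is regular iff $L_{dp}$ is regular iff the determination congruence $\Phi$ is trivial, i.e. for all $a,b\in L$, $a^\ast=b^\ast$ and $a^+=b^+$ force $a=b$. Translating through the duality, an element $a$ corresponds to a clopen decreasing set $X\subseteq P$, and I would first record the dual descriptions $X^\ast=P\setminus[X)$ and, via a short computation with $\zeta$ (using $\zeta([Z))=(\zeta(Z)]$ and $a^+=a^{\prime\ast\prime}$), $X^+=(P\setminus X]$. Writing $X^\sharp=\{z\in P:[z)\subseteq X\}$, one has $(P\setminus X]=P\setminus X^\sharp$, so the regularity criterion becomes the purely order-topological statement: whenever $X,Y$ are clopen decreasing with $[X)=[Y)$ and $X^\sharp=Y^\sharp$, then $X=Y$. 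The whole proof reduces to showing that this holds precisely when $(P;\le)$ has height at most $1$.

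For the implication height $\le 1\Rightarrow$ regular, the key observation is that if no chain has three elements then every point of $P$ is either minimal or maximal. Given clopen decreasing $X,Y$ with $[X)=[Y)$ and $X^\sharp=Y^\sharp$, I would take $p\in X$ and split into cases: if $p$ is minimal, then $p\in[X)=[Y)$ forces some $q\in Y$ with $q\le p$, whence $q=p\in Y$; if $p$ is maximal, then $[p)=\{p\}\subseteq X$ gives $p\in X^\sharp=Y^\sharp$, so $\{p\}=[p)\subseteq Y$ and again $p\in Y$. By symmetry $X=Y$, so $L$ is regular.

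The substantial direction is the converse: from a three-element chain $x_0<x_1<x_2$ I must manufacture distinct clopen decreasing $X\ne Y$ with $[X)=[Y)$ and $X^\sharp=Y^\sharp$. The main obstacle is topological: the naive idea of deleting the middle point $x_1$ from a suitable $X$ need not keep the set clopen and decreasing, and I must preserve \emph{both} determination quantities globally, not just at $x_0,x_1,x_2$. I would neutralise the $X^\sharp$ obstruction at the outset by arranging $X^\sharp=\emptyset$: since $X^\sharp=\emptyset$ iff $X\cap\mathrm{Max}(P)=\emptyset$, and since $(x_1]$ is closed decreasing while $\mathrm{Max}(P)\cup[x_2)$ is closed increasing and disjoint from $(x_1]$ (as $x_1$ is neither maximal nor above $x_2$), the Priestley separation theorem yields a clopen decreasing $X$ with $(x_1]\subseteq X$ and $X\cap(\mathrm{Max}(P)\cup[x_2))=\emptyset$; thus $x_0,x_1\in X$, $x_2\notin X$ and $X^\sharp=\emptyset$.

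To remove $x_1$ without disturbing $[X)$, I would then separate $x_1$ from the closed decreasing set $M=\mathrm{Min}(P)\cap X$ of minimal points of $X$ (note $x_1\notin M$ because $x_0<x_1$): by the standard separation of a point from a disjoint closed decreasing set there is a clopen \emph{increasing} set $S$ with $x_1\in S$ and $S\cap M=\emptyset$. Setting $Y=X\setminus S=X\cap(P\setminus S)$ gives a clopen decreasing set with $x_1\in X\setminus Y$, so $X\ne Y$; moreover $Y\subseteq X$ forces $Y^\sharp\subseteq X^\sharp=\emptyset$, and the condition $S\cap M=\emptyset$ guarantees every point of $[X)$ still lies above a minimal point of $X$ that survives in $Y$, so $[Y)=[X)$. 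Hence $X,Y$ witness $\Phi\ne\triangle$ and $L$ is not regular. The only genuine calculations are the dual formula $X^+=(P\setminus X]$ and the verification that $S\cap M=\emptyset$ preserves the up-closure; everything else is separation in Priestley spaces together with the fact that in a Priestley space every element lies above a minimal one.
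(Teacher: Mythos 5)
Your proof is correct, but it takes a genuinely different route from the paper's. The paper's own proof is a two-line reduction: $L$ is regular iff $L_{dp}$ is regular, and then it simply cites Varlet's result (\cite[Theorem 3]{Va68}) that a distributive double $p$-algebra is regular iff every chain of prime ideals has at most two elements. You instead start from the determination-congruence criterion $\Phi=\triangle$, translate it through Priestley duality into the statement that a clopen decreasing set $X$ is determined by the pair $([X),X^\sharp)$, and then prove the equivalence with height at most $1$ from scratch --- in effect you reprove the cited theorem of Varlet in dual form. The details check out: the formula $X^+=(P\setminus X]$ is right; the forward direction is a clean case split on whether a point of $X$ is minimal or maximal; and in the converse your two separation steps do produce clopen decreasing $Y\subsetneq X$ with $[Y)=[X)$ and $Y^\sharp=X^\sharp=\emptyset$, since every point of $X$ lies above a point of $M=\mathrm{Min}(P)\cap X$ and $M$ is untouched by $S$. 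Be aware that your argument quietly relies on two facts that hold here precisely because $P$ is the dual of a $pm$-algebra (equivalently, of a double $p$-algebra): that $\mathrm{Max}(P)$ is closed, and that every element lies above a minimal and below a maximal element; both are recorded in the paper's preliminaries, so the reliance is legitimate. As for what each approach buys: the paper's citation is shorter and keeps the section moving, while your argument is self-contained, produces an explicit dual-space witness to the failure of regularity, and rehearses exactly the separation technique that the paper itself deploys later (e.g., in Lemma \ref{C3A}).
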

\begin{proof}
 Recall that $L$ is regular if and only if its associated distributive double $p$-algebra $L_{dp}$ is regular, which, by \cite[Theorem 3]{Va68}, is equivalent to every chain of prime ideals of $L$ having at most two elements, that is $(P;\leq )$ having height at most $1$. 
\end{proof}

In the context of the previous result, we have that $L$ is regular if and only if $P={\rm Min}(P)\cup {\rm Max}(P)$.\\

In what follows, we consider that $n<\infty$, for all $ n<\omega$.\\

Let $(P;\leq )$ be a poset.  The comparability graph of $P$ is the undirected graph with $P$ as the set of vertices and in which there is an edge between $x$ and $y$ if and only if $x \neq y$ and $x$ and $y$ are comparable in the poset. For $x,y \in P$, the {\it distance between} $x$ {\it and} $y$, denoted $\ell(x,y)$, is $0$ if $x=y$ and otherwise is the length of a shortest (finite) path between $x$ and $y$ in the comparability graph of $P$, if such a path exists. In this case  we say that the distance between $x$ and $y$ is {\it finite},  otherwise the distance is said to be {\it infinite} and we write $\ell(x,y)=\infty$.\\

An {\it order component} $Q\subseteq P$ is a non-empty set such that, for any $x,y\in Q$, $\ell(x,y)$ is finite and, for any $x\in Q$ and any $y\not\in Q$, $\ell(x,y)$ is infinite. The order components of $(P;\leq)$ form a partition of $P$ and we denote by $Q_x$ the order component to which $x\in P$ belongs. \\
For $ X\subseteq P$ and $x\in P$, the {\it distance of $x$ from $X$}, denoted $\ell(x,X)$, is the least   $\ell(x,y)$ with $y\in X$ if $X\neq\emptyset$, and $\ell(x,\emptyset)=\infty$. \\

Let $P$ be a $pm$-space. Since $\zeta$ is an involutive dual order-isomorpism on $P$, it is obvious that, for any $x,y\in P$ and $X\subseteq P$, we have  $\ell(x,y)=\ell (\zeta(x),\zeta(y))$, $\ell(x,\zeta(y))=\ell (\zeta(x),y)$ and $\ell(x, X)=\ell(\zeta(x),\zeta (X))$. Moreover, if $Q$ is an order component of $P$, then $\zeta(Q)$ is also an order component and, consequently, either $Q\cap \zeta(Q)=\emptyset$ or $Q=\zeta (Q)$.

\begin{Lemma}\label{C2} Let $P$ be a $pm$-space of height at most $1$ and  $X\subseteq P$ be a clopen decreasing set. Then we have:\\
 For $n<\omega$,
 \[ X^{n(\prime \ast)}  = \{ x\colon \ell(x, (P\setminus X))>n  \}, \mbox{ if } n \mbox{ is even,} \]
and,
\[ X^{n(\prime \ast)}  = \{ x\colon \ell (x,\zeta (P\setminus X))>n  \}, \mbox{ if } n \mbox{ is odd}. \]  
 
Moreover,  $\ell(x,(P\setminus X))$ is infinite precisely when
$Q_x\subseteq X$ and $\ell(x,\zeta (P\setminus X))$ is infinite precisely when
$\zeta (Q_x)\subseteq X$ and
 
\end{Lemma}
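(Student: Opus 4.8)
The plan is to establish the displayed formula for $X^{n(\prime\ast)}$ by induction on $n$, treating the two ``Moreover'' assertions separately at the end. It is convenient to write $S_n=P\setminus X$ when $n$ is even and $S_n=\zeta(P\setminus X)$ when $n$ is odd; as $\zeta$ is an involution, $S_{n+1}=\zeta(S_n)$, so in fact only two sets occur and the claim reads uniformly $X^{n(\prime\ast)}=\{x\colon \ell(x,S_n)>n\}$. The base case $n=0$ is immediate, since $\ell(x,P\setminus X)=0$ exactly when $x\in P\setminus X$, whence $\{x\colon \ell(x,P\setminus X)>0\}=X=X^{0(\prime\ast)}$.

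For the inductive step I would unwind $X^{(n+1)(\prime\ast)}=\big((X^{n(\prime\ast)})^{\prime}\big)^{\ast}$ by means of the dual operations from Section \ref{SB}: for a clopen decreasing $Y$ one has $Y^{\prime}=\zeta(P\setminus Y)$ and $Y^{\ast}=P\setminus[Y)$, so $Y^{\prime\ast}=P\setminus[\zeta(P\setminus Y))$. Since $\zeta$ is an order-reversing involution, a short computation rewrites membership as: $x\in Y^{\prime\ast}$ iff every $v\geq\zeta(x)$ lies in $Y$. Taking $Y=X^{n(\prime\ast)}$ (again clopen decreasing, being a member of $E(P)$) and invoking the induction hypothesis, we get that $x\notin X^{(n+1)(\prime\ast)}$ iff there exists $v\geq\zeta(x)$ with $\ell(v,S_n)\leq n$. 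On the other hand, the formula to be proved for $n+1$ asserts that $x\notin X^{(n+1)(\prime\ast)}$ iff $\ell(x,S_{n+1})\leq n+1$, and by the distance identities noted above, $\ell(x,S_{n+1})=\ell(x,\zeta(S_n))=\ell(\zeta(x),S_n)$. Writing $y=\zeta(x)$ and matching the two descriptions of $x\notin X^{(n+1)(\prime\ast)}$, it therefore suffices to prove
\[ (\exists\,v\geq y)\ \ell(v,S_n)\leq n \quad\Longleftrightarrow\quad \ell(y,S_n)\leq n+1 . \]

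The engine behind this equivalence is a single structural observation valid in any poset of height at most $1$: every path in the comparability graph strictly alternates up-steps and down-steps, because an up-step lands on a non-minimal, hence maximal, vertex from which only down-steps are possible, and dually. Two consequences are used. A shortest path to an increasing set must end with an up-step (else its penultimate vertex already lies in the set and is closer), and dually a shortest path to a decreasing set ends with a down-step; and, combined with the direction of the initial step, this determines the parity of each finite distance. The forward implication is then easy: for $v=y$ it is trivial, and for $v>y$ we have $\ell(y,v)=1$, so $\ell(y,S_n)\leq 1+\ell(v,S_n)\leq n+1$ by the triangle inequality (which also forces finiteness). For the converse, if $\ell(y,S_n)\leq n$ take $v=y$, leaving only $\ell(y,S_n)=n+1$; if $y$ is minimal, the first edge of a shortest path goes up and its second vertex serves as $v$. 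The one genuinely delicate point — and the main obstacle — is the case $y$ maximal, where no $v>y$ is available; I expect to dispose of it by showing it never arises. Indeed, for $n$ even, $S_n=P\setminus X$ is increasing, so a shortest path from the maximal point $y$ begins with a down-step and ends with an up-step, forcing even length and contradicting that $n+1$ is odd; for $n$ odd, $S_n=\zeta(P\setminus X)$ is decreasing, so the path begins and ends with down-steps, forcing odd length and contradicting that $n+1$ is even. Thus $\ell(y,S_n)=n+1$ is impossible for maximal $y$, the equivalence holds, and the induction closes.

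The two ``Moreover'' claims are routine, since $\ell(x,y)$ is finite exactly when $y\in Q_x$. Hence $\ell(x,P\setminus X)$ is infinite iff $Q_x\cap(P\setminus X)=\emptyset$, that is $Q_x\subseteq X$; and $\ell(x,\zeta(P\setminus X))$ is infinite iff $Q_x\cap\zeta(P\setminus X)=\emptyset$, which upon applying the bijection $\zeta$ and using $\zeta^2=\mathrm{id}$ becomes $\zeta(Q_x)\cap(P\setminus X)=\emptyset$, i.e. $\zeta(Q_x)\subseteq X$.
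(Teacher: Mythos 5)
Your proof is correct and follows essentially the same route as the paper's: induction on $n$, unwinding $^{\prime\ast}$ via the dual-space operations, and a parity/alternation argument for shortest paths in a height-at-most-$1$ poset that exploits the increasing (resp.\ decreasing) character of $P\setminus X$ (resp.\ $\zeta(P\setminus X)$). The only difference is organizational: you treat the even and odd cases uniformly via $S_n$ and reduce the inductive step to the single equivalence $(\exists\, v\geq y)\ \ell(v,S_n)\leq n \iff \ell(y,S_n)\leq n+1$, whereas the paper runs the same argument twice, once for each parity.
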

\begin{proof} Let $X\subseteq P$ be a clopen decreasing set.\\

 We begin by proving the last statement. Let $x\in P$. It is obvious that $\ell(x,(P\setminus X))$ is infinite if and only if $Q_x\cap (P\setminus X)=\emptyset$ or, equivalently, $Q_x\subseteq X$, and that $\ell(x,\zeta (P\setminus X))$ is infinite precisely when $Q_x\cap\zeta (P\setminus X) = \emptyset$, that is, since $\zeta$ is an involution,
$\zeta (Q_x)\cap (P\setminus X) = \emptyset$ or, equivalently, $\zeta (Q_x)\subseteq X$.\\

Now we prove, inductively, the first part. As $X^{0(\prime \ast)}=X$, it is obvious that $X^{0(\prime \ast)}=\{x\colon  \ell (x, (P\setminus X))>0\}  $.\\

Proceed inductively. For $n$ even, suppose that
 $X^{n(\prime \ast)}  = \{x\colon  \ell (x, (P\setminus X))>n\} $. Hence, we have
$P\setminus X^{n(\prime \ast)} = \{ x\colon \ell(x, (P\setminus X))\leq n \}$.
Since $\zeta$ is a dual order-isomorphism, we get 
$X^{n(\prime \ast)\prime}= \zeta ( P\setminus X^{n(\prime \ast)}) = \{ \zeta (x)\colon \ell(x, (P\setminus X))\leq n \}=\{ \zeta (x)\colon \ell(\zeta (x), \zeta (P\setminus X))\leq n \}=\{ x\colon \ell(x, \zeta (P\setminus X))\leq n  \}$. Thus,
\begin{equation} \label{eq}
X^{n(\prime \ast)\prime}= \{ x\colon \ell(x, \zeta (P\setminus X))\leq n  \}.
\end{equation} 

  By definition,   $X^{(n+1)(\prime \ast)}=P\setminus [X^{n(\prime \ast) \prime})$. Obviously,  $\{ x\colon \ell(x, \zeta (P\setminus X))> n+1  \}\subseteq X^{(n+1)(\prime \ast)}$,  in view of \eqref{eq}. Let $x\in X^{(n+1)(\prime \ast)}$.  So, we have, by \eqref{eq}, that $\ell(x,\zeta (P\setminus X))\not \leq n$. We also claim that $\ell(x,\zeta (P\setminus X))>n+1$.  For, suppose $\ell(x,\zeta (P\setminus X))=n+1$. Then $\ell(x, z)=n+1$, for some $z\in \zeta (P\setminus X)$.
As $n+1$ is odd, we have that $x\in \mathrm{Max}(P)$ and $z\in \mathrm{Min}(P)$, or $x\in \mathrm{Min}(P)$ and $z\in \mathrm{Max}(P)$. The latter yields a contradiction, since, as $\zeta (P\setminus X)$ is decreasing, there would exist $z_1\in \zeta (P\setminus X)$ such that $\ell(x,z_1)=n$ and so $\ell(x,\zeta (P\setminus X))\leq n$.   Thus $x\in \mathrm{Max}(P)$ and $z\in \mathrm{Min}(P)$ and then $x> y$, for some $y$ such that $\ell(y,\zeta (P\setminus X))=n$. So $x\in [X^{n(\prime \ast) \prime})$, a contradiction, proving the claim. Thus $X^{(n+1)(\prime \ast)}  \subseteq \{ x\colon \ell (x,\zeta (P\setminus X))>n+1  \} $ and we conclude that $X^{(n+1)(\prime \ast)}= \{ x\colon \ell (x,\zeta (P\setminus X))>n+1  \}$.\\

Now, as $X^{(n+1)(\prime \ast)}= \{ x\colon \ell (x,\zeta (P\setminus X))>n+1  \} $ and $\zeta$ is an involutive dual order-isomorphism, we have 
$X^{(n+1)(\prime \ast) \prime}=\zeta ( P\setminus X^{(n+1)(\prime \ast)}) = \{ \zeta (x)\colon \ell(x, \zeta (P\setminus X))\leq n+1 \}=\{ \zeta (x)\colon \ell(\zeta(x), (P\setminus X))\leq n+1 \}
 = \{ x\colon \ell(x, (P\setminus X))\leq n+1  \}$. By definition, $ X^{(n+2)(\prime \ast)}=P\setminus  [X^{(n+1)(\prime \ast) \prime})$. It is obvious that $\{ x\colon \ell (x,(P\setminus X))>n+2  \}\subseteq X^{(n+2)(\prime \ast)}$.
 Let $x\in X^{(n+2)(\prime \ast)}$. Then $\ell(x, (P\setminus X))> n+1$.  Suppose $\ell(x, (P\setminus X))= n+2$. Then $\ell(x, z)=n+2$, for some $z\in P\setminus X$. As $n+2$ is even, we have that $x, z\in \mathrm{Max}(P)$  or $x, z\in \mathrm{Min}(P)$. The latter yields a contradiction, since, as $P\setminus X$ is increasing, there would exist $z_1\in P\setminus X$ such that $\ell(x,z_1)=n+1$ and so $\ell(x, (P\setminus X))\leq n+1$. Thus $x, z\in \mathrm{Max}(P)$ and $x> y$, for some $y$ such that $\ell(y,(P\setminus X))=n+1$. So $x\in [X^{(n+1)(\prime \ast) \prime})$, which is a contradiction. Thus $X^{(n+2)(\prime \ast)}  \subseteq \{ x\colon \ell (x,(P\setminus X))>n+2  \} $ and we conclude that $X^{(n+2)(\prime \ast)}= \{ x\colon \ell (x, (P\setminus X))>n+2  \}$.
\end{proof}

Let $P$ be a $pm$-space  and $x,y\in P$.  The  {\it $\zeta$-distance  from $x$ to $y$}, denoted $\ell_{\zeta}(x,y)$, is the least 
 element of the set $\{\ell(x,y), \ell (x,\zeta (y))\}$, that is, $\ell_{\zeta}(x,y)=\mathrm{min}\{\ell(x,y), \ell (x,\zeta (y))\}$. 
Since $\ell (x,\zeta (y))=\ell(\zeta(x),y)$, we have that $\ell_{\zeta}(x,y)=\ell_{\zeta}(y,x)$ and we just call $\ell_{\zeta}(x,y)$ the {\it $\zeta$-distance between $x$ and $y$}. If $\ell_{\zeta}(x,y)=\infty$, we say that the $\zeta$-distance between $x$ and $y$ is {\it infinite}, otherwise it is said to be {\it finite}.\\

From the definition of $\zeta$-distance and from the fact that $\zeta$ is an involutive dual order-isomorphism, it is clear that  $\ell_{\zeta}(x,y)=\ell_{\zeta}(\zeta(x),\zeta (y))=\ell_{\zeta}(x,\zeta (y))=\ell_{\zeta}(\zeta(x),y)$.\\

Let $ n<\omega$. We say that $P$ is  of  {\it $\zeta$-width} $n$ if, for all $x,y\in P$, whenever it is the case  that $\ell_{\zeta}(x,y)$ is finite, then $\ell_{\zeta}(x,y)\leq n$. 
It is clear that $P$ is  of  $\zeta$-width $n$ if and only if, for any order component $Q\subseteq P$ and any $x,y\in Q$, $\ell_{\zeta}(x,y)\leq n$.\\

The immediate objective is to establish Theorem \ref{C3}, the next lemma is in preparation for this.

\begin{Lemma}\label{C3B} Let $P$ be a $pm$-space of height at most $1$, $X\subseteq P$ a clopen decreasing set and let $Q$ be an order component of $P$. We have\\
{\rm (1)} If $Q\cup \zeta(Q)\subseteq X$, then $Q\cup \zeta (Q)\subseteq (X\cap X^{\prime \ast})^{m(\prime \ast)}$, for every $m<\omega$.\\
{\rm (2)} If $(Q\cup \zeta (Q)) \cap X = \emptyset$, then  $(Q\cup \zeta (Q)) \cap (X\cap X^{\prime \ast})^{m(\prime \ast)} = \emptyset$, for every $m<\omega$.\\
{\rm (3)} If $(Q\cup \zeta (Q)) \cap X \neq \emptyset$, $(Q\cup \zeta (Q)) \cap (P\setminus X) \neq \emptyset$ and there exists $n<\omega$, such that, for any $x,y\in Q$, $\ell_{\zeta}(x,y)\leq n$, then $(Q\cup \zeta (Q)) \cap (X\cap X^{\prime \ast})^{m(\prime \ast)} = \emptyset$, for $n\leq m<\omega$.
\end{Lemma}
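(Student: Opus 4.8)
The plan is to reduce all three parts to the distance formulas of Lemma \ref{C2} applied to the clopen decreasing set $Y := X \cap X^{\prime \ast}$, which is clopen decreasing (being obtained from $X$ by the algebra operations of $E(P)$), so Lemma \ref{C2} applies to it. By that lemma, for $m$ even $Y^{m(\prime \ast)} = \{x : \ell(x, P\setminus Y) > m\}$ and for $m$ odd $Y^{m(\prime \ast)} = \{x : \ell(x, \zeta(P\setminus Y)) > m\}$, the relevant distance being infinite exactly when $Q_x \subseteq Y$, respectively $\zeta(Q_x) \subseteq Y$. The single computational fact I would extract first is the inclusion
\[ (P\setminus X) \cup \zeta(P\setminus X) \subseteq P\setminus Y. \]
The left part is immediate since $Y \subseteq X$; the right part follows because Lemma \ref{C2} applied to $X$ with $n=1$ gives $X^{\prime \ast} = \{z : \ell(z, \zeta(P\setminus X)) > 1\}$, so every point of $\zeta(P\setminus X)$ lies outside $X^{\prime \ast}$, hence outside $Y$.

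For part (1), assuming $Q\cup\zeta(Q)\subseteq X$, I would first show $Q\cup\zeta(Q)\subseteq Y$: for any $x$ in this set, $\zeta(Q_x)\in\{Q,\zeta(Q)\}\subseteq X$, so $\ell(x,\zeta(P\setminus X))=\infty>1$ and thus $x\in X^{\prime \ast}$; together with $x\in X$ this gives $x\in Y$. Then both $Q_x\subseteq Y$ and $\zeta(Q_x)\subseteq Y$, so by Lemma \ref{C2} the relevant distance is infinite for both parities of $m$, placing $x$ in $Y^{m(\prime \ast)}$ for every $m$. For part (2), since $Y\subseteq X$ the hypothesis gives $(Q\cup\zeta(Q))\cap Y=\emptyset$; as $Q\cup\zeta(Q)$ is $\zeta$-invariant, each $x$ in it satisfies $x\in P\setminus Y$ and $\zeta(x)\in P\setminus Y$, whence $\ell(x,P\setminus Y)=\ell(x,\zeta(P\setminus Y))=0$, and Lemma \ref{C2} keeps $x$ out of $Y^{m(\prime \ast)}$ for every $m$.

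Part (3) is the crux. The hypothesis $(Q\cup\zeta(Q))\cap(P\setminus X)\neq\emptyset$ supplies a point $b\in(Q\cup\zeta(Q))\cap(P\setminus X)$; by the displayed inclusion both $b$ and $\zeta(b)$ lie in $P\setminus Y$, and hence (applying $\zeta$) both lie in $\zeta(P\setminus Y)$ as well. For an arbitrary $x\in Q\cup\zeta(Q)$, using that $b,\zeta(b)$ belong to both $P\setminus Y$ and $\zeta(P\setminus Y)$ I obtain
\[ \ell(x,P\setminus Y)\leq\ell_{\zeta}(x,b)\quad\text{and}\quad \ell(x,\zeta(P\setminus Y))\leq\ell_{\zeta}(x,b). \]
The $\zeta$-distance identities $\ell_{\zeta}(x,b)=\ell_{\zeta}(\zeta(x),b)=\ell_{\zeta}(x,\zeta(b))$ then let me replace $x$ and $b$ by whichever of $\{x,\zeta(x)\}$ and $\{b,\zeta(b)\}$ lies in $Q$, so the width hypothesis yields $\ell_{\zeta}(x,b)\leq n\leq m$. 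Feeding this back through Lemma \ref{C2} (even and odd cases) shows $x\notin Y^{m(\prime \ast)}$ for every $m\geq n$, which is the assertion.

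The only genuine obstacle I anticipate is the bookkeeping of the parity of $m$ together with the $\zeta$-image juggling: one must ensure the anchor $b$ and its image $\zeta(b)$ are simultaneously available in both $P\setminus Y$ and $\zeta(P\setminus Y)$ so that the single bound $\ell_{\zeta}(x,b)\leq m$ dispatches the even and odd formulas uniformly. Once the inclusion $(P\setminus X)\cup\zeta(P\setminus X)\subseteq P\setminus Y$ is established, the $\zeta$-invariance of every set in sight makes the parity cases collapse, and no idea beyond Lemma \ref{C2} and the $\zeta$-distance identities is required.
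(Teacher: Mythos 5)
Your proof is correct and follows essentially the same route as the paper's: both reduce everything to Lemma \ref{C2} applied to the clopen decreasing set $X\cap X^{\prime\ast}$, use the $\zeta$-distance identities to extend the width bound from $Q$ to $Q\cup\zeta(Q)$, and in part (3) anchor on a point $b$ of $(Q\cup\zeta(Q))\cap(P\setminus X)$ whose images $b,\zeta(b)$ lie in both $P\setminus(X\cap X^{\prime\ast})$ and $\zeta(P\setminus(X\cap X^{\prime\ast}))$. The only differences are organizational (you invoke Lemma \ref{C2} directly at each $m$, where the paper uses the monotonicity $(X\cap X^{\prime\ast})^{m(\prime\ast)}\subseteq(X\cap X^{\prime\ast})^{n(\prime\ast)}$ and, in part (1), an induction on supersets), not conceptual.
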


\begin{proof}
Recall that $\zeta$ is an involutive dual order-isomorpism.
Let $Q\subseteq P$ be an order component. Then $\zeta (Q)$ is also an order component.\\

(1) It is sufficient to show that for any clopen decreasing set $Y\subseteq P$ such that $Q\cup \zeta (Q) \subseteq Y$, we have $Q\cup \zeta (Q) \subseteq Y^{\prime\ast}$.  Let $x\in Q\cup \zeta (Q)$. We have that $Q_x=Q$ or $Q_x=\zeta(Q)$, so $Q_x\subseteq Q\cup \zeta (Q) \subseteq Y$ and, consequently, $\zeta(Q_x)\subseteq Q\cup \zeta (Q) \subseteq Y$. Applying Lemma \ref{C2}, we conclude first that $\ell (x,\zeta(P\setminus Y))$ is infinite and  then that $x\in Y^{\prime\ast}$. Thus   $Q\cup \zeta (Q)\subseteq  Y^{\prime\ast}$. \\

(2) Suppose that $(Q\cup \zeta (Q)) \cap X = \emptyset$.  Then  
 $(Q\cup \zeta (Q)) \cap (X\cap X^{\prime \ast}) = \emptyset$.
As, for any $m<\omega$, $(X\cap X^{\prime \ast})^{m(\prime \ast)}\subseteq X\cap X^{\prime \ast}$, we conclude that
$(Q\cup \zeta (Q)) \cap (X\cap X^{\prime \ast})^{m(\prime \ast)} = \emptyset$, for every $m<\omega$.\\

(3) Suppose that $(Q\cup \zeta (Q)) \cap X \neq\emptyset $, $(Q\cup \zeta (Q)) \cap (P\setminus X) \neq \emptyset$ and that there exists $n<\omega$, such that, for any $x,y\in Q$, $\ell_{\zeta}(x,y)\leq n$. As $\ell_{\zeta}(x,y)=\ell_{\zeta}(\zeta(x),\zeta (y))=\ell_{\zeta}(x,\zeta (y))=\ell_{\zeta}(\zeta(x),y)$, we have that $\ell_{\zeta}(x,y)\leq n$, for any $x,y\in Q\cup\zeta(Q)$. Take $y\in (Q\cup \zeta (Q)) \cap (P\setminus X)$. As $y\in P\setminus X$, we have 
$\zeta (y)\in \zeta (P\setminus X)$.  Applying Lemma \ref{C2}, we know that $\zeta (y)\not\in X^{\prime \ast}$, since $\ell (\zeta (y), \zeta (P\setminus X)) = 0$.
In particular  $y, \zeta (y)\not\in X\cap X^{\prime \ast}$.  That is  $y, \zeta (y)\in P\setminus  (X\cap X^{\prime \ast})$ and, likewise, $y, \zeta (y)\in \zeta (P\setminus  (X\cap X^{\prime \ast}))$. 
Let $x\in Q\cup \zeta (Q)$. As $y\in Q\cup \zeta (Q)$, we have  $\ell_{\zeta}(x,y)\leq n$, that is, $\ell (x,y)\leq n$ or $\ell (x,\zeta(y))\leq n$. As $y, \zeta (y)\in P\setminus  (X\cap X^{\prime \ast})$, we conclude that $\ell (x, (P\setminus  (X\cap X^{\prime \ast})))\leq n$ and, as  $y, \zeta (y)\in \zeta (P\setminus  (X\cap X^{\prime \ast}))$, we have that $\ell (x, \zeta (P\setminus  (X\cap X^{\prime \ast})))\leq n$. Applying Lemma \ref{C2},
$x\not\in (X\cap X^{\prime \ast})^{n(\prime \ast)}$.
That is, $(Q\cup \zeta (Q))\cap (X\cap X^{\prime \ast})^{n(\prime \ast)} = \emptyset$ and the result follows from the fact that, for any $m\geq n$, $(X\cap X^{\prime \ast})^{m(\prime \ast)}\subseteq (X\cap X^{\prime\ast})^{n(\prime \ast)}$.
\end{proof}

\begin{Theorem}\label{C3} Let $L$ be a regular $pm$-algebra and $P$ be its dual $pm$-space. If $P$ has $\zeta$-width $n<\omega$, then $L$ is of range $n$.

\end{Theorem}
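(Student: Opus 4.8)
The plan is to pass to the dual space, reduce the equational condition defining range $n$ to a statement about clopen decreasing sets, and then settle that statement block by block using Lemma \ref{C3B}.

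First I would reformulate. Under the duality $L \cong E(P)$, every element of $L$ is identified with a clopen decreasing subset of $P$, with $\wedge, \vee, {}^\ast, {}^\prime$ becoming $\cap, \cup, {}^\ast, {}^\prime$ as defined in Section \ref{SB}; hence the element $x \wedge x^{\prime \ast}$ corresponds to the clopen decreasing set $X \cap X^{\prime \ast}$, and $(x \wedge x^{\prime \ast})^{m(\prime \ast)}$ to $(X \cap X^{\prime \ast})^{m(\prime \ast)}$. Consequently, to prove that $L$ is of range $n$ it suffices to show that for every clopen decreasing $X \subseteq P$ one has $(X \cap X^{\prime \ast})^{n(\prime \ast)} = (X \cap X^{\prime \ast})^{(n+1)(\prime \ast)}$.

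Fix such an $X$. Recall that $\zeta$ permutes the order components of $P$ and that $Q = \zeta(Q)$ or $Q \cap \zeta(Q) = \emptyset$; thus the $\zeta$-closed sets $Q \cup \zeta(Q)$, as $Q$ ranges over the order components, form a partition of $P$. I would analyse $(X \cap X^{\prime \ast})^{m(\prime \ast)}$ on each block $R = Q \cup \zeta(Q)$ of this partition separately. Exactly one of three mutually exclusive possibilities holds for $R$: (i) $R \subseteq X$; (ii) $R \cap X = \emptyset$; (iii) both $R \cap X \neq \emptyset$ and $R \cap (P \setminus X) \neq \emptyset$. In case (i), Lemma \ref{C3B}(1) gives $R \subseteq (X \cap X^{\prime \ast})^{m(\prime \ast)}$ for every $m < \omega$; in case (ii), Lemma \ref{C3B}(2) gives $R \cap (X \cap X^{\prime \ast})^{m(\prime \ast)} = \emptyset$ for every $m < \omega$. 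For case (iii) I would invoke the hypothesis: since $P$ has $\zeta$-width $n$ and any two points $x, y$ of the same order component $Q$ satisfy $\ell(x,y) < \infty$, we have $\ell_{\zeta}(x,y) \leq \ell(x,y) < \infty$ and therefore $\ell_{\zeta}(x,y) \leq n$; thus the distance hypothesis of Lemma \ref{C3B}(3) is met, and that part yields $R \cap (X \cap X^{\prime \ast})^{m(\prime \ast)} = \emptyset$ for all $m \geq n$.

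Combining the three cases, for every $m \geq n$ the trace of $(X \cap X^{\prime \ast})^{m(\prime \ast)}$ on each block $R$ is independent of $m$: it is all of $R$ in case (i) and empty in cases (ii) and (iii). Since the blocks partition $P$, the set $(X \cap X^{\prime \ast})^{m(\prime \ast)}$ is one and the same for all $m \geq n$; in particular $(X \cap X^{\prime \ast})^{n(\prime \ast)} = (X \cap X^{\prime \ast})^{(n+1)(\prime \ast)}$, which is what was required. The essential work is carried out in Lemma \ref{C3B}, so the only points needing care here are the verification that the three cases are exhaustive and $\zeta$-invariant, and the observation that the $\zeta$-width hypothesis is precisely what supplies the uniform bound $n$ forcing stabilisation in case (iii); the remaining translation between the algebraic formulation of range $n$ and the dual-space statement is routine.
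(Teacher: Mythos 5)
Your proposal is correct and follows essentially the same route as the paper's proof: the same dual-space reformulation of range $n$, the same partition of $P$ into $\zeta$-invariant blocks $Q\cup\zeta(Q)$, and the same three-case application of Lemma \ref{C3B}. The only detail you omit is the explicit appeal to Theorem \ref{C1} to get that $P$ has height at most $1$ (from the regularity of $L$), which is a standing hypothesis of Lemma \ref{C3B}; once that one-line citation is added, your argument coincides with the paper's.
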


\begin{proof} As $L$ is a regular $pm$-algebra, $P$ has height at most $1$, by Theorem \ref{C1}. Suppose the $\zeta$-width of $P$ is $n<\omega$. If $P=\emptyset$, then $L$ is trivial and is, obviously, of range $n$. Let $P\neq\emptyset$. We must show that, for any $x\in L$, $(x\wedge x^{\prime \ast})^{n(\prime \ast)} = (x\wedge x^{\prime \ast})^{(n+1)(\prime \ast)}$, or, equivalently, for any clopen decreasing set $X\subseteq P$, $(X\cap X^{\prime \ast})^{n(\prime \ast)} = (X\cap X^{\prime \ast})^{(n+1)(\prime \ast)}$. As the $\zeta$-width of $P$ is $n$, then, for any order component $Q\subseteq P$ and any $x,y\in Q$, $\ell_{\zeta}(x,y)\leq n$. Let $X\subseteq P$ be a clopen decreasing set. For any order component $Q$ we have that $Q\cup\zeta(Q)\subseteq X$, or $(Q\cup\zeta(Q))\cap X=\emptyset$, or $(Q\cup \zeta (Q)) \cap X \neq \emptyset$ and $(Q\cup \zeta (Q)) \cap (P\setminus X) \neq \emptyset$, so, applying, respectively, (1), (2) and (3) of the previous lemma \ref{C3B}, we conclude that $(Q\cup \zeta (Q))\cap (X\cap X^{\prime \ast})^{n(\prime \ast)}=(Q\cup \zeta (Q))\cap (X\cap X^{\prime \ast})^{(n+1)(\prime \ast)}$. Thus $(X\cap X^{\prime \ast})^{n(\prime \ast)}=(X\cap X^{\prime \ast})^{(n+1)(\prime \ast)}$, since the order components of $P$ form a partition of $P$.
\end{proof}

The next objective is the converse of Theorem \ref{C3}, for which (3) of the next lemma isolates the technical step needed.\\

Let $P$ be  $pm$-space. For each $n<\omega$, and each $x\in P$, let $D_n(x):  =\{z \in P \colon \ell(z,x)\leq n\}$.

\begin{Lemma}\label{C3A} Let $P$ be a $pm$-space of height at most $1$, $x, y\in P$ and $n<\omega$.  We have\\
{\rm (1)} If $x\in  \mathrm{Min}(P)$, then $D_n(x)$ is closed and decreasing whenever $n$ is even, and $D_n(x)$ is closed and increasing whenever $n$ is odd.\\
{\rm (2)} If $x\in  \mathrm{Max}(P)$, then  $D_n(x)$ is closed and increasing whenever $n$ is even, and $D_n(x)$ is closed and decreasing whenever $n$ is odd.\\
{\rm (3)} If $n\geq 1$, $\ell(x,y)=n$, $\ell(x, \zeta (y))\not\leq n$ and $D_{n-1}(x)\cup D_n(\zeta(x))$ is decreasing, then there exists a clopen decreasing set $X$ such that $\ell(x, (P\setminus (X\cap X^{\prime \ast})))=n$.
\end{Lemma}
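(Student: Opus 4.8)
The plan is to prove (1) and (2) by induction on $n$ and then to use them, together with a single Priestley separation, to produce the set $X$ required in (3). For (1), first note that since $P$ has height at most $1$, whenever $a<b$ in $P$ we have $a\in\mathrm{Min}(P)$ and $b\in\mathrm{Max}(P)$; hence any shortest path in the comparability graph alternates strictly between minimal and maximal elements. For $x\in\mathrm{Min}(P)$ this means the elements newly reached at distance $n+1$ lie one level above those at distance $n$ when $n$ is even, and one level below when $n$ is odd, giving the recursion $D_{n+1}(x)=[D_n(x))$ for $n$ even and $D_{n+1}(x)=(D_n(x)]$ for $n$ odd. Starting from $D_0(x)=\{x\}$, which is closed and (as $x$ is minimal) decreasing, the inductive step is immediate: in a Priestley space $[C)$ and $(C]$ are closed whenever $C$ is closed (Section~\ref{SB}), $[\,\cdot\,)$ yields an increasing set and $(\,\cdot\,]$ a decreasing one, so closedness is preserved and the increasing/decreasing alternation matches the parity of $n$. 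Part (2) then follows by applying $\zeta$: it is an order-reversing homeomorphism with $\zeta^2=\mathrm{id}$ that interchanges $\mathrm{Min}(P)$ and $\mathrm{Max}(P)$, sends decreasing sets to increasing sets, preserves closedness, and satisfies $\zeta(D_n(z))=D_n(\zeta(z))$ because $\ell(u,z)=\ell(\zeta(u),\zeta(z))$; thus (2) for $x\in\mathrm{Max}(P)$ is exactly (1) for $\zeta(x)\in\mathrm{Min}(P)$ transported by $\zeta$.

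For (3) I would first identify what $X$ must achieve. Writing $S=P\setminus(X\cap X^{\prime\ast})$, the equality $\ell(x,S)=n$ is equivalent to the conjunction of (i) $D_{n-1}(x)\subseteq X\cap X^{\prime\ast}$, which forces $\ell(x,S)\geq n$, and (ii) some element at distance $n$ from $x$ lying in $S$, which forces $\ell(x,S)\leq n$; for (ii) I will simply arrange $y\notin X\cap X^{\prime\ast}$, using $\ell(x,y)=n$. The candidate is built around $W:=D_{n-1}(x)\cup D_n(\zeta(x))$, which by (1) and (2) is closed and which is decreasing by hypothesis.

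The key computation is that $y\notin W$: indeed $y\notin D_{n-1}(x)$ since $\ell(x,y)=n>n-1$, and $y\notin D_n(\zeta(x))$ since $\ell(\zeta(x),y)=\ell(x,\zeta(y))\not\leq n$. Hence, by the separation property recalled in Section~\ref{SB}, there is a clopen increasing $Y$ with $y\in Y$ and $Y\cap W=\emptyset$; put $X=P\setminus Y$, a clopen decreasing set with $W\subseteq X$ and $y\notin X$. Then $y\notin X\supseteq X\cap X^{\prime\ast}$ gives (ii). For (i), recall that $X^{\prime\ast}=P\setminus[\zeta(P\setminus X))$ (equivalently Lemma~\ref{C2} with $n=1$), so $z\in X^{\prime\ast}$ iff $\zeta(v)\in X$ for every $v\leq z$. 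Given $z\in D_{n-1}(x)$ and $v\leq z$, we have $\zeta(v)\geq\zeta(z)$ with $\zeta(z)\in D_{n-1}(\zeta(x))$, whence $\zeta(v)\in[D_{n-1}(\zeta(x)))\subseteq D_n(\zeta(x))\subseteq W\subseteq X$, the first inclusion holding because moving up one level adds at most $1$ to the distance. Thus $z\in X^{\prime\ast}$, and since also $D_{n-1}(x)\subseteq W\subseteq X$, we obtain (i), completing the verification that $\ell(x,S)=n$.

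I expect the main obstacle to be pinning down exactly which closed decreasing set one separates $y$ from, and the matching of the up-set $[D_{n-1}(\zeta(x)))$ with $D_n(\zeta(x))$: the set $X\cap X^{\prime\ast}$ is not a distance ball, and membership in it forces a condition on the whole down-set of each point via $\zeta$, so the correct quantity to control is $[D_{n-1}(\zeta(x)))$ rather than $D_{n-1}(x)$ itself. The hypotheses are exactly calibrated for this: the assumption that $W$ is decreasing is precisely what allows a single clopen decreasing $X\supseteq W$ to be extracted, while $\ell(x,\zeta(y))\not\leq n$ is precisely what keeps $y$ outside $W$, so that the separation can be carried out while still retaining all of $D_{n-1}(x)$ inside $X\cap X^{\prime\ast}$.
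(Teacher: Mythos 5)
Your proof is correct and follows essentially the same route as the paper: the same induction giving $D_{n+1}(x)=[D_n(x))$ for $n$ even and $D_{n+1}(x)=(D_n(x)]$ for $n$ odd in (1)--(2), and for (3) the identical construction of separating $y$ from the closed decreasing set $W=D_{n-1}(x)\cup D_n(\zeta(x))$ by a clopen increasing $Y$ and setting $X=P\setminus Y$. The only (cosmetic) difference is in verifying $D_{n-1}(x)\subseteq X^{\prime\ast}$: you unwind $X^{\prime\ast}=P\setminus[\zeta(P\setminus X))$ directly and use $[D_{n-1}(\zeta(x)))\subseteq D_n(\zeta(x))\subseteq X$, whereas the paper argues by contradiction via Lemma \ref{C2}, deducing $\ell(z,\zeta(Y))\leq 1$ and hence $D_n(x)\cap\zeta(Y)\neq\emptyset$ --- two views of the same computation.
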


\begin{proof}
(1) Suppose $x\in  \mathrm{Min}(P)$.  We have that $D_0(x)=\{z\colon \ell(z,x)=0\}=\{x\}$, which is closed and, since $x\in  \mathrm{Min}(P)$, it is decreasing. 
We proceed inductively. Suppose that $n$ is even and that $D_n(x)$ is closed and decreasing. We have $D_{n+1}(x)=\{z\colon \ell(z,x)\leq n+1\}$. Obviously, $[D_n(x))\subseteq D_{n+1}(x)$. Let $z\in D_{n+1}(x)$. Then, either $z\in D_n(x)$ or $\ell(z,x)=n+1$. In the latter case, as $n+1$ is odd and $x\in  \mathrm{Min}(P)$, we have that $z\in  \mathrm{Max}(P)$ and so $z>z_1$, for some $z_1\in D_n(x)$. Thus, $z\in [D_n(x))$. Consequently, $D_{n+1}(x)=[D_n(x))$, which is increasing and closed, since $D_n(x)$ is closed. Now, it is obvious that $(D_{n+1}(x)]\subseteq D_{n+2}(x)$. Let $z\in D_{n+2}(x)$. Then, either $z\in D_{n+1}(x)$ or $\ell(z,x)=n+2$. In the latter case, as $n+2$ is even and $x\in  \mathrm{Min}(P)$, we have that $z\in  \mathrm{Min}(P)$ and so $z<z_1$, for some $z_1\in D_{n+1}(x)$. Thus, $z\in (D_{n+1}(x)]$. Consequently, $D_{n+2}(x)=(D_{n+1}(x)]$, which is decreasing and closed, since $D_{n+1}(x)$ is closed.\\

(2) Suppose $x\in  \mathrm{Max}(P)$. Then $\zeta(x)\in  \mathrm{Min}(P)$. As $\zeta$ is an involutive continuous dual order-isomorphism $\zeta (D_n(\zeta(x)))=D_n(x)$, for any $n<\omega $, and $\zeta (D_n(\zeta(x)))$ is closed and decreasing (respectively, increasing) if and only if  $D_n(\zeta(x))$ is closed and increasing (respectively, decreasing). Now we just apply (1).\\

(3) Suppose $n\geq 1$, $\ell(x,y)=n$, $\ell(x, \zeta (y))\not\leq n$ and $D_{n-1}(x)\cup D_n(\zeta(x))$ is decreasing. By (1) and (2), we know that $D_{n-1}(x)\cup D_n(\zeta(x))$ is closed. As $\ell(x,y)=n$ and $\ell(y, \zeta (x))=\ell(x, \zeta (y))\not\leq n$, we have that $y\notin D_{n-1}(x)\cup D_n(\zeta(x))$. So, the set $D_{n-1}(x)\cup D_{n}(\zeta(x))$ is closed and decreasing and $y\notin D_{n-1}(x)\cup D_{n}(\zeta(x))$. It follows that there exists a clopen increasing set $Y$ such that $y\in Y$ and $(D_{n-1}(x)\cup D_{n}(\zeta(x)))\cap Y=\emptyset$. 
Let $X=P\setminus Y$, which is a clopen decreasing set such that $D_{n-1}(x)\cup D_{n}(\zeta(x))\subseteq X$ and $y\notin X$. From $D_{n}(\zeta(x))\cap Y=\emptyset$, we get $\zeta(D_{n}(\zeta(x)))\cap \zeta(Y)=\emptyset$. Therefore, as $\zeta(D_{n}(\zeta(x)))=D_{n}(x)$, it follows that $D_{n}(x)\cap \zeta(Y)=\emptyset$. Suppose $D_{n-1}(x)\not\subseteq X^{\prime \ast}$, that is, there exists $z\in D_{n-1}(x)$ such that $z\notin X^{\prime \ast}$. For such a $z$ we have $\ell(z,x)\leq n-1$ since $z\in D_{n-1}(x)$ and, applying Lemma \ref{C2}, $\ell(z,\zeta(Y))\leq 1$ since $z\notin X^{\prime \ast}$. Consequently, $\ell(x,\zeta(Y))\leq n$ and $D_n(x)\cap \zeta(Y)\neq\emptyset$, a contradiction. Thus  $D_{n-1}(x)\subseteq X^{\prime \ast}$ and so  $D_{n-1}(x)\subseteq X\cap X^{\prime \ast}$. As $y\notin X\cap X^{\prime \ast}$ and $\ell(x,y)=n$, we have  $\ell(x, (P\setminus (X\cap X^{\prime \ast})))\leq n$.   But $D_{n-1}(x)\subseteq X\cap X^{\prime \ast}$ and, consequently, $\ell(x, ( P\setminus (X\cap X^{\prime \ast})))> n-1$.  So $\ell(x, ( P\setminus (X\cap X^{\prime \ast})))=n$, as required.
\end{proof}

\begin{Theorem}\label{C4} Let $L$ be a regular $pm$-algebra and $P$ be its dual $pm$-space.
If there exist $x,y\in P$ such that $\ell_{\zeta} (x,y) = n$
for some  $1\leq n <\omega$, then $L$ does not satisfy the identity $(x\wedge x^{\prime \ast})^{(n-1)(\prime \ast)} \approx (x\wedge x^{\prime \ast})^{n(\prime \ast)}$.
\end{Theorem}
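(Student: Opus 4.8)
The goal is to produce a single clopen decreasing set $X\subseteq P$ for which, writing $W=X\cap X^{\prime\ast}$, the iterates $W^{(n-1)(\prime\ast)}$ and $W^{n(\prime\ast)}$ differ: passing through the duality, $W$ corresponds to $a\wedge a^{\prime\ast}$ for the element $a\in L$ dual to $X$, and $W^{m(\prime\ast)}$ to $(a\wedge a^{\prime\ast})^{m(\prime\ast)}$, so this exhibits a failure of the displayed identity in $L$. Since the chain $W\supseteq W^{(\prime\ast)}\supseteq W^{2(\prime\ast)}\supseteq\cdots$ is always decreasing, it suffices to exhibit one point lying in $W^{(n-1)(\prime\ast)}$ but not in $W^{n(\prime\ast)}$.

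The plan is to feed Lemma \ref{C3A}(3) a suitably normalised pair. Starting from $x,y$ with $\ell_{\zeta}(x,y)=n$, I first pick $y_0\in\{y,\zeta(y)\}$ realising $\ell(x,y_0)=n$. As $n\geq 1$ the point $x$ has a neighbour, so it lies purely in $\mathrm{Min}(P)$ or purely in $\mathrm{Max}(P)$; using $\ell_{\zeta}(x,y)=\ell_{\zeta}(\zeta(x),\zeta(y))$ I replace $(x,y_0)$ by $(\zeta(x),\zeta(y_0))$ when necessary to obtain a pair $\hat{x},\hat{y}$ with $\ell(\hat{x},\hat{y})=n$ and with $\hat{x}\in\mathrm{Min}(P)$ when $n$ is odd and $\hat{x}\in\mathrm{Max}(P)$ when $n$ is even. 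For this parity of $\hat{x}$, Lemma \ref{C3A}(1),(2) make $D_{n-1}(\hat{x})\cup D_n(\zeta(\hat{x}))$ decreasing. The remaining hypothesis $\ell(\hat{x},\zeta(\hat{y}))\not\leq n$ follows from an order-component/parity argument: $\ell_{\zeta}(\hat{x},\hat{y})=n$ already forces $\ell(\hat{x},\zeta(\hat{y}))\geq n$, and were this distance finite then $\hat{x},\hat{y},\zeta(\hat{y})$ would share a single component $Q=\zeta(Q)$, in whose bipartite comparability graph $\hat{y}$ and $\zeta(\hat{y})$ lie in opposite classes; hence $\ell(\hat{x},\hat{y})$ and $\ell(\hat{x},\zeta(\hat{y}))$ have opposite parities, so $\ell(\hat{x},\zeta(\hat{y}))\neq n$ and thus $>n$.

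Lemma \ref{C3A}(3) then yields a clopen decreasing $X$ with $\ell(\hat{x},P\setminus W)=n$, where $W=X\cap X^{\prime\ast}$, and its construction places $\hat{y}\notin X$, i.e.\ $\hat{y}\in P\setminus X\subseteq P\setminus W$. Exactly as in the proof of Lemma \ref{C3B}(3), $\hat{y}\in P\setminus X$ gives $\ell(\zeta(\hat{y}),\zeta(P\setminus X))=0$, whence (by Lemma \ref{C2}, applied with $m=1$, since $X^{\prime\ast}=X^{1(\prime\ast)}$) $\zeta(\hat{y})\notin X^{\prime\ast}$, so $\zeta(\hat{y})\in P\setminus W$ and therefore $\hat{y}\in\zeta(P\setminus W)$. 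This secures the two bounds $\ell(\hat{x},P\setminus W)=n$ and $\ell(\hat{x},\zeta(P\setminus W))\leq\ell(\hat{x},\hat{y})=n$.

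It remains to read off membership via Lemma \ref{C2}, and here lies the only real subtlety: consecutive iterates are governed by \emph{different} reference sets, namely $W^{m(\prime\ast)}=\{z:\ell(z,P\setminus W)>m\}$ for even $m$ but $\{z:\ell(z,\zeta(P\setminus W))>m\}$ for odd $m$, so the witness must respect this parity. When $n$ is odd I take $\hat{x}$ itself: $\ell(\hat{x},P\setminus W)=n>n-1$ puts it in $W^{(n-1)(\prime\ast)}$, while $\ell(\hat{x},\zeta(P\setminus W))\leq n$ keeps it out of $W^{n(\prime\ast)}$. When $n$ is even the two reference sets swap roles, and I instead take $\zeta(\hat{x})$: using $\ell(\zeta(\hat{x}),\zeta(S))=\ell(\hat{x},S)$ gives $\ell(\zeta(\hat{x}),\zeta(P\setminus W))=\ell(\hat{x},P\setminus W)=n>n-1$ and $\ell(\zeta(\hat{x}),P\setminus W)=\ell(\hat{x},\zeta(P\setminus W))\leq n$, so $\zeta(\hat{x})\in W^{(n-1)(\prime\ast)}\setminus W^{n(\prime\ast)}$. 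Either way $W^{(n-1)(\prime\ast)}\neq W^{n(\prime\ast)}$, as required. The hardest part is precisely this parity bookkeeping together with arranging the three hypotheses of Lemma \ref{C3A}(3); once the correct $\mathrm{Min}$/$\mathrm{Max}$ placement of $\hat{x}$ and the $\zeta$-flip of the witness for even $n$ are in place, the conclusion reduces to the two distance bounds above.
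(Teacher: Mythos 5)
Your proof is correct, and its skeleton is the same as the paper's: normalise the pair by $\zeta$-symmetry so that Lemma \ref{C3A}(1),(2) make $D_{n-1}(\hat{x})\cup D_n(\zeta(\hat{x}))$ closed and decreasing, rule out $\ell(\hat{x},\zeta(\hat{y}))\leq n$ by a parity argument (your bipartite-graph phrasing is equivalent to the paper's $\mathrm{Min}$/$\mathrm{Max}$ argument), invoke Lemma \ref{C3A}(3) to get $X$ with $\ell(\hat{x},P\setminus W)=n$ for $W=X\cap X^{\prime\ast}$, and read off membership in the iterates via Lemma \ref{C2}, using $\zeta(\hat{x})$ as the witness when $n$ is even. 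Where you genuinely diverge is the endgame. The paper never needs your second bound $\ell(\hat{x},\zeta(P\setminus W))\leq n$: it compares the two iterates of \emph{equal} parity, $W^{(n-1)(\prime\ast)}$ and $W^{(n+1)(\prime\ast)}$, which are governed by the same reference set in Lemma \ref{C2}, places the witness in the first but not the second using only $\ell(\hat{x},P\setminus W)=n$, and then deduces $W^{(n-1)(\prime\ast)}\neq W^{n(\prime\ast)}$ because the equality $W^{(n-1)(\prime\ast)}= W^{n(\prime\ast)}$ would, upon applying the operation $Y\mapsto Y^{\prime\ast}$, force $W^{(n-1)(\prime\ast)}= W^{(n+1)(\prime\ast)}$. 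You instead separate the consecutive iterates directly, which is more immediate at the finish but costs you the cross-parity bound, and to obtain it you must reach inside the \emph{proof} of Lemma \ref{C3A}(3) for the fact that its construction puts $\hat{y}\notin X$ --- a fact not contained in the lemma's statement. Your use of that fact is legitimate (the construction does give $\hat{y}\notin X$, and your derivation of $\hat{y}\in\zeta(P\setminus W)$ via Lemma \ref{C2} mirrors the argument in Lemma \ref{C3B}(3)), but it makes your proof depend on how the lemma was proved rather than on what it asserts; the paper's same-parity trick buys self-containedness at the price of one extra one-line inference.
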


\begin{proof} 
As $L$ is a regular $pm$-algebra, by Theorem \ref{C1}, $P$ has height at most $1$. Suppose $\ell_{\zeta} (x,y) = n$ for some $x,y\in P$ and $n\geq 1$. We begin with two small observations that will simplify the subsequent argument. First, we recall that $\ell_{\zeta} (x,y)=\ell_{\zeta}(\zeta(x), \zeta (y))$. Second, suppose that $\ell_{\zeta} (x,y)= \ell (x,\zeta (y))$. Since  $\ell_{\zeta}(\zeta(x), y)=\ell_{\zeta} (x,y)$, we have $\ell_{\zeta}(\zeta(x), y)=\ell (x,\zeta (y))=\ell(\zeta(x), y)$. Thus, with no loss of generality, we will assume, by the first observation, that $y\in\mathrm{Max}(P)$ and, by the second, that $\ell_{\zeta} (x,y)=\ell (x,y)$.\\

Suppose $n$ is odd.  Since $y\in \mathrm{Max}(P)$ and $n=\ell_{\zeta} (x,y) = \ell (x,y)$, we have that $x\in \mathrm{Min}(P)$ and so $\zeta(x)\in \mathrm{Max}(P)$.
We know, by (1) and (2) of Lemma \ref{C3A}, that both $D_{n-1}(x)$ and $D_{n}(\zeta(x))$ are closed and decreasing and so $D_{n-1}(x)\cup D_n(\zeta(x))$ is closed and decreasing. Since  $\ell_{\zeta} (x,y)=n$, we have that $\ell(x, \zeta (y))\not\leq n-1$. As $x, \zeta(y)\in \mathrm{Min}(P)$ and $n$ is odd we cannot have $\ell(x, \zeta (y))=n$. So $\ell(x, \zeta (y))\not\leq n$. Applying (3) of Lemma \ref{C3A},
there exists a clopen decreasing set $X$ such that
 $\ell(x, ( P\setminus (X\cap X^{\prime \ast})))=n$. For such an $X$, applying Lemma \ref{C2}, we have that 
$x\in (X\cap X^{\prime \ast})^{(n-1)(\prime \ast)}$, whilst
$x\not\in (X\cap X^{\prime \ast})^{(n+1)(\prime \ast)}$.
In particular, $(X\cap X^{\prime \ast})^{(n-1)(\prime \ast)}\neq (X\cap X^{\prime \ast})^{(n+1)(\prime \ast)}$.
It follows that $(X\cap X^{\prime \ast})^{(n-1)(\prime \ast)}\neq (X\cap X^{\prime \ast})^{n(\prime \ast)}$,
that is $L$ does not satisfy the identity $(x\wedge x^{\prime \ast})^{(n-1)(\prime \ast)} \approx (x\wedge x^{\prime \ast})^{n(\prime \ast)}$.\\

Now suppose $n$ is even. Since $n=\ell_{\zeta} (x,y) = \ell (x,y)$ and $y\in \mathrm{Max}(P)$, we have that $x\in \mathrm{Max}(P)$ and so $\zeta(x)\in \mathrm{Min}(P)$. We know, by (1) and (2) of Lemma \ref{C3A}, that both $D_{n-1}(x)$ and $D_{n}(\zeta(x))$ are closed and decreasing and so $D_{n-1}(x)\cup D_n(\zeta(x))$ is closed and decreasing. Since $\ell_{\zeta} (x,y)=n$, we have that $\ell(x, \zeta (y))\not\leq n-1$. As $x\in \mathrm{Max}(P)$ and $\zeta(y)\in \mathrm{Min}(P)$ and $n$ is even we cannot have $\ell(x, \zeta (y))=n$. So $\ell(x, \zeta (y))\not\leq n$. Applying (3) of Lemma \ref{C3A},
there exists a clopen decreasing set $X$ such that
 $\ell(x, ( P\setminus (X\cap X^{\prime \ast})))=n$. For such an $X$, we have $\ell(\zeta(x), \zeta ( P\setminus (X\cap X^{\prime \ast})))=n$. Therefore, by Lemma \ref{C2}, $\zeta (x)\in (X\cap X^{\prime \ast})^{(n-1)(\prime \ast)}$, whilst
$\zeta (x)\not\in (X\cap X^{\prime \ast})^{(n+1)(\prime \ast)}$.
In particular, $(X\cap X^{\prime \ast})^{(n-1)(\prime \ast)}\neq (X\cap X^{\prime \ast})^{(n+1)(\prime \ast)}$.
It follows that $(X\cap X^{\prime \ast})^{(n-1)(\prime \ast)}\neq (X\cap X^{\prime \ast})^{n(\prime \ast)}$,
that is $L$ does not satisfy the identity $(x\wedge x^{\prime \ast})^{(n-1)(\prime \ast)} \approx (x\wedge x^{\prime \ast})^{n(\prime \ast)}$.
\end{proof}

\begin{Corollary} \label{C4B} Let $L$ be a regular $pm$-algebra and $P$ be its dual $pm$-space. Then $L$ is of range $n<\omega$ if and only if   $P$ is of $\zeta$-width $n$. 
\end{Corollary}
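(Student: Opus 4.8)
The plan is to read the statement as two implications and to recognise that each is essentially already packaged by the two preceding theorems, so that the work reduces to bookkeeping. Throughout, $L$ is a regular $pm$-algebra and $P$ is its dual $pm$-space; by Theorem \ref{C1}, $P$ has height at most $1$, so that both Theorem \ref{C3} and Theorem \ref{C4} are available.

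For the implication ``$P$ of $\zeta$-width $n$ $\Rightarrow$ $L$ of range $n$'' there is nothing to add: this is precisely the content of Theorem \ref{C3}. I would dispose of this direction in a single line by citing it.

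The converse, ``$L$ of range $n$ $\Rightarrow$ $P$ of $\zeta$-width $n$'', I would prove by contraposition, feeding the failure of $\zeta$-width into Theorem \ref{C4}. Assume $P$ is not of $\zeta$-width $n$; then by definition there are $x,y\in P$ with $\ell_{\zeta}(x,y)$ finite and strictly greater than $n$, say $\ell_{\zeta}(x,y)=m$ with $n<m<\omega$ (so in particular $1\leq m$). Theorem \ref{C4}, applied with $m$ in the role of its $n$, then tells us that $L$ fails the identity $(x\wedge x^{\prime \ast})^{(m-1)(\prime \ast)} \approx (x\wedge x^{\prime \ast})^{m(\prime \ast)}$.

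The one genuine step is to bridge from this failure at level $m-1$ to the failure of the range-$n$ identity $(x\wedge x^{\prime \ast})^{n(\prime \ast)} \approx (x\wedge x^{\prime \ast})^{(n+1)(\prime \ast)}$. Here I would invoke the elementary monotonicity of the range condition already noted in the introduction: if $L$ is of range $n$, then it is of range $k$ for every $k\geq n$, since once $(x\wedge x^{\prime \ast})^{n(\prime \ast)} = (x\wedge x^{\prime \ast})^{(n+1)(\prime \ast)}$ one applies $^{\prime \ast}$ repeatedly to stabilise the descending chain. Because $m-1\geq n$, being of range $n$ would in particular force being of range $m-1$, i.e. exactly the identity that Theorem \ref{C4} rules out. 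This contradiction shows that $L$ is not of range $n$, completing the contrapositive. The main (and only mild) obstacle is simply making the index arithmetic line up, namely observing that $m>n$ gives $m-1\geq n$, so that the range-$n$ hypothesis really does deliver the level-$(m-1)$ equality that Theorem \ref{C4} forbids.
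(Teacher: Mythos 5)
Your proposal is correct and follows essentially the same route as the paper: the converse is dispatched by citing Theorem \ref{C3}, and the forward direction applies Theorem \ref{C4} to a pair at finite $\zeta$-distance exceeding $n$, using the monotonicity of range ($k-1\geq n$) to contradict the range-$n$ hypothesis. The paper phrases this direction as a direct contradiction rather than a contrapositive, but the argument is identical.
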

\begin{proof} Suppose $L$ is of range $n$. Suppose $x,y\in P$ such that $\ell_{\zeta}(x,y)$ is finite. If $\ell_{\zeta}(x,y)=k>n$, then, by Theorem \ref{C4}, $L$ does not satisfy the identity $(x\wedge x^{\prime \ast})^{(k-1)(\prime \ast)} \approx (x\wedge x^{\prime \ast})^{k(\prime \ast)}$ and, as $k-1\geq n$, $L$ is not of range $n$, a contradiction. So, $\ell_{\zeta}(x,y)\leq n$. Thus, $P$ is of $\zeta$-width $n$. For the converse just apply Theorem \ref{C3}.
\end{proof}

\section{Subdirectly irreducible algebras and local finiteness}\label{SD}

As we have already mentioned, for each $n<\omega$, the variety $\mathbf{M}_n$ of regular $pm$-algebras of range $n$ is a discriminator variety and, consequently, its sudirectly irreducible algebras are the simple ones. We begin by characterising the simple algebras of $\mathbf{M}_n$, $n<\omega$, via their dual spaces. For this purpose, the following theorem will be useful.\\

\begin{Theorem} \label{X4} Let $L\in \mathbf{M}_n$ and $P$ be its dual $pm$-space. Then all order components of $P$ are closed.
\end{Theorem}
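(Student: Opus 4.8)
The plan is to exhibit each order component $Q$ of $P$ as a union of at most two of the closed ``balls'' $D_n(\cdot)$ supplied by Lemma \ref{C3A}. Since $L\in\mathbf{M}_n$, Theorem \ref{C1} gives that $P$ has height at most $1$, so $P=\mathrm{Min}(P)\cup\mathrm{Max}(P)$, and Corollary \ref{C4B} gives that $P$ is of $\zeta$-width $n$. The first observation is that, because every point of $P$ lies in $\mathrm{Min}(P)$ or in $\mathrm{Max}(P)$, parts (1) and (2) of Lemma \ref{C3A} guarantee that $D_n(w)$ is closed for \emph{every} $w\in P$; in particular both $D_n(x_0)$ and $D_n(\zeta(x_0))$ are closed for any chosen $x_0$, since $\zeta(x_0)$ is again minimal or maximal as $\zeta$ is an order-reversing involution.

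Next I would fix $x_0\in Q$ and prove the key containment $Q\subseteq D_n(x_0)\cup D_n(\zeta(x_0))$. Indeed, for $z\in Q=Q_{x_0}$ the $\zeta$-width bound yields $\ell_{\zeta}(x_0,z)\leq n$, that is, $\ell(x_0,z)\leq n$ or $\ell(x_0,\zeta(z))\leq n$; using the identity $\ell(x_0,\zeta(z))=\ell(\zeta(x_0),z)$ the second alternative reads $z\in D_n(\zeta(x_0))$, so in either case $z\in D_n(x_0)\cup D_n(\zeta(x_0))$. Conversely $D_n(x_0)\subseteq Q_{x_0}=Q$, since any $z$ with $0<\ell(z,x_0)\leq n$ lies in the same order component as $x_0$.

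Finally I would pin down $Q$ exactly by splitting into the dichotomy recorded earlier, namely $Q=\zeta(Q)$ or $Q\cap\zeta(Q)=\emptyset$. If $Q\cap\zeta(Q)=\emptyset$, then $D_n(\zeta(x_0))\subseteq\zeta(Q)$ is disjoint from $Q$, so the key containment collapses to $Q=D_n(x_0)$, which is closed. If $Q=\zeta(Q)$, then $D_n(\zeta(x_0))\subseteq Q_{\zeta(x_0)}=\zeta(Q)=Q$, so $D_n(x_0)\cup D_n(\zeta(x_0))\subseteq Q$ and hence $Q=D_n(x_0)\cup D_n(\zeta(x_0))$, again closed. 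The step I expect to be the crux is this case analysis: when $Q=\zeta(Q)$ the ordinary diameter of $Q$ need not be bounded by $n$, since the $\zeta$-width controls only $\min\{\ell(x,y),\ell(x,\zeta(y))\}$, so $Q$ cannot in general be written as a single ball $D_n(x_0)$. One genuinely needs the two-ball description together with the fact that $\zeta(Q)=Q$ to keep both balls inside $Q$.
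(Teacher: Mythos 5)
Your proposal is correct and follows essentially the same route as the paper's proof: both use Corollary \ref{C4B} to get $\zeta$-width $n$, split into the cases $Q\cap\zeta(Q)=\emptyset$ and $Q=\zeta(Q)$, and realise $Q$ as $D_n(x)$ in the first case and as $D_n(x)\cup D_n(\zeta(x))$ in the second, with closedness supplied by parts (1) and (2) of Lemma \ref{C3A}. The only cosmetic difference is that you establish the containment $Q\subseteq D_n(x_0)\cup D_n(\zeta(x_0))$ once and then specialise, whereas the paper runs the two cases separately; the substance is identical.
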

\begin{proof}
Suppose that $L\in \mathbf{M}_n$. Then, by Corollary \ref{C4B}, $P$ is of $\zeta$-width $n$. So, for all $x,y\in P$ such that $\ell_{\zeta}(x,y)$ is finite, we have $\ell_{\zeta}(x,y)\leq n$.\\

Let $Q$ be an order component of $P$ and $x\in Q$. Obviously, $D_n(x)\subseteq Q$. We know that $Q\cap\zeta(Q)=\emptyset$ or $Q= \zeta(Q)$.  So, we have two cases to consider.  Firstly,  suppose that $Q\cap\zeta(Q)=\emptyset$. Let $y\in Q$. We have that $\ell(x,y)$ is finite and $\ell(x,\zeta(y))$ is infinite, so $\ell(x,y)=\ell_{\zeta}(x,y)\leq n$ and, consequently, $y\in D_n(x)$. Thus, $Q=D_n(x)$, which is closed in $P$, by (1) and (2) of Lemma \ref{C3A}. Secondly, suppose that $Q= \zeta(Q)$. Consider $D_n(x)$ and $D_n(\zeta(x))$. Obviously, $D_n(x)\subseteq Q$ and, since $\zeta(x)\in Q$, we also have $D_n(\zeta(x))\subseteq Q$. Thus, $D_n(x)\cup D_n(\zeta(x))\subseteq Q$.
Let $y\in Q$. As  $Q= \zeta(Q)$, we have that both $\ell(y,x)$ and $\ell(y,\zeta(x))$ are finite and, since $\ell_{\zeta}(y,x)\leq n$,  at least, one of them does not exceed $n$. Thus, $y\in D_n(x)\cup D_n(\zeta(x))$ and we have $Q=D_n(x)\cup D_n(\zeta(x))$. By (1) and (2) of Lemma \ref{C3A}, $Q$ is closed in $P$. 
\end{proof}

\begin{Theorem} \label{X5}  Let $L\in \mathbf{M}_n$ and $P$ be its dual $pm$-space.
Then the following are equivalent:\\
{\rm (1)} $L$ is simple,\\
{\rm (2)}  $P=Q\cup\zeta (Q)$ where $Q$ is an order component of $P$. 
\end{Theorem}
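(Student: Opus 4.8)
The plan is to translate simplicity into the congruence correspondence recorded in Section~\ref{SB}: for a regular $pm$-algebra, $\mathrm{Con}(L)$ is isomorphic to the lattice of open sets $X\subseteq P$ satisfying $\zeta(X)=X$, $[\mathrm{Min}(P)\cap X)\subseteq X$ and $(\mathrm{Max}(P)\cap X]\subseteq X$; I will call these the \emph{admissible} sets. Under this isomorphism $\emptyset$ and $P$ correspond to the least and the greatest congruence, so $L$ is simple precisely when $\emptyset$ and $P$ are the only admissible sets and $P\neq\emptyset$. The first step is a lemma: \emph{every admissible set is a $\zeta$-invariant union of order components}. Since $L\in\mathbf{M}_n$ has height at most $1$ (Theorem~\ref{C1}), any two distinct comparable points form a minimal--maximal pair; hence if $x\in X$ and $\ell(x,y)=1$, then applying $[\mathrm{Min}(P)\cap X)\subseteq X$ (when $x<y$) or $(\mathrm{Max}(P)\cap X]\subseteq X$ (when $x>y$) yields $y\in X$. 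Inducting on the length of a shortest path in the comparability graph then shows $Q_x\subseteq X$ whenever $x\in X$, which proves the lemma.

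For the implication $(2)\Rightarrow(1)$, I would assume $P=Q\cup\zeta(Q)$ with $Q$ an order component. Recalling from Section~\ref{SC} that $\zeta(Q)$ is again an order component, the order components of $P$ are exactly those among $Q$ and $\zeta(Q)$. By the lemma, any admissible set is a $\zeta$-invariant union of these; a nonempty such union contains $Q$ or $\zeta(Q)$ and, being $\zeta$-invariant, must equal $Q\cup\zeta(Q)=P$. Thus $\emptyset$ and $P$ are the only admissible sets, and since $Q\neq\emptyset$ we have $P\neq\emptyset$, whence $L$ is simple.

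For $(1)\Rightarrow(2)$, I would assume $L$ is simple, so $P\neq\emptyset$, and fix $x\in P$, putting $Y=Q_x\cup\zeta(Q_x)$. The decisive move is to work not with $Y$ itself but with its complement $Z=P\setminus Y$, thereby avoiding any need to prove that order components are open. By Theorem~\ref{X4} the order components $Q_x$ and $\zeta(Q_x)$ are closed, so $Y$ is closed and $Z$ is open. Moreover $Z$ is $\zeta$-invariant and is precisely the union of all order components distinct from $Q_x$ and $\zeta(Q_x)$; the same adjacency argument used in the lemma gives $[\mathrm{Min}(P)\cap Z)\subseteq Z$ and $(\mathrm{Max}(P)\cap Z]\subseteq Z$, so $Z$ is admissible. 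Since $x\in Y$ we have $Z\neq P$, and simplicity forces $Z=\emptyset$; that is, $P=Y=Q_x\cup\zeta(Q_x)$, as required.

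The routine verifications are the $\zeta$-invariance and the $\mathrm{Min}/\mathrm{Max}$ closure of a union of order components, both of which follow from the height-$1$ adjacency observation. The one genuine idea, and the step I expect to be the main obstacle, is recognising that Theorem~\ref{X4} (closedness of the order components) renders the \emph{complement} $Z$ open, which is exactly what makes direction $(1)\Rightarrow(2)$ go through without having to establish that an individual order component, or the set $Q_x\cup\zeta(Q_x)$, is open.
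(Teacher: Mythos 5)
Your proof is correct and follows essentially the same route as the paper: for $(1)\Rightarrow(2)$ the paper likewise passes to the complement $P\setminus(Q\cup\zeta(Q))$, invokes Theorem~\ref{X4} to get openness, and checks it is an admissible congruence-set; for $(2)\Rightarrow(1)$ the paper's observation that a nontrivial congruence-set is increasing, decreasing and $\zeta$-invariant (hence swallows a whole component and then all of $P$) is exactly your lemma that admissible sets are $\zeta$-invariant unions of order components. No gaps; the packaging differs only cosmetically.
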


\begin{proof}
As we have already observed in Section \ref{SB} there is a lattice-isomorphism from the lattice of congruences on $L$ to the lattice of open sets $X$ of $P$ such that $\zeta(X)=X$ and $[{\rm Min}(P)\cap X)\subseteq X$. Moreover any such open set also satisfies $({\rm Max}(P)\cap X]\subseteq X$.\\

Suppose that $L$ is simple. Consider $Q\cup\zeta(Q)$, where $Q$ is an order component of $P$. Suppose $P\neq Q\cup\zeta(Q)$. Let $X=P\setminus (Q\cup\zeta(Q))$.  We wish to show that $X$ represents a congruence on $P$.  Observe that $X$ is an open set, 
since both $Q$ and $\zeta(Q)$ are order components and hence closed, by the previous theorem. Obviously, $\zeta(X)=X$. Let  $y\in [{\rm Min}(P)\cap X)$. Then $y\geq x$, for some $x\in {\rm Min}(P)\cap X$. If $y\in Q\cup\zeta (Q)$, so would $x$, since $Q$ and $\zeta (Q)$ are order components. So $y\in P\setminus (Q\cup\zeta(Q))= X$. Thus $[{\rm Min}(P)\cap X)\subseteq X$ and $X$ represents a congruence on $L$. Since $X\neq\emptyset$ and $X\neq P$, we have a contradiction to the fact that $L$ is simple.\\

Conversely, suppose that $P=Q\cup\zeta (Q)$ for an order component $Q$. Let $\theta$ be a congruence on $L$ different from the equality congruence and let $X$ be the open set that represents $\theta$. Then $X\neq \emptyset$, $\zeta(X)=X$, $[{\rm Min}(P)\cap X)\subseteq X$ and $({\rm Max}(P)\cap X]\subseteq X$.  Since $P={\rm Min}(P)\cup {\rm Max}(P)$, we conclude that $X$ is increasing and decreasing. From $\zeta(X)=X\neq\emptyset$ and $P=Q\cup\zeta(Q)$, we have  that $\emptyset\neq X=(Q\cap X)\cup (\zeta(Q)\cap X) = (Q\cap X)\cup (\zeta(Q)\cap \zeta(X))=(Q\cap X)\cup \zeta(Q\cap X)$  and hence $Q\cap X\neq\emptyset$. Thus $Q\subseteq X$, since $X$ is both increasing and decreasing, 
and we conclude that $P= Q\cup\zeta(Q)\subseteq X\cup\zeta(X)=X$. So $\theta$ is the universal congruence. 
\end{proof}

\begin{Corollary}\label{X6} Let $L$ be a non-trivial regular $pm$-algebra and $P$ be its dual $pm$-space. Then $L$ is a simple algebra of $\mathbf{M}_n$ if and only if $\ell(x,y)\leq n$ or $\ell(x,\zeta(y))\leq n$, for all $x,y\in P$.
\end{Corollary}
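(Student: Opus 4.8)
The plan is to observe that the pointwise condition ``$\ell(x,y)\le n$ or $\ell(x,\zeta(y))\le n$ for all $x,y\in P$'' is precisely the statement that $\ell_{\zeta}(x,y)\le n$ for all $x,y\in P$, and then to recognise that this single inequality simultaneously encodes two separate facts: that $P$ has $\zeta$-width $n$, and that every $\zeta$-distance in $P$ is finite. The first fact will be converted, via Corollary~\ref{C4B}, into the statement that $L$ is of range $n$ (hence $L\in\mathbf{M}_n$, since $L$ is already regular); the second will be shown to be equivalent to $P=Q\cup\zeta(Q)$ for an order component $Q$, which by Theorem~\ref{X5} is exactly simplicity. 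Combining the two yields ``$L$ is a simple algebra of $\mathbf{M}_n$''.

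For the forward implication, suppose $L$ is a simple algebra of $\mathbf{M}_n$. Since $L\in\mathbf{M}_n$, Corollary~\ref{C4B} gives that $P$ is of $\zeta$-width $n$, and since $L$ is simple, Theorem~\ref{X5} gives $P=Q\cup\zeta(Q)$ for some order component $Q$. I would then fix arbitrary $x,y\in P$ and show directly that $\ell_{\zeta}(x,y)$ is finite: writing each of $x,y$ as lying in $Q$ or in $\zeta(Q)$ and using that $\zeta$ is an involution carrying $Q$ onto $\zeta(Q)$, one checks that in every case at least one of $\ell(x,y)$, $\ell(x,\zeta(y))$ is finite (either both points lie in a common component, or a point and the $\zeta$-image of the other do). Finiteness of $\ell_{\zeta}(x,y)$ together with $\zeta$-width $n$ then gives $\ell_{\zeta}(x,y)\le n$, i.e. $\ell(x,y)\le n$ or $\ell(x,\zeta(y))\le n$.

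For the converse, assume $\ell_{\zeta}(x,y)\le n$ for all $x,y\in P$. In particular every $\zeta$-distance is finite and bounded by $n$, so $P$ is of $\zeta$-width $n$; by Corollary~\ref{C4B} and regularity of $L$ we obtain $L\in\mathbf{M}_n$. To secure simplicity I would invoke non-triviality to fix an order component $Q$ and a point $x\in Q$. For any $y\in P$, finiteness of $\ell_{\zeta}(x,y)$ means $\ell(x,y)$ or $\ell(x,\zeta(y))$ is finite; recalling that the $\ell$-distance is finite exactly within a single order component and that $\zeta(Q_y)=Q_{\zeta(y)}$, this forces $Q_y=Q$ (so $y\in Q$) or $\zeta(Q_y)=Q$, equivalently $Q_y=\zeta(Q)$ (so $y\in\zeta(Q)$). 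Hence $P=Q\cup\zeta(Q)$, and Theorem~\ref{X5} makes $L$ simple in $\mathbf{M}_n$.

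The only step requiring genuine care is the translation ``all $\zeta$-distances finite $\iff$ $P=Q\cup\zeta(Q)$'', for which I must use that the order components partition $P$ and that $\zeta$ permutes them; the role of the non-triviality hypothesis is also worth flagging, since for $P=\emptyset$ the displayed inequality holds vacuously yet $L$ would be trivial and hence not simple. Everything else is a direct appeal to Corollary~\ref{C4B} and Theorem~\ref{X5}.
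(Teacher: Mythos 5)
Your proposal is correct and follows essentially the same route as the paper: both directions rest on Corollary~\ref{C4B} (range $n$ $\Leftrightarrow$ $\zeta$-width $n$) and Theorem~\ref{X5} (simplicity $\Leftrightarrow$ $P=Q\cup\zeta(Q)$), with the identical case analysis on order components to pass between ``all $\zeta$-distances finite'' and ``$P=Q\cup\zeta(Q)$''. Your explicit flagging of the non-triviality hypothesis (needed to pick a point of $P$ in the converse) is a point the paper leaves implicit, but it is the same argument.
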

\begin{proof} Suppose $L$ is a simple algebra of $\mathbf{M}_n$. Then, by Corollary \ref{C4B} and Theorem \ref{X5}, $P$ is of $\zeta$-width $n$ and $P=Q\cup\zeta(Q)$ with $Q$ order component. The set $\zeta(Q)$ is also an order component. Let $x,y\in P$. If $x,y\in Q$ or $x,y\in\zeta (Q)$, then $\ell(x,y)$ is finite. Otherwise $\ell(x,\zeta(y))$ is finite, since both elements will belong to the same order component. Either way $\ell_{\zeta}(x,y)$ is finite and then, as $P$ is of $\zeta$-width $n$, $\ell_{\zeta}(x,y)\leq n$, that is $\ell(x,y)\leq n$ or $\ell(x,\zeta(y))\leq n$.\\

Conversely, suppose that, for all $x,y\in P$, $\ell(x,y)\leq n$ or $\ell(x,\zeta(y))\leq n$. It is obvious that $P$ is of $\zeta$-width $n$ and then $L\in\mathbf{M}_n$, by Corollary \ref{C4B}. Let $x\in P$ and let $Q_x$ be the order component such that $x\in Q_x$. For any $y\in P$, we have $\ell(x,y)\leq n$ or $\ell(x,\zeta(y))\leq n$ and, consequently, $y\in Q_x$ or $y\in\zeta(Q_x)$. Thus $P=Q_x\cup\zeta(Q_x)$ and $L$ is simple, by the previous theorem.
\end{proof}

In Figure $1$, clouds denote order components.\\

\begin{figure}[ht]
\begin{center}
\unitlength 1mm 
\linethickness{0.4pt}
\ifx\plotpoint\undefined\newsavebox{\plotpoint}\fi 
\begin{picture}(141.25,119.75)(0,0)
\put(24,110.25){\circle*{2.5}}
\put(60.25,110.5){\circle*{2.5}}
\put(122.25,115.75){\circle*{2.5}}
\put(12.25,79){\circle*{2.5}}
\put(67.5,79.25){\circle*{2.5}}
\put(48.25,34.75){\circle*{2.5}}
\put(25.5,35){\circle*{2.5}}
\put(117.75,79.75){\circle*{2.5}}
\put(112.5,35.5){\circle*{2.5}}
\put(89.75,35.25){\circle*{2.5}}
\put(33.25,78.75){\circle*{2.5}}
\put(78.75,79){\circle*{2.5}}
\put(59.5,34.5){\circle*{2.5}}
\put(36.75,34.75){\circle*{2.5}}
\put(129,79.5){\circle*{2.5}}
\put(123.75,35.25){\circle*{2.5}}
\put(101,35.5){\circle*{2.5}}
\put(81.5,110.5){\circle*{2.5}}
\put(122.25,104){\circle*{2.5}}
\put(12.25,67.25){\circle*{2.5}}
\put(67.5,67.5){\circle*{2.5}}
\put(48.25,23){\circle*{2.5}}
\put(25.5,23.25){\circle*{2.5}}
\put(117.75,68){\circle*{2.5}}
\put(112.5,23.75){\circle*{2.5}}
\put(89.75,23.5){\circle*{2.5}}
\put(33.25,67){\circle*{2.5}}
\put(78.75,67.25){\circle*{2.5}}
\put(59.5,22.75){\circle*{2.5}}
\put(36.75,23){\circle*{2.5}}
\put(129,67.75){\circle*{2.5}}
\put(123.75,23.5){\circle*{2.5}}
\put(101,23.75){\circle*{2.5}}
\put(23.625,109.875){\oval(19.25,15.25)[]}
\put(59.875,110.125){\oval(19.25,15.25)[]}
\put(81.125,110.125){\oval(19.25,15.25)[]}
\put(55.5,110.25){\makebox(0,0)[cc]{$x$}}
\put(117.5,115.5){\makebox(0,0)[cc]{$\zeta (x)$}}
\put(7.5,78.75){\makebox(0,0)[cc]{$\zeta(y)$}}
\put(62.75,79){\makebox(0,0)[cc]{$\zeta(y)$}}
\put(113,79.5){\makebox(0,0)[cc]{$\zeta (x)$}}
\put(28.5,78.5){\makebox(0,0)[cc]{$\zeta (x)$}}
\put(83.5,78.75){\makebox(0,0)[cc]{$\zeta(x)$}}
\put(133.75,79.25){\makebox(0,0)[cc]{$\zeta(y)$}}
\put(76.75,110.25){\makebox(0,0)[cc]{$\zeta (x)$}}
\put(117.5,103.75){\makebox(0,0)[cc]{$x$}}
\put(7.5,67){\makebox(0,0)[cc]{$x$}}
\put(62.75,67.25){\makebox(0,0)[cc]{$x$}}
\put(113,67.75){\makebox(0,0)[cc]{$x$}}
\put(28.5,66.75){\makebox(0,0)[cc]{$y$}}
\put(83.5,67){\makebox(0,0)[cc]{$y$}}
\put(133.75,67.5){\makebox(0,0)[cc]{$y$}}
\multiput(122,116.5)(.03125,-1.53125){8}{\line(0,-1){1.53125}}
\multiput(12,79.75)(.03125,-1.53125){8}{\line(0,-1){1.53125}}
\multiput(67.25,80)(.03125,-1.53125){8}{\line(0,-1){1.53125}}
\multiput(117.5,80.5)(.03125,-1.53125){8}{\line(0,-1){1.53125}}
\multiput(33,79.5)(.03125,-1.53125){8}{\line(0,-1){1.53125}}
\multiput(78.5,79.75)(.03125,-1.53125){8}{\line(0,-1){1.53125}}
\multiput(128.75,80.25)(.03125,-1.53125){8}{\line(0,-1){1.53125}}
\put(122,108.75){\oval(17,22)[]}
\put(12,72){\oval(17,22)[]}
\put(33,71.75){\oval(17,22)[]}
\put(73.25,71.625){\oval(35.5,25.25)[]}
\put(123.5,72.125){\oval(35.5,25.25)[]}
\put(67,79.25){\line(1,-1){11.75}}
\multiput(118,80.25)(.0336538462,-.0384615385){312}{\line(0,-1){.0384615385}}
\multiput(128.75,80.25)(-.0336391437,-.0382262997){327}{\line(0,-1){.0382262997}}
\put(74.25,34.75){\makebox(0,0)[cc]{$\ldots$}}
\put(74.75,22.25){\makebox(0,0)[cc]{$\ldots$}}
\multiput(25.5,35)(.0336826347,-.0366766467){334}{\line(0,-1){.0366766467}}
\multiput(36.75,22.75)(.0337243402,.0359237537){341}{\line(0,1){.0359237537}}
\multiput(48.25,35)(.0336391437,-.0359327217){327}{\line(0,-1){.0359327217}}
\multiput(59.25,23.25)(-.0646067416,.0337078652){356}{\line(-1,0){.0646067416}}
\multiput(36.25,35.25)(-.0336990596,-.0368338558){319}{\line(0,-1){.0368338558}}
\multiput(25.5,23.5)(.0659025788,.0336676218){349}{\line(1,0){.0659025788}}
\multiput(59.5,34.5)(-.0336826347,-.0344311377){334}{\line(0,-1){.0344311377}}
\put(48.25,23){\line(-1,1){12}}
\multiput(36.25,23.25)(.0681818182,.0337243402){341}{\line(1,0){.0681818182}}
\multiput(59.5,34.75)(-.0995702006,-.0336676218){349}{\line(-1,0){.0995702006}}
\multiput(89.25,35.25)(.03125,-1.4375){8}{\line(0,-1){1.4375}}
\multiput(89.5,23.75)(.0337243402,.0359237537){341}{\line(0,1){.0359237537}}
\multiput(101.25,24.25)(.0336826347,.0351796407){334}{\line(0,1){.0351796407}}
\multiput(112.5,36)(-.03125,-1.46875){8}{\line(0,-1){1.46875}}
\put(118.75,24.25){\line(0,-1){.5}}
\multiput(112.25,23.75)(.0336826347,.0351796407){334}{\line(0,1){.0351796407}}
\multiput(123.5,35.5)(-.03125,-1.40625){8}{\line(0,-1){1.40625}}
\multiput(123.25,24.25)(-.0336990596,.0352664577){319}{\line(0,1){.0352664577}}
\multiput(112.5,35.5)(-.0659025788,-.0336676218){349}{\line(-1,0){.0659025788}}
\multiput(89.5,23.75)(.0981375358,.0336676218){349}{\line(1,0){.0981375358}}
\multiput(123.75,35.5)(-.0651862464,-.0336676218){349}{\line(-1,0){.0651862464}}
\multiput(101,23.75)(-.0337243402,.0351906158){341}{\line(0,1){.0351906158}}
\multiput(89.5,35.75)(.0653089888,-.0337078652){356}{\line(1,0){.0653089888}}
\multiput(112.75,23.75)(-.0337078652,.0344101124){356}{\line(0,1){.0344101124}}
\multiput(100.75,36)(.0639044944,-.0337078652){356}{\line(1,0){.0639044944}}
\multiput(123.5,24)(-.0962078652,.0337078652){356}{\line(-1,0){.0962078652}}
\multiput(89.25,36)(-.043269231,-.033653846){104}{\line(-1,0){.043269231}}
\put(91.25,32.5){\line(0,1){0}}
\multiput(84.75,27)(.05487805,-.03353659){82}{\line(1,0){.05487805}}
\multiput(59.25,35)(.033505155,-.036082474){97}{\line(0,-1){.036082474}}
\multiput(59,23)(.036082474,.033505155){97}{\line(1,0){.036082474}}
\multiput(54.75,23.5)(-.05,-.0333333){15}{\line(-1,0){.05}}
\multiput(47.5,23)(-.0599730458,.0336927224){371}{\line(-1,0){.0599730458}}
\multiput(25.25,35.5)(.0906084656,-.0337301587){378}{\line(1,0){.0906084656}}
\put(73.875,28.5){\oval(119.25,26)[]}
\multiput(101,35.75)(-.03125,-1.53125){8}{\line(0,-1){1.53125}}
\put(42.5,38.80){\makebox(0,0)[cc]{$\zeta(I)$}}
\put(107,38.80){\makebox(0,0)[cc]{$\zeta(S\setminus I)$}}
\put(107,19){\makebox(0,0)[cc]{$S\setminus I$}}
\put(42.25,19){\makebox(0,0)[cc]{$I$}}
\put(23,93.25){\makebox(0,0)[cc]{$Q_0$}}
\put(71,93.5){\makebox(0,0)[cc]{$Q_1$}}
\put(122.5,94){\makebox(0,0)[cc]{$Q_2$}}
\put(22.25,51.5){\makebox(0,0)[cc]{$Q_3$}}
\put(73.75,52){\makebox(0,0)[cc]{$Q_4$}}
\put(123.75,52.75){\makebox(0,0)[cc]{$Q_5$}}
\put(73,8.5){\makebox(0,0)[cc]{$Q_6(I,S)$}}
\put(73,-5.5){\makebox(0,0)[cc]{Figure 1}}
\end{picture}

\end{center}
\label{fig 1}
\end{figure}

 The next theorem gives a description, via their dual spaces, of the non-trivial algebras in $\mathbf{M}_0$, leading to the description of its simple algebras presented in Corollary \ref{D1A}.

\begin{Theorem}\label{D1} Let $L$ be a non-trivial $pm$-algebra and $P$ be its dual $pm$-space. Then $L\in \mathbf{M}_0$ if and only if, for every order component $Q\subseteq P$, $Q\cup \zeta (Q)$ is a copy of one of $Q_0$, $Q_1$, or $Q_2$ {\rm (}as diagrammed in {\rm Figure $1$)}.
\end{Theorem}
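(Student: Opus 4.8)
The plan is to reduce the membership $L \in \mathbf{M}_0$ to two purely order-theoretic conditions on the dual space and then read off the three possible shapes of an order component. First I would invoke Theorem \ref{C1} together with Corollary \ref{C4B} to record that, for a non-trivial $pm$-algebra $L$ with dual space $P$, one has $L \in \mathbf{M}_0$ if and only if $P$ has height at most $1$ (equivalently $L$ is regular) and $P$ is of $\zeta$-width $0$. The whole argument then takes place entirely inside the $pm$-space $P$.

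The observation driving everything is the meaning of $\zeta$-width $0$: since $\ell_\zeta(x,y) = \min\{\ell(x,y), \ell(x,\zeta(y))\}$ and $\ell(u,v) = 0$ exactly when $u = v$, the space $P$ is of $\zeta$-width $0$ precisely when, for all $x, y \in P$, finiteness of $\ell_\zeta(x,y)$ forces $\ell_\zeta(x,y) = 0$, i.e. $x = y$ or $x = \zeta(y)$. I would then fix an order component $Q$ and split according to the dichotomy already recorded in Section \ref{SC}, namely that either $Q = \zeta(Q)$ or $Q \cap \zeta(Q) = \emptyset$. For any two $x, y \in Q$ the distance $\ell(x,y)$ is finite, so $\ell_\zeta(x,y)$ is finite and hence $0$; thus $x = y$ or $x = \zeta(y)$. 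This immediately bounds $|Q| \le 2$ and pins down the $\zeta$-action: if $Q \cap \zeta(Q) = \emptyset$ then $x = \zeta(y)$ is impossible for $x, y \in Q$, so $Q$ is a singleton $\{x\}$ with $\zeta(x) \notin Q$, giving the two-point antichain $\{x, \zeta(x)\}$, a copy of $Q_1$; if $Q = \zeta(Q)$ and $|Q| = 1$ then $Q = \{x\}$ with $\zeta(x) = x$, a copy of $Q_0$; and if $Q = \zeta(Q)$ with $|Q| = 2$ then $Q = \{x, \zeta(x)\}$ with $x \neq \zeta(x)$, and since the two points lie in one order component of a height-$\le 1$ poset they must be comparable, giving the two-element chain that is a copy of $Q_2$. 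This establishes the forward implication.

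For the converse I would assume that every $Q \cup \zeta(Q)$ is a copy of $Q_0$, $Q_1$, or $Q_2$ and verify the two conditions from the first paragraph. Height at most $1$ is immediate because each of $Q_0, Q_1, Q_2$ contains no chain of more than two elements and chains never cross order components, so $L$ is regular by Theorem \ref{C1}. For $\zeta$-width $0$, I would take $x, y$ with $\ell_\zeta(x,y)$ finite, so that $y$ or $\zeta(y)$ lies in the order component $Q_x$ of $x$; inspecting the three shapes one checks directly that $y \in \{x, \zeta(x)\}$, whence $x = y$ or $x = \zeta(y)$ and $\ell_\zeta(x,y) = 0$. Corollary \ref{C4B} then gives $L \in \mathbf{M}_0$.

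The main obstacle is bookkeeping rather than a deep idea: one must match each configuration not merely up to poset isomorphism but as a structure carrying the involution $\zeta$ (which point maps to which), and handle correctly the borderline between $Q = \zeta(Q)$ and $Q \cap \zeta(Q) = \emptyset$. In particular one should check that a two-element order component is always comparable, so that it cannot accidentally be the $Q_1$ configuration, and that the $\zeta$-fixed singleton $Q_0$ is genuinely distinct from the two-point antichain $Q_1$.
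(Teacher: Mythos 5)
Your proposal is correct and follows essentially the same route as the paper: both reduce membership in $\mathbf{M}_0$ via Theorem \ref{C1} and Corollary \ref{C4B} to height at most $1$ plus $\zeta$-width $0$, then deduce $|Q|\leq 2$ for each order component $Q$ and identify $Q\cup\zeta(Q)$ with $Q_0$, $Q_1$, or $Q_2$. The only difference is that you spell out the $Q=\zeta(Q)$ versus $Q\cap\zeta(Q)=\emptyset$ dichotomy and the converse verification, which the paper dismisses as straightforward.
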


\begin{proof} Applying Theorem \ref{C1} and Corollary \ref{C4B}, we know that  $L\in \mathbf{M}_0$ if and only $P$ has height at most $1$ and, for any order component $Q$ of $P$ and any $x,y\in Q$, we have $\ell_{\zeta} (x,y) = 0$.
 Suppose $L\in \mathbf{M}_0$. Let $Q$ be an order component of $P$ and $x\in Q$. If $y\in Q$ and $y\neq x$, then, as $\ell_{\zeta} (y,x) = 0$, we have that $\ell (y,\zeta(x)) = 0$, that is $y=\zeta(x)$. So $|Q|\leq 2$, since $\zeta$ is a bijection. Now it is clear that  $Q\cup \zeta (Q)$ is a copy of of $Q_0$ or $Q_1$, if $|Q|=1$, whilst $Q\cup \zeta (Q)$ is a copy of  $Q_2$, if $|Q|=2$.
The converse is straightforward.
 \end{proof}

In \cite[Theorem 6.5]{Sa87a}, Sankappanavar described, up to isomorphism, the $6$ subdirectly irreducible pseudocomplemented de Morgan algebras of range $0$. Exactly $3$ of them are regular. So  $\mathbf{M}_0$ has, up to isomorphism, $3$ simple algebras. Their dual spaces are copies of $Q_0$, $Q_1$, or $Q_2$ as diagrammed in Figure $1$. This fact is also an immediate consequence of Theorems  \ref{X5} and \ref{D1}.

\begin{Corollary}\label{D1A} The simple algebras in $\mathbf{M}_0$ are the ones whose dual $pm$-space is a copy of $Q_0$, $Q_1$, or $Q_2$ {\rm (}as diagrammed in {\rm Figure $1$)}.\hfill{$\Box$}
\end{Corollary}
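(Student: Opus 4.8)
The plan is to read this off directly from the two preceding results, Theorem \ref{X5} specialised to $n=0$ and Theorem \ref{D1}, keeping in mind the standing convention that simple algebras are non-trivial. No new construction is required; the corollary is purely a matter of combining these two characterisations.

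For the forward implication, I would start from a simple algebra $L$ of $\mathbf{M}_0$. Applying Theorem \ref{X5} with $n=0$, simplicity gives $P = Q\cup\zeta(Q)$ for some order component $Q$ of $P$. Since $L$ is non-trivial and lies in $\mathbf{M}_0$, Theorem \ref{D1} applies to this very component $Q$ and tells us that $Q\cup\zeta(Q)$ is a copy of one of $Q_0$, $Q_1$, or $Q_2$. As $P = Q\cup\zeta(Q)$, the dual space $P$ is itself a copy of $Q_0$, $Q_1$, or $Q_2$.

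For the converse, suppose $P$ is a copy of $Q_0$, $Q_1$, or $Q_2$. First I would note that in each of these three diagrams the whole space is a union $Q\cup\zeta(Q)$ of a single order component $Q$ with its $\zeta$-image: in $Q_0$ the unique point is self-dual and forms a one-element order component $Q$ with $\zeta(Q)=Q$; in $Q_1$ the two points are incomparable, so each is its own order component and the space equals $Q\cup\zeta(Q)$ for $Q$ either singleton; in $Q_2$ the two comparable points constitute one order component $Q$ with $\zeta(Q)=Q$. Hence in every case $P = Q\cup\zeta(Q)$ for an order component $Q$, so Theorem \ref{X5} yields that $L$ is simple. Moreover, running over the (one or two) order components in each diagram, the corresponding $Q\cup\zeta(Q)$ is again a copy of $Q_0$, $Q_1$, or $Q_2$, so Theorem \ref{D1} gives $L\in\mathbf{M}_0$ (and $L$ is non-trivial since $P\neq\emptyset$). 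Therefore $L$ is a simple algebra of $\mathbf{M}_0$.

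I do not expect a genuine obstacle here, since the substantive work has already been carried out in Theorems \ref{X5} and \ref{D1}. The only point needing a moment's care is the bookkeeping in the converse, namely verifying that each of $Q_0$, $Q_1$, $Q_2$ really is of the form $Q\cup\zeta(Q)$ for an order component $Q$, so that Theorem \ref{X5} is applicable; this is immediate from the incidence and $\zeta$-action displayed in Figure $1$.
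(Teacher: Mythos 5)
Your proposal is correct and follows exactly the route the paper indicates: the paper presents Corollary \ref{D1A} as an immediate consequence of Theorems \ref{X5} and \ref{D1}, which is precisely the combination you carry out (your converse direction just fills in the easy bookkeeping that $Q_0$, $Q_1$, $Q_2$ each have the form $Q\cup\zeta(Q)$ for an order component $Q$, and that $L\in\mathbf{M}_0$, so that both theorems apply).
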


Notice that the simple algebras in $\mathbf{M}_0$ are just the simple de Morgan algebras with the inherent pseudocomplementation.\\

For a Stone space $(S;\tau )$  and a subset $I$ of the set of its isolated points, let $Q_6(I,S)$ be the quadruple $(S\cup \zeta (S);\tau,\leq,\zeta )$ where $3\leq |S|$, $\zeta (S)$ denotes a homeomorphic copy of $S$, $\tau$ is the union topology, $\zeta (\zeta (x)) = x$ for $x\in S$, and $\leq$ is the partial order on $S\cup \zeta (S)$ induced by, for $x,y\in S$, \[x<\zeta(y) \mbox{ iff } x\neq y \mbox{ or } x\notin I.\]

\begin{Theorem}\label{D1C}  The quadruple $Q_6(I,S)=(S\cup \zeta (S);\tau,\leq,\zeta )$ is a $pm$-space of height $1$.
\end{Theorem}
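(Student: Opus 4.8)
The plan is to verify, in turn, the four ingredients in the definition of a $pm$-space of height $1$: that $(S\cup\zeta(S);\leq)$ is a poset of height $1$, that $(S\cup\zeta(S);\tau,\leq)$ is a Priestley space, that $[X)$ is clopen for every clopen decreasing $X$, and that $\zeta$ is a continuous order-reversing involution. I begin with the order. By construction the only nonreflexive relations are of the form $x<\zeta(y)$ with $x,y\in S$, so $S\subseteq\mathrm{Min}(P)$ and $\zeta(S)\subseteq\mathrm{Max}(P)$. Antisymmetry and transitivity are then immediate, since a strict relation always runs from $S$ to $\zeta(S)$, so two of them cannot be composed and none can be reversed. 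As $|S|\geq 3$ there are distinct $x,y\in S$, whence $x<\zeta(y)$; together with the absence of three-element chains this gives height exactly $1$. I would also record here, for later use, that the defining condition is symmetric: $x<\zeta(y)$ holds if and only if $y<\zeta(x)$ holds, because ``$x\neq y$ or $x\notin I$'' and ``$y\neq x$ or $y\notin I$'' agree, differing only when $x=y$, where both read $x\notin I$.

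Next, the Priestley conditions. Compactness and total disconnectedness of $(P;\tau)$ hold because $P$ is the disjoint union (union topology) of the two Stone spaces $S$ and $\zeta(S)$, each clopen in $P$; in particular a set clopen in $S$ or in $\zeta(S)$ is clopen in $P$. For total order-disconnectedness I argue by cases on a pair $a,b$ with $b\not\leq a$. If $a,b\in S$, a clopen $U\subseteq S$ with $a\in U$ and $b\notin U$ is already decreasing (its members are minimal), so it works. If $a,b\in\zeta(S)$, pick a clopen $V\subseteq\zeta(S)$ separating them and note that $S\cup V$ is clopen and decreasing, contains $a$, and omits $b$. If $a\in S$ and $b\in\zeta(S)$, then $X=S$ is clopen, decreasing, and separates them. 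The only delicate case is $a\in\zeta(S)$, $b\in S$ with $b\not\leq a$, which forces $a=\zeta(b)$ and $b\in I$; here $b$ and hence $\zeta(b)$ are isolated, so $X=\{\zeta(b)\}\cup(S\setminus\{b\})$ is clopen and decreasing, contains $a=\zeta(b)$, and omits $b$. This is exactly where membership in $I$ is needed.

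Then the $p$-space condition. For a clopen decreasing $X$, write $A=X\cap S$ and $B=X\cap\zeta(S)$, both clopen, and compute $[X)=A\cup B\cup\{\zeta(y):\exists x\in A,\ x<\zeta(y)\}$, using that the members of $B$ are maximal. The last set is $\emptyset$ if $A=\emptyset$; it is all of $\zeta(S)$ as soon as $|A|\geq 2$, since then some $x\in A$ differs from any given $y$; and in the remaining case $A=\{x_0\}$ the clopenness of $A$ forces $x_0$ to be isolated, and the set equals $\zeta(S)\setminus\{\zeta(x_0)\}$ if $x_0\in I$ and $\zeta(S)$ if $x_0\notin I$. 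In every case $[X)$ is a union of clopen pieces (here $\{x_0\}$ and $\{\zeta(x_0)\}$ are clopen precisely because $x_0$ is isolated), so $[X)$ is clopen.

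Finally, $\zeta$ is an involution by the defining identity $\zeta(\zeta(x))=x$, it is continuous because it restricts to a homeomorphism between the two clopen halves $S$ and $\zeta(S)$, and it is order-reversing by the symmetry noted above: from $x<\zeta(y)$ we get $y<\zeta(x)$, that is $\zeta(\zeta(y))<\zeta(x)$, which is what must be checked on the only strict relations. Assembling these four verifications shows that $Q_6(I,S)$ is a $pm$-space of height $1$. I expect the main obstacle to be the $p$-space condition, specifically controlling $[X)$ when $A=X\cap S$ is a singleton: this is the point at which the hypothesis that $I$ consists of isolated points, and the asymmetry between $x_0\in I$ and $x_0\notin I$, must be invoked to keep $[X)$ clopen.
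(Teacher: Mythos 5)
Your proof is correct and follows essentially the same route as the paper's: a direct verification of the Priestley, $p$-space, and involution axioms, with the same two crucial case analyses (the point $b\in I$ with $a=\zeta(b)$ in total order-disconnectedness, and the singleton $X\cap S=\{x_0\}$ case, split on $x_0\in I$ versus $x_0\notin I$, in the computation of $[X)$). The only differences are cosmetic — you parametrize the $[X)$ computation by the cardinality of $A=X\cap S$ and note explicitly that clopenness of a singleton forces isolation, while the paper cases directly on the form of $X$ — so there is nothing to add.
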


\begin{proof}
It is immediate that $(S\cup \zeta (S); \leq)$ has height $1$ and that $(S\cup \zeta (S);\tau)$ is a compact Hausdorff space. Moreover, whenever $z$ is an isolated point of the Stone space $S$ or of the Stone space $\zeta(S)$, $z$ is also an isolated point of $Q_6(I,S)$ and, consequently, $\{z\}$ is clopen in $Q_6(I,S)$ and so are $S\setminus \{z\}$ and $\zeta(S)\setminus \{z\}$.\\

Let $x,y\in S\cup \zeta (S)$ be such that $x\not\leq y$.  First, suppose $x\in S$.  Then either $y\in S$ and there is a set $X\subseteq S$ clopen in $S$, and so clopen in $Q_6(I,S)$ and necessarily decreasing,  such that $y\in X$ and $x\not\in X$, or else $y=\zeta(x)$ with $x\in I$. In this latter case, namely $y=\zeta(x)$, $S\setminus \{x\}$ and $\{\zeta(x)\}$ are clopen in $Q_6(I,S)$, since $x$ and $\zeta(x)$ are isolated points. Thus $\{\zeta(x)\}\cup (S\setminus \{x\})$ is clopen decreasing  in $Q_6(I,S)$ and  $x\notin\{\zeta(x)\}\cup (S\setminus \{x\})$.  Next, suppose $x\in\zeta(S)$. Then, since $S$ is clopen decreasing in $Q_6(I,S)$ and $x\notin S$, the case $y\in S$ is obvious, and if $y\in \zeta(S)$, there is a set $X\subseteq \zeta(S)$ clopen in $\zeta(S)$, and so in $Q_6(I,S)$, such that $y\in X$ and $x\not\in X$ from which $y\in X\cup S$, $x\not\in X\cup S$ and $X\cup S$ is a clopen decreasing set of $Q_6(I,S)$. \\

Thus, $(S\cup \zeta (S);\tau,\leq )$ is a Priestley space and it is clear that $\zeta$ is a continuous order-reversing involution.\\

Suppose $X\subseteq S\cup \zeta (S)$ is a clopen decreasing set.  Obviously, if $X=\emptyset$, then $[X)=\emptyset$. If $X = \{ x\}$ for some $x\in I$, then $[X) = \{  x\}\cup (\zeta(S)\setminus \{ \zeta (x)\} )$ which is clopen in $Q_6(I,S)$, since $x$ and $\zeta(x)$ are isolated points.
In the remaining cases for $X$, if $|X\cap S|\geq 2$, or $X=\{x\}$ for some $x\in S\setminus I$, it is obvious that
$[X) = X\cup \zeta(S)$. Observe that whenever $X\cap \zeta(S)\neq \emptyset$, then, as $X$ is decreasing, $|X\cap S|\geq 2$, and, as already argued, $[X) = X\cup \zeta(S)$. So whatever the case, we always have $[X) = X\cup \zeta(S)$, which is clearly clopen in $Q_6(I,S)$.   
\\

Therefore $(S\cup \zeta (S);\tau,\leq,\zeta )$ is a $pm$-space.
\end{proof}

Our next goal is to present a description, via their dual spaces, of the non-trivial $pm$-algebras in  $\mathbf{M}_1$, which will lead to the description of its simple algebras in Corollary \ref{D2A}.

\begin{Lemma}\label{D1B} Let $P$ be a $pm$-space of height at most $1$ and $Q\subseteq P$ be an order component. Then\\
{\rm (1)} $\mathrm{Min}(Q)=\mathrm{Min}(P)\cap Q$ and it is the set of the minimal elements of $Q$. \\
 {\rm (2)} $\mathrm{Max}(Q)=\mathrm{Max}(P)\cap Q$ and it is the set of the maximal elements of $Q$.\\
 {\rm (3)} $Q=\mathrm{Min}(Q)\cup \mathrm{Max}(Q)$, and $\mathrm{Min}(Q)\cap\mathrm{Max}(Q)\neq \emptyset$ if and only if $|Q| = 1$.\\
 {\rm (4)} If $|Q|>1$, then $x\neq\zeta(x)$, for any $x\in Q$.\\
 {\rm (5)} If $P$ is of $\zeta$-width $1$, then $|\mathrm{Min}(Q)|=|\mathrm{Max}(Q)|$ and, if $|\mathrm{Min}(Q)|=|\mathrm{Max}(Q)|>1$, then $Q=\zeta(Q)$.\\
 {\rm (6)} If $P$ is of $\zeta$-width $1$ and $|Q|>2$, then $Q=\zeta(Q)$, $|\mathrm{Min}(Q)|\geq 2$ and, for distinct $x,y$ in $\mathrm{Min}(Q)$, $x<\zeta(y)$.\\
 
 \end{Lemma}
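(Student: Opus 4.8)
The plan is to rest everything on one structural observation. Since $P$ has height at most $1$, every element is either minimal or maximal (a chain $y<x<z$ would have three elements), so $P=\mathrm{Min}(P)\cup\mathrm{Max}(P)$; and an order component $Q$ is automatically both a decreasing and an increasing subset of $P$, because if $x\in Q$ and $y$ is comparable to $x$ then $\ell(x,y)\le 1<\infty$, forcing $y\in Q$. I would also use freely the earlier remark that $\zeta(Q)$ is again an order component, whence $Q\cap\zeta(Q)=\emptyset$ or $Q=\zeta(Q)$, and that $\zeta$ interchanges $\mathrm{Min}(P)$ and $\mathrm{Max}(P)$.

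First I would treat (1) and (2) together. As $Q$ is decreasing, $(Q]=Q$, so $\mathrm{Min}(Q)=\mathrm{Min}(P)\cap(Q]=\mathrm{Min}(P)\cap Q$. To see this coincides with the minimal elements of the induced poset $(Q,\le)$, note any $x\in\mathrm{Min}(P)\cap Q$ is trivially minimal in $Q$, while a minimal element of $Q$ sitting strictly above some $y\in P$ would force $y\in Q$ (as $Q$ is decreasing), contradicting minimality. Part (2) is the order dual, obtained by the same argument or by applying $\zeta$.

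Next (3) and (4). For (3), intersecting $P=\mathrm{Min}(P)\cup\mathrm{Max}(P)$ with $Q$ and invoking (1),(2) gives $Q=\mathrm{Min}(Q)\cup\mathrm{Max}(Q)$; any element of $\mathrm{Min}(Q)\cap\mathrm{Max}(Q)$ lies in $\mathrm{Min}(P)\cap\mathrm{Max}(P)$, hence is an isolated vertex of the comparability graph, forcing $Q$ to be its singleton, and conversely a singleton component has its point both minimal and maximal. For (4), if $x=\zeta(x)$ then, since $\zeta$ interchanges $\mathrm{Min}(P)$ and $\mathrm{Max}(P)$, $x$ would lie in $\mathrm{Min}(P)\cap\mathrm{Max}(P)$, again isolated, so $|Q|=1$; contrapositively $|Q|>1$ rules this out.

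The substantive work is (5) and (6), where $\zeta$-width $1$ enters, and I expect the disjoint case to be the main obstacle. If $Q\cap\zeta(Q)=\emptyset$, then for $x,y\in Q$ the point $\zeta(y)$ lies in a different component, so $\ell(x,\zeta(y))=\infty$ and hence $\ell_\zeta(x,y)=\ell(x,y)$; $\zeta$-width $1$ then forces $\ell(x,y)\le 1$ for all $x,y\in Q$, making $Q$ a clique in the comparability graph, which in a height-$1$ poset is a chain, so $|Q|\le 2$ and $|\mathrm{Min}(Q)|=|\mathrm{Max}(Q)|=1$. With this, (5) follows: in the self-dual case $Q=\zeta(Q)$ the involution $\zeta$ restricts to a bijection $\mathrm{Min}(Q)\to\mathrm{Max}(Q)$, giving $|\mathrm{Min}(Q)|=|\mathrm{Max}(Q)|$, while the disjoint case gives equality with value $1$; since the disjoint case caps $|\mathrm{Min}(Q)|$ at $1$, any $Q$ with $|\mathrm{Min}(Q)|>1$ must be self-dual. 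For (6), $|Q|>2$ excludes the disjoint case, so $Q=\zeta(Q)$; combining $|Q|=|\mathrm{Min}(Q)|+|\mathrm{Max}(Q)|=2|\mathrm{Min}(Q)|$ (disjointness from (3)) with (5) yields $|\mathrm{Min}(Q)|\ge 2$. Finally, for distinct $x,y\in\mathrm{Min}(Q)$, both being minimal and distinct forces $\ell(x,y)\ge 2$, so $\ell_\zeta(x,y)\le 1$ can only be realised by $\ell(x,\zeta(y))\le 1$; ruling out $x=\zeta(y)$ (which would give an isolated point and $|Q|=1$) leaves $\ell(x,\zeta(y))=1$, and since $x$ is minimal and $\zeta(y)$ maximal this comparability must be $x<\zeta(y)$.
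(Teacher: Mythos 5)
Your proof is correct and takes essentially the same route as the paper's: parts (1)--(4) match its argument step for step, and in (5)--(6) your case split on the dichotomy $Q\cap\zeta(Q)=\emptyset$ versus $Q=\zeta(Q)$ is just a contrapositive repackaging of the paper's reasoning, hinging on the same key step (two elements of $Q$ at distance $\geq 2$ together with $\zeta$-width $1$ force $\ell(x,\zeta(y))\leq 1$, hence $\zeta(y)\in Q$ and $Q=\zeta(Q)$). The only cosmetic difference is your explicit observation that in the disjoint case $Q$ is a chain of at most $2$ elements, which the paper leaves implicit.
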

\begin{proof} 
(1)-(2) By definition  $\mathrm{Min}(Q)=\mathrm{Min}(P)\cap (Q]$ and $\mathrm{Max}(Q)=\mathrm{Max}(P)\cap [Q)$ and the result follows immediately from the fact that  $Q$ is an order component.\\

 (3) Immediate from (1) and (2) and the facts that $P$ is of height at most $1$ and $Q$ is an order component.  \\
 
(4) If $x\in Q$ is such that $x=\zeta(x)$, then $x\in\mathrm{Min}(Q)\cap\mathrm{Max}(Q)$ and, by (3), $|Q|=1$.\\

(5) Suppose $P$ is of $\zeta$-width $1$. We know that $|\mathrm{Min}(Q)|\geq 1$ and $|\mathrm{Max}(Q)|\geq 1$. Suppose $|\mathrm{Min}(Q)|> 1$ or $|\mathrm{Max}(Q)|> 1$. Then there exist $x,y\in Q$ such that $\ell (x,y)\ge 2$.  Since $\ell_{\zeta} (x,y)\le 1$, it follows that $\ell (x,\zeta (y))\le 1$ and then $\zeta(y)\in Q\cap\zeta(Q)$. 
So $Q=\zeta(Q)$ and, consequently, for every $z \in P$, $z\in \mathrm{Min}(Q)$ if and only if $\zeta(z)\in \mathrm{Max}(Q)$. Thus $|\mathrm{Min}(Q)|=|\mathrm{Max}(Q)|$.\\

(6) Suppose $P$ is of $\zeta$-width $1$ and $|Q|>2$. As, by (3), $Q=\mathrm{Min}(Q)\cup \mathrm{Max}(Q)$ and by (5), $|\mathrm{Min}(Q)|=|\mathrm{Max}(Q)|$, we have that $|\mathrm{Min}(Q)|\geq 2$. That $Q=\zeta (Q)$ follows from  (5). Let $x, y$ be distinct elements in $\mathrm{Min}(Q)$. Then $\ell (x,y)\ge 2$ and, consequently $\ell (x,\zeta (y))\le 1$. 
Since $|Q|>2$ and $x\in \mathrm{Min}(Q)$ and $\zeta (y)\in \mathrm{Max}(Q)$, we have that $x\neq \zeta(y)$ and, consequently, $\ell (x,\zeta (y))=1$ and hence $x<\zeta(y)$.
\end{proof}

 \begin{Theorem}\label{D2}  Let $L$ be a non-trivial $pm$-algebra and $P$ be its dual $pm$-space. Then $L\in \mathbf{M}_1$ if and only if, for every order component $Q\subseteq P$, $Q\cup \zeta (Q)$ is a copy of one of $Q_0$, $Q_1$,  $Q_2$, $Q_3$, $Q_4$, $Q_5$, or is of type $Q_6(I,S)$ for some Stone space $(S;\tau)$ and $I\subseteq S$ a subset of the isolated points of $S$ with $3\leq |S|$  {\rm (}as diagrammed in {\rm Figure $1$)}.
\end{Theorem}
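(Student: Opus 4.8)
The plan is to translate membership in $\mathbf{M}_1$ into the two topological conditions already available and then read off the possible shapes of a component from Lemma \ref{D1B}. By Theorem \ref{C1} and Corollary \ref{C4B}, $L\in\mathbf{M}_1$ if and only if $P$ has height at most $1$ and $P$ is of $\zeta$-width $1$; so both directions reduce to comparing these two conditions with the list $Q_0,\ldots,Q_5,Q_6(I,S)$.

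For the forward implication, assume $P$ has height at most $1$ and $\zeta$-width $1$, and fix an order component $Q$. I would argue by the size of $Q$. When $|Q|=1$ the component is a single point $x$; according to whether $\zeta(x)=x$ or not (in the latter case $x$ and $\zeta(x)$ must be incomparable, else they would lie in a common component), $Q\cup\zeta(Q)$ is a copy of $Q_0$ or $Q_1$. When $|Q|=2$, Lemma \ref{D1B}(3) forces $Q$ to be a two-element chain, and depending on whether $\zeta(Q)=Q$ or $\zeta(Q)\cap Q=\emptyset$ one gets $Q_2$ or $Q_3$. When $|Q|>2$, Lemma \ref{D1B}(6) gives $Q=\zeta(Q)$, $\mathrm{Min}(Q)=:S$ with $|S|\geq 2$, and $x<\zeta(y)$ for all distinct $x,y\in S$; setting $I:=\{x\in S\colon x\not<\zeta(x)\}$, the order on $Q=S\cup\zeta(S)$ is precisely the one defining $Q_6(I,S)$. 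For $|S|=2$ connectedness forbids $I=S$, so up to isomorphism one obtains $Q_4$ or $Q_5$, while $|S|\geq 3$ yields $Q_6(I,S)$ outright.

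The technical heart of the forward direction is verifying the topological requirements hidden in $Q_6(I,S)$: that $S$ is a Stone space and that $I$ consists of isolated points. For the former I would note that $\mathrm{Min}(P)$ is a closed, hence compact, antichain of the Priestley space $P$ and is therefore totally disconnected (distinct, necessarily incomparable, points are separated by a clopen set), so $\mathrm{Min}(P)$ is a Stone space; since $Q$ is closed by Theorem \ref{X4}, $S=\mathrm{Min}(P)\cap Q$ is a closed subspace of it and is again a Stone space. For the latter, if $x\in I$ then $\zeta(x)$ lies above every minimal element of $Q$ except $x$, so $\mathrm{Min}(\zeta(x))=S\setminus\{x\}$; as $\mathrm{Min}(\zeta(x))=\mathrm{Min}(P)\cap(\zeta(x)]$ is closed, $\{x\}$ is open in $S$, i.e. $x$ is isolated. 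Finally, since $S=\mathrm{Min}(P)\cap Q$ and $\zeta(S)=\mathrm{Max}(P)\cap Q$ are each clopen in $Q$ and interchanged by $\zeta$, the subspace topology on $Q$ is exactly the union topology of $Q_6(I,S)$. I expect this topological bookkeeping to be the main obstacle; the order-theoretic skeleton is essentially immediate from Lemma \ref{D1B}.

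For the converse I would simply check that each listed configuration has height at most $1$ and $\zeta$-width $1$, whence $P$ does too. Height at most $1$ is clear for $Q_0,\ldots,Q_5$ and holds for $Q_6(I,S)$ by Theorem \ref{D1C}. For $\zeta$-width $1$ it suffices, within each type, to bound the $\zeta$-distance of any two points of a common component by $1$: for the small types this is immediate, and for $Q_6(I,S)$ the defining relation $x<\zeta(y)$ for distinct $x,y\in S$ gives $\ell(x,\zeta(y))=1$, hence $\ell_\zeta(x,y)\leq 1$ for two minimal points and, dually, for two maximal points, while for a minimal point $x$ and a maximal point $\zeta(y)$ one has $\ell_\zeta(x,\zeta(y))=0$ when $x=y$ and $\ell(x,\zeta(y))=1$ otherwise. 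Theorem \ref{C1} and Corollary \ref{C4B} then give $L\in\mathbf{M}_1$.
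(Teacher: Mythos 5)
Your proposal is correct and follows essentially the same route as the paper's proof: reduce membership in $\mathbf{M}_1$ to height at most $1$ together with $\zeta$-width $1$ via Theorem \ref{C1} and Corollary \ref{C4B}, split into cases on $|Q|$ using Lemma \ref{D1B}, invoke Theorem \ref{X4} to get closedness, and verify the converse by checking the $\zeta$-width of each listed type. The only notable variation is your argument that points of $I$ are isolated --- you note directly that $\mathrm{Min}(\zeta(x))=S\setminus\{x\}$ is closed, whereas the paper argues by contradiction that an accumulation point $x$ of $\mathrm{Min}(Q)$ must satisfy $x\leq\zeta(x)$ --- but this is a local simplification, not a different approach.
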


\begin{proof}
Applying Theorem \ref{C1} and Corollary \ref{C4B}, we know that  $L\in \mathbf{M}_1$ if and only if $P$ has height at most $1$ and, for any order component $Q$ of $P$ and any $x,y\in Q$, we have $\ell_{\zeta} (x,y)\leq 1$.\\

Suppose that for every order component $Q\subseteq P$, $Q\cup \zeta (Q)$ is a copy of one of $Q_0$, $Q_1$,  $Q_2$, $Q_3$, $Q_4$, $Q_5$, or is of type $Q_6(I,S)$ with $3\leq |S|$. It is obvious that $P$ has height at most $1$. It is also clear that if $x,y\in Q_i$, $0\leq i\leq 5$, then $\ell_{\zeta} (x,y)\leq 1$. Consider $Q_6(I,S)$ with $3\leq |S|$. Suppose $x,y\in S$ such that $x\neq y$. Then $x<\zeta (y)$ and so $\ell_{\zeta} (x,y)=1$. As $\ell_{\zeta}(x,y)=\ell_{\zeta}(\zeta(x),\zeta (y))=\ell_{\zeta}(x,\zeta (y))=\ell_{\zeta}(\zeta(x),y)$, we have that, for any $x,y\in Q_6(I,S)$, $\ell_{\zeta} (x,y)\leq 1$. So $L\in \mathbf{M}_1$.\\

Conversely, suppose $L\in \mathbf{M}_1$ and let $Q$ be an order component of $P$. If $|Q| = 1$, then  $Q\cup \zeta (Q)$ is a copy of $Q_0$ or $Q_1$.\\

Suppose $|Q| = 2$. By (4) of the previous lemma, we know that $x\neq\zeta(x)$, for any $x\in Q$. So $Q\cup \zeta (Q)$ is a copy of $Q_2$ if $Q=\zeta(Q)$, whilst $Q\cup \zeta (Q)$ is a copy of $Q_3$ if $Q\cap \zeta(Q)=\emptyset$.\\

Let $|Q| > 2$.  Observe that, by the previous lemma, $Q$ is the disjoint union of $\mathrm{Min}(Q)$ and $\mathrm{Max}(Q)$, $Q=\zeta(Q)$, $|\mathrm{Max}(Q)|=|\mathrm{Min}(Q)|\geq 2$ and, for distinct $x,y$ in $\mathrm{Min}(Q)$, $x<\zeta(y)$, each of which will hold for the remainder of the proof.\\

Suppose initially that $|\mathrm{Min}(Q)| = 2$ and, say, $\mathrm{Min}(Q)=\{x,y\}$.  Then $\mathrm{Max}(Q)=\{\zeta(x),\zeta(y)\}$. Were it the case that both $x\not\leq \zeta (x)$ and $y\not\leq \zeta (y)$, then $Q$ would fail to be an order component.  
Thus, either $x\leq \zeta (x)$ and $y\leq \zeta (y)$, and $Q\cup \zeta (Q)=Q$ is a copy of $Q_5$, or else, say, $x\not\leq \zeta (x)$ and $y\leq \zeta (y)$ and $Q\cup \zeta (Q)=Q$ is a copy of $Q_4$. \\

 Suppose next that $|\mathrm{Min}(Q)| \ge 3$. We have that $\mathrm{Max}(Q)=\zeta (\mathrm{Min}(Q))$. 
 As $L\in \mathbf{M}_1$, applying Theorem \ref{X4}, we know that $Q$ is a closed set. Since $\zeta(Q)=Q$ and it is obvious that $\mathrm{Min}(x)\subseteq Q$, whenever $x\in Q$, we have that $(Q;\tau_{\restrict Q}, \le_{\restrict Q},\zeta_{\restrict Q})$ is a $pm$-subspace of $P$ for which $\mathrm{Min}(Q)$ and $\mathrm{Max}(Q)$ are both closed and then clopen in $Q$, as $Q$ is the disjoint union of these sets. We have that $\mathrm{Min}(Q)$ endowed with the induced topology is a Stone space. As $\zeta$ is a homeomorphism, $\mathrm{Max}(Q)=\zeta( \mathrm{Min}(Q))$ is a homeomorphic copy of $\mathrm{Min}(Q)$. 
Consider the $pm$-subspace $(Q;\tau_{\restrict Q}, \le_{\restrict Q},\zeta_{\restrict Q})$.
Let $x\in\mathrm{Min}(Q)$.  As observed above $x<\zeta(z)$ for any $z\in \mathrm{Min}(Q)\setminus \{x\}$. Suppose $x$ is an accumulation point of  $\mathrm{Min}(Q)$. If $x\not\leq \zeta(x)$, then there exists a clopen decreasing set $X$ containing $\zeta(x)$ but not $x$. Since $x$ is an accumulation point of  $\mathrm{Min}(Q)$, 
$\zeta(x)$ is an accumulation point of  $\mathrm{Max}(Q)$ and there exists $y\in\mathrm{Max}(Q)\cap(X\setminus\{\zeta(x)\})$. As observed above $x<\zeta(\zeta(y))=y$ and so $x\in X$, since $y\in X$ and $X$ is decreasing, a contradiction.
 Now consider the set $I=\{x\in \mathrm{Min}(Q)\colon x\not< \zeta(x)\}$. The elements of this set are isolated points of $\mathrm{Min}(Q)$. Thus $Q$ is a $pm$-subspace of type $Q_6(I,S)$ where $S$ denotes the Stone space $\mathrm{Min}(Q)$ and $I=\{x\in \mathrm{Min}(Q)\colon x\not< \zeta(x)\}\subseteq \mathrm{Min}(Q)$ is a set of isolated points of $\mathrm{Min}(Q)$.\end{proof}
 
 The next corrollary is an immediate consequence of Theorems \ref{X5} and \ref{D2}.

\begin{Corollary}\label{D2A} The simple algebras in $\mathbf{M}_1$ are the ones whose dual space is a copy of   $Q_0$, $Q_1$,  $Q_2$, $Q_3$, $Q_4$, $Q_5$,  or is of type $Q_6(I,S)$ for some Stone space $(S;\tau)$ and $I\subseteq S$ a subset of the isolated points of $S$ with $3\leq |S|$  {\rm (}as diagrammed in {\rm Figure $1$)}.\hfill{$\Box$}
\end{Corollary}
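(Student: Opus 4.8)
The plan is to obtain the corollary by splicing together Theorem \ref{X5}, which characterises the simple members of $\mathbf{M}_n$ by the shape of their dual spaces, with Theorem \ref{D2}, which describes the dual spaces of the members of $\mathbf{M}_1$ order component by order component. Throughout, I would keep in mind the convention of Section \ref{SB} that a simple algebra is non-trivial, so that the non-triviality hypotheses of both theorems are automatically in force.

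For the forward implication, suppose $L$ is a simple algebra of $\mathbf{M}_1$ with dual $pm$-space $P$. Since $L\in\mathbf{M}_1$, Theorem \ref{X5} (applied with $n=1$) gives that $L$ being simple is equivalent to $P=Q\cup\zeta(Q)$ for some order component $Q$ of $P$. Now invoking Theorem \ref{D2}, which applies because $L\in\mathbf{M}_1$, this component $Q$ satisfies that $Q\cup\zeta(Q)$ is a copy of one of $Q_0,\ldots,Q_5$ or is of type $Q_6(I,S)$ with $3\leq|S|$. As $P=Q\cup\zeta(Q)$, the space $P$ itself has one of these forms, as required.

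For the converse, suppose $P$ is a copy of one of $Q_0,\ldots,Q_5$ or is of type $Q_6(I,S)$. First I would observe, reading off the constructions in Figure $1$, that each of these spaces is of the form $Q\cup\zeta(Q)$ for a single order component $Q$ (with $Q=S$ and $\zeta(Q)=\zeta(S)$ in the case of $Q_6(I,S)$), and that every order component of $P$ equals $Q$ or $\zeta(Q)$. Hence the hypothesis of Theorem \ref{D2} holds for \emph{every} order component of $P$, whence $L\in\mathbf{M}_1$. Since moreover $P=Q\cup\zeta(Q)$, Theorem \ref{X5} (again with $n=1$) then yields that $L$ is simple, completing both directions.

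I expect essentially no obstacle here, which is precisely why the result is stated as an immediate consequence: the two theorems dovetail exactly. The only point requiring verification is the elementary observation used in the converse, namely that each listed space is a single $\zeta$-orbit of order components, i.e.\ has the form $Q\cup\zeta(Q)$; this is transparent from the definitions of $Q_0,\ldots,Q_5$ and of $Q_6(I,S)$, so the corollary follows at once.
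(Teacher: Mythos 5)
Your proposal is correct and takes exactly the paper's route: the paper states the corollary as an immediate consequence of Theorems \ref{X5} and \ref{D2}, which is precisely the splicing you carry out in both directions. The only detail you add explicitly (that each listed space is of the form $Q\cup\zeta(Q)$ for a single order component $Q$, so Theorem \ref{D2} applies to every component and Theorem \ref{X5} then gives simplicity) is the same elementary observation the paper leaves implicit.
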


Note that $Q_0$,   $Q_2$,   $Q_5$,  and $Q_6(\emptyset,S)$ represent Kleene algebras, but no others do.\\

As every subdirectly irreducible algebra in $\mathbf{M}_n$, $n<\omega$, is simple, applying Corollaries \ref{D1A} and \ref{D2A}, we have that $\mathbf{M}_0$ is generated by the pseudocomplemented de Morgan algebras  $E(Q_0)$, $E(Q_1)$, and $E(Q_2)$, whilst  $\mathbf{M}_1$ is generated by the pseudocomplemented de Morgan algebras  $E(Q_0)$, $E(Q_1)$,  $E(Q_2)$, $E(Q_3)$, $E(Q_4)$, $E(Q_5)$,  and $E(Q_6(I,S))$ where $(S;\tau )$ is a Stone space, $3\leq |S|$, and $I\subseteq S$ is a subset of the set of isolated points of $S$.\\

Recall that a Stone space is finite if and only if it is discrete. Consequently, all the elements of a finite Stone space are isolated points. It is clear that, up to $pm$-isomorphism, for any $n$, $3\leq n<\omega$, and any $m$, $0\leq m\leq n$, there is exactly one $Q_6(I, S)$ such that $|S|=n$ and $|I|=m$. Consider such an $S$ and such an $I$, we denote by $Q_6(m,n)$ the $pm$-space $Q_6(I,S)$.\\

  In fact, as the following {theorem} shows,  $\mathbf{M}_1$ is generated by the pseudocomplemented de Morgan algebras represented by $Q_0$, $Q_1$,  $Q_2$, $Q_3$, $Q_4$, $Q_5$, and $Q_6(m,n)$ where $3\le n<\omega$ and $0\le m\le n$.\\

\begin{Theorem}\label{D3}
The variety $\mathbf{M}_1$ is locally finite.
\end{Theorem}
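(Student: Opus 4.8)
The plan is to show that the free algebra $F_{\mathbf{M}_1}(k)$ is finite for every $k<\omega$; local finiteness is exactly this. Since $\mathbf{M}_1$ is a discriminator variety in which every subdirectly irreducible algebra is simple, $F_{\mathbf{M}_1}(k)$ is a subdirect product of its subdirectly irreducible quotients, each of which is simple and, being a quotient of $F_{\mathbf{M}_1}(k)$, is generated by at most $k$ elements. A homomorphism from $F_{\mathbf{M}_1}(k)$ to an algebra $S$ is determined by the images of the $k$ free generators, so there are at most $|S|^{k}$ of them when $S$ is finite. Hence, once we show that \emph{(i)} there are, up to isomorphism, only finitely many $k$-generated simple algebras in $\mathbf{M}_1$ and \emph{(ii)} each of them is finite, it follows that $F_{\mathbf{M}_1}(k)$ embeds into a finite product of finite algebras and is therefore finite.

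By Corollary \ref{D2A}, the dual space of a simple algebra of $\mathbf{M}_1$ is one of the finite spaces $Q_0,\dots,Q_5$ or is of type $Q_6(I,S)$. The spaces $Q_0,\dots,Q_5$ contribute finitely many finite algebras, so the whole problem reduces to the following claim: if $L=E(Q_6(I,S))$ is generated, as a $pm$-algebra, by $k$ elements $G_1,\dots,G_k$, then $|S|\le 2^{2k}$. Granting the claim, $S$ is finite, hence $L$ is finite; and since, up to $pm$-isomorphism, $Q_6(I,S)$ is determined by the pair $(|I|,|S|)$ with $3\le |S|\le 2^{2k}$, there are only finitely many such $L$. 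Together with $Q_0,\dots,Q_5$ this gives \emph{(i)} and \emph{(ii)}.

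To prove the claim, write $P=S\cup\zeta(S)$ and, for $x\in S$, record its \emph{bitype} $\beta(x)=\big(([x\in G_i])_{i\le k},\,([\zeta(x)\in G_i])_{i\le k}\big)\in\{0,1\}^{2k}$. The fibres of $\beta$ are clopen subsets of the Stone space $S$ and partition it into at most $2^{2k}$ cells; let $\mathcal{B}$ be the finite Boolean algebra of unions of these cells. I would then prove, by induction on the construction of the elements of $L$ from $G_1,\dots,G_k$ via $\cap,\cup,{}^{\ast},{}^{\prime}$, the invariant that every $X\in L$ satisfies $X\cap S\in\mathcal{B}$ and $X\cap\zeta(S)=\zeta(V)$ for some $V\in\mathcal{B}$. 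The cases $\cap$ and $\cup$ are immediate; for ${}^{\ast}$ and ${}^{\prime}$ one uses $X^{\ast}=P\setminus[X)$ and $X^{\prime}=\zeta(P\setminus X)$ together with the explicit description of $[X)$ from the proof of Theorem \ref{D1C} (namely $[X)=X\cup\zeta(S)$ except when $X=\emptyset$ or $X=\{x\}$ with $x\in I$), which shows that each operation keeps the $S$-trace inside $\mathcal{B}$ and sends the $\zeta(S)$-trace to $\zeta(\mathcal{B})$. Finally, since $L=E(Q_6(I,S))$ consists of \emph{all} clopen decreasing subsets of $P$, any two distinct points of $S$ are separated by some $X\in L$ (total order-disconnectedness), so they lie in different cells; thus $\beta$ is injective and $|S|\le 2^{2k}$.

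The routine parts are the classification input (Corollary \ref{D2A}) and the subdirect-product bookkeeping of the first paragraph. The real work — and the step I expect to be the main obstacle — is the inductive invariant of the third paragraph: one must verify that the pseudocomplement and the de Morgan involution, whose dual descriptions are global (they involve $[X)$ and $\zeta$), cannot manufacture a clopen decreasing set that splits a $\beta$-cell. The delicate point is handling the exceptional values of $[X)$ for $X=\emptyset$ and for singletons $\{x\}$ with $x\in I$; in the latter case one invokes the induction hypothesis for $X$ itself to conclude that $\{x\}\in\mathcal{B}$, so that $X^{\ast}$ again respects $\mathcal{B}$.
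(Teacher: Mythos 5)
Your proposal is correct, but it is organised quite differently from the paper's proof, and the difference is worth spelling out. The combinatorial heart is actually the same: the class of clopen decreasing sets $X$ satisfying your invariant ($X\cap S\in\mathcal{B}$ and $X\cap\zeta(S)=\zeta(V)$ with $V\in\mathcal{B}$) is, once decreasingness is taken into account, precisely the paper's set $K(\mathcal{F})$ with $\mathcal{F}=\mathcal{B}$, namely $\mathcal{B}\cup\{\zeta(V)\cup S\colon V\in\mathcal{B}\}\cup\{(\zeta(x)]\colon \{x\}\in\mathcal{B},\ x\in I\}$; and your inductive verification that $\cap,\cup,{}^{\ast},{}^{\prime}$ preserve the invariant (including the exceptional singleton case of $[X)$, which you handle exactly as the paper does, via the induction hypothesis giving $\{x\}\in\mathcal{B}$) is the content of the paper's Claim that $K(\mathcal{F})$ is a subalgebra of $E(Q_6(I,S))$. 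Where you genuinely diverge is in how this core is deployed. The paper applies Mal'cev's criterion \cite[VI.14 Theorem 3]{Ma70}: since $\mathbf{M}_1$ is generated by the simple algebras of Corollary \ref{D2A}, it suffices to bound uniformly the $N$-generated subalgebras of those generators, and $K(\mathcal{F})$ does exactly that — any $N$ elements of $E(Q_6(I,S))$, even for infinite $S$, sit inside some $K(\mathcal{F})$ with at most $3\cdot 2^{(2^{2N})}$ elements. You instead prove only the degenerate instance of this bound, where the $k$ elements generate the whole algebra, and your mechanism for exploiting generation — every clopen decreasing set then satisfies the invariant, so by total order-disconnectedness the bitype map $\beta$ is injective and $|S|\le 2^{2k}$ — has no counterpart in the paper. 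You then pay for this weaker claim with extra machinery the paper's proof does not need: the discriminator property of $\mathbf{M}_1$ (subdirectly irreducible $=$ simple), Birkhoff's subdirect representation of $F_{\mathbf{M}_1}(k)$, and the counting of homomorphisms into the finitely many finite $k$-generated simples to see that only finitely many such congruences occur. What your route buys is direct structural information — finiteness and classification of the $k$-generated simple algebras and finiteness of the free algebras — anticipating the kind of counting the paper only develops afterwards (Theorem \ref{E0}, Corollary \ref{E4}); what the paper's route buys is economy, since it never mentions free algebras or congruences and its subalgebra bound is valid for arbitrary $N$-element subsets of the generating simples, not just generating ones.
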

\begin{proof}
Since the pseudocomplemented de Morgan algebras $E(Q_0)$, $E(Q_1)$,  $E(Q_2)$, $E(Q_3)$, $E(Q_4)$, $E(Q_5)$,  and $E(Q_6(I,S))$ where $(S;\tau )$ is a Stone space, $3\leq |S|$, and $I\subseteq S$ is a subset of the set of its isolated points generate $\mathbf{M}_1$, it is sufficient to show that for each $N<\omega$, there exists $N^\prime <\omega$, depending on $N$, such that, for any set of $N$ elements in any one of the pseudocomplemented de Morgan algebras $E(Q_0)$, $E(Q_1)$,  $E(Q_2)$, $E(Q_3)$, $E(Q_4)$, $E(Q_5)$,  and $E(Q_6(I,S))$ where $(S;\tau )$ is a Stone space, $|S|\geq 3$, and $I$ is a subset of the set of its isolated points, generates a subalgebra whose cardinality does not exceed  $N^\prime $ (see Mal'cev \cite[VI.14 Theorem 3]{Ma70}).  Since $\{E(Q_i)\colon 0\leq i\leq 5\}$ is a finite set of finite algebras, it is only required to show that, for each $N<\omega$, there exists $N^\prime <\omega$ such that, for any set of $N$ elements in any one of the pseudocomplemented de Morgan algebras $E(Q_6(I,S))$  generates a subalgebra with at most  $N^\prime $ elements.\\

Consider $E(Q_6(I,S))$. Recall that whenever $z$ is an isolated point of the Stone space $S$ or of the Stone space $\zeta(S)$, $z$ is also an isolated point of $Q_6(I,S)$ and, consequently, $\{z\}$ is clopen in $Q_6(I,S)$. Let $x\in I$. Then $x$ is an isolated point of $S$, $\zeta(x)$ is an isolated point of $\zeta(S)$ and we have that
\[ (\zeta(x)] = \{ \zeta(x)\} \cup (S\setminus \{x\}), \]
 is a clopen decreasing set of $Q_6(I,S)$.\\

\noindent {\bf Claim.} Let $\mathcal{F}$ be a Boolean subalgebra of the Boolean algebra of clopen sets of $S$, let  $\mathcal{F}_I=\{\{x\}\in \mathcal{F}\colon x\in I\}$ and
  \[K(\mathcal{F}) :=\mathcal{F}\cup \{\zeta(X)\cup S \colon X\in \mathcal{F}\}\cup \{(\zeta(x)]\colon \{x\}\in \mathcal{F}_I\}.\]  Then $K(\mathcal{F})$ is a subalgebra of $E(Q_6(I,S))$.\\

First notice that all the elements of $K(\mathcal{F})$ are clopen decreasing sets of $Q_6(I,S)$ and, since $\emptyset, S\in\mathcal{F}$, we have that $\emptyset\in K(\mathcal{F})$ and $S\cup \zeta(S)\in K(\mathcal{F})$.\\

We begin by showing that $K(\mathcal{F})$ is closed under unions and intersections. Consider the various cases, starting with those that involve the first two types of clopen decreasing sets for which there are six cases.\\

 Let $X,Y\in \mathcal{F}$. Obviously $X\cup Y, X\cap Y\in \mathcal{F}$. Further, we have that $X\cup (\zeta(Y)\cup S)=\zeta(Y)\cup S$ and $X\cap (\zeta(Y)\cup S)=X$, whilst $(\zeta(X)\cup S)\cup (\zeta(Y)\cup S)=\zeta(X\cup Y)\cup S$ and $(\zeta(X)\cup S)\cap (\zeta(Y)\cup S)=\zeta(X\cap Y)\cup S$. It remains to consider the cases involving the third type of clopen decreasing set for which there are another six cases. Let $\{x\}\in \mathcal{F}_I$. Then $\{x\}\in \mathcal{F}$, $x\in I$ and $S\setminus\{x\}\in\mathcal{F}$.  We have $X\cap (\zeta(x)]=X\cap (\{\zeta(x)\}\cup (S\setminus \{x\}))=X\cap (S\setminus \{x\})$ and $(\zeta(X)\cup S)\cup (\zeta(x)]=\zeta(X\cup \{x\})\cup S$. Four cases remain. First consider $X\cup (\zeta(x)]$ and $(\zeta(X)\cup S)\cap (\zeta(x)]$. If $x\notin X$, then
$X\cup (\zeta(x)]=X\cup\{\zeta(x)\}\cup (S\setminus \{x\})=\{\zeta(x)\}\cup (S\setminus \{x\})=(\zeta(x)]$ and $(\zeta(X)\cup S)\cap (\zeta(x)]=S\setminus \{x\}$, whilst, if $x\in X$, $X\cup (\zeta(x)]=\{\zeta(x)\}\cup S=\zeta(\{x\})\cup S$ and $(\zeta(X)\cup S)\cap (\zeta(x)]=(\zeta(x)]$. Finally, two cases remain, each involving the third type of clopen decreasing sets. Let $\{y\}\in \mathcal{F}_I$. Then $\{y\}\in \mathcal{F}$ and $y\in I$. Suppose $x\neq y$. Notice that $\{x,y\}\in \mathcal{F}$. We have $(\zeta(x)]\cup (\zeta(y)]= \zeta(\{x,y\})\cup S$  and $(\zeta(x)]\cap (\zeta(y)]=S\setminus\{x,y\}$. \\

Now we show that $K(\mathcal{F})$ is closed under pseudocomplementation. Let $X\in \mathcal{F}$. If $X=\emptyset$, then $X^{\ast}=S\cup\zeta(S)$. If $X=\{x\}$ with $x\in I$, then $\{x\}\in\mathcal{F}_I$ and $X^{\ast}=(S\cup\zeta(S))\setminus [x)=\{\zeta(x)\}\cup (S\setminus \{x\})=(\zeta (x)]$. For the remaining cases, $X^{\ast}=(S\cup\zeta(S))\setminus [X)=(S\cup\zeta(S))\setminus (X\cup \zeta(S))=S\setminus X$. Further, we have $(\zeta(X)\cup S)^{\ast}= \emptyset$. It remains to show that $K(\mathcal{F})$ is closed for the third type of clopen decreasing set. Let $\{x\}\in \mathcal{F}_I$. Then $\{x\}\in \mathcal{F}$ and $x\in I$ and we have $(\zeta(x)]^{\ast}=(S\cup \zeta(S))\setminus [\{\zeta(x)\}\cup (S\setminus\{x\}))=\{x\}$.\\

Finally we show that $K(\mathcal{F})$ is closed for the de Morgan operation. Let $X\in \mathcal{F}$. We have $X^{\prime}=(S\cup \zeta(S))\setminus \zeta (X)=\zeta (S\setminus X)\cup S$ and $(\zeta(X)\cup S)^{\prime}=(S\cup \zeta(S))\setminus (X\cup \zeta (S))=S\setminus X$. If $\{x\}\in \mathcal{F}_I$, we have $(\zeta(x)]^{\prime}=(S\cup \zeta(S))\setminus (\{x\}\cup( \zeta(S)\setminus \{\zeta(x)\}))=\{\zeta(x)\}\cup (S\setminus\{x\})=(\zeta(x)]$, thereby completing the justification of the claim.\\

Let $N$ be a positive integer. Let $\{ X_i\colon 0\leq i<N\}$ be a set of clopen decreasing sets of $Q_6(I,S)$. For each $i$, $0\leq i<N$, $X_i=(X_i\cap S)\cup (X_i\cap \zeta(S))= (X_i\cap S)\cup \zeta(\zeta (X_i)\cap S)$ and $X_i\cap S$ and $\zeta (X_i)\cap S$ are clopen sets of $S$. Consider $T=\{X_i\cap S\colon 0\leq i<N\}\cup \{\zeta (X_i)\cap S\colon 0\leq i<N\}$. This set has, at most, $N_1 = 2N$ elements. Let $\mathcal{B}$ be the Boolean algebra of clopen sets of $S$ and $\mathcal{F}$ its Boolean subalgebra generated by $T$. Since a free Boolean algebra with $N_1$ generators has $2^{(2^{N_1})}$ elements,
$\mathcal{F}$ has at most $N_2=2^{(2^{N_1})}$ elements. By the above claim, $K(\mathcal{F})$ is a subalgebra of $E(Q_6(I,S))$. It is obvious that $K(\mathcal{F})$ has, at most, $3N_2$ elements. It remains to show that, for every $i$, $0\leq i<N$, $X_i\in K(\mathcal{F})$. We know that $X_i=(X_i\cap S)\cup \zeta(\zeta (X_i)\cap S)$. For each $i$, $ X_i\cap S\in T\subseteq \mathcal{F}$ and $\zeta(X_i)\cap S\in T\subseteq \mathcal{F}$. If $\zeta(X_i)\cap S=\emptyset$, then $X_i=X_i\cap S\in \mathcal{F}\subseteq K(\mathcal{F})$. Suppose $|\zeta(X_i)\cap S|\geq 2$ or $\zeta(X_i)\cap S=\{x\}$ with $x\notin I$. Then $|\zeta(\zeta(X_i)\cap S)|\geq 2$ or $\zeta(\zeta(X_i)\cap S)=\{\zeta(x)\}$ with $x\notin I$. As $\zeta(\zeta(X_i)\cap S)\subseteq X_i$ and $X_i$ is decreasing, $S\subseteq X_i$ and, consequently, $X_i=S\cup \zeta (\zeta(X_i)\cap S)\in K(\mathcal{F})$. Finally, suppose $\zeta(X_i)\cap S=\{x\}$ with $x\in I$. Then $\zeta(\zeta (X_i)\cap S)=\{\zeta(x)\}$ and $X_i=(X_i\cap S)\cup \{\zeta(x)\}$ and, as $X_i$ is decreasing, $X_i=(X_i\cap S)\cup (\zeta(x)]$ with $\{x\}\in\mathcal{F}$ and $x\in I$. So, $X_i\in K(\mathcal{F})$.\\

 Thus, the subalgebra of $E(Q_6(I,S))$ generated by $\{X_i\colon 0\leq i<N \}$ has at most $N^{\prime}=3N_2 =3\times 2^{(2^{N_1})}= 3\times 2^{(2^{2N})}$ elements.
\end{proof}

For any $i$, $0\leq i\leq 5$,  and for any pair $(m,n)$ with $3\leq n<\omega$ and  $0\leq m\leq n$, let 
\[ L_i=E(Q_i)\;\;\; \mbox{ and }\;\;\; L_6(m,n)=E(Q_6(m,n)).\]

 In view of Theorem \ref{D3}, the following corollary is immediate.
 
 \begin{Corollary} \label{D3A}
 The variety  $\mathbf{M}_1$ is generated by its finite simple algebras, which are, up to isomorphism, $L_i$, $0\leq i\leq 5$, and $L_6(m,n)$, $3\leq n<\omega$,  $0\leq m\leq n$.\hfill{$\Box$}
\end{Corollary}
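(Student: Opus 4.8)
The plan is to combine the explicit classification of the simple algebras of $\mathbf{M}_1$ with the local finiteness established in Theorem \ref{D3}. First I would pin down which simple algebras are finite. By Corollary \ref{D2A}, a simple algebra of $\mathbf{M}_1$ has dual $pm$-space $pm$-isomorphic to one of $Q_0,\dots,Q_5$ or to some $Q_6(I,S)$. The spaces $Q_0,\dots,Q_5$ are finite, while $Q_6(I,S)$ is finite precisely when the Stone space $S$ is finite; in that case $S$ is discrete, every point of $S$ is isolated, and so, as noted in the discussion preceding Theorem \ref{D3}, $Q_6(I,S)$ is, up to $pm$-isomorphism, one of the spaces $Q_6(m,n)$ with $3\le n<\omega$ and $0\le m\le n$. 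Dualising, the finite simple algebras of $\mathbf{M}_1$ are, up to isomorphism, exactly the $L_i=E(Q_i)$ for $0\le i\le 5$ together with the $L_6(m,n)=E(Q_6(m,n))$ for $3\le n<\omega$, $0\le m\le n$, which is precisely the list in the statement.

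Let $\mathbf{V}$ denote the subvariety of $\mathbf{M}_1$ generated by these finite simple algebras. The inclusion $\mathbf{V}\subseteq\mathbf{M}_1$ is immediate, so the real task is to prove $\mathbf{M}_1\subseteq\mathbf{V}$. Here I would invoke local finiteness: since $\mathbf{M}_1$ is locally finite by Theorem \ref{D3}, it is generated by its finite members, because any identity failing in some member of $\mathbf{M}_1$ already fails in the finite subalgebra generated by the finitely many elements that witness the failure. Thus it suffices to show that every \emph{finite} algebra $A\in\mathbf{M}_1$ lies in $\mathbf{V}$.

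To handle a finite $A\in\mathbf{M}_1$, I would apply Birkhoff's subdirect representation theorem: $A$ embeds as a subdirect product of finitely many subdirectly irreducible quotients $A/\theta_1,\dots,A/\theta_k$, each of which is finite since it is a quotient of a finite algebra. Because $\mathbf{M}_1$ is a discriminator variety, every subdirectly irreducible algebra in it is simple (as recorded at the start of Section \ref{SD}), so each $A/\theta_j$ is a finite simple algebra, hence isomorphic to one of the $L_i$ or $L_6(m,n)$ identified above. Consequently $A\in SP(\{L_i : 0\le i\le 5\}\cup\{L_6(m,n)\})\subseteq\mathbf{V}$, which gives $\mathbf{M}_1\subseteq\mathbf{V}$ and therefore $\mathbf{M}_1=\mathbf{V}$.

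I do not expect a genuine obstacle: the substantive work has already been carried out in Theorem \ref{D3} and Corollary \ref{D2A}, which is exactly why the statement is flagged as immediate. The only points needing a little care are the standard consequence of local finiteness (that the finite members alone generate the variety) and the passage from an arbitrary finite member to its finite simple subdirect factors, which relies on the discriminator property forcing subdirect irreducibility to coincide with simplicity.
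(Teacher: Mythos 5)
Your proposal is correct and follows essentially the same route as the paper, which declares the corollary immediate from Theorem \ref{D3}: you simply spell out the details the paper leaves implicit, namely that local finiteness reduces generation to the finite members, that Birkhoff's subdirect decomposition together with the discriminator property (subdirectly irreducible $=$ simple) expresses each finite member via finite simple quotients, and that Corollary \ref{D2A} plus the finiteness of the Stone space $S$ identifies these with the $L_i$ and $L_6(m,n)$.
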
 

To show that  the free algebra in $\mathbf{K}_2$ on one generator, $F_{\mathbf{K}_2}(1)$, is infinite we are going to consider the sequence of algebras of $\mathbf{K}_2$ presented in the following example.

\begin{Example}\label{F1}  {\rm For $5\leq n<\omega $, let $S_n = \{ x_i\colon 0\leq i<n \}$ and $T_n = \{ y_i\colon 0\leq i<n \}$ be disjoint $n$-element sets. 
Define on $S_n\cup T_n$ the partial order $\leq$  induced by, for $0\leq i,j<n$, 
\[x_i<y_j \;\;\mbox{ iff }\;\; i\notin \{j-1, j+1\}.\] 
 Let $\zeta\colon S_n\cup T_n \to S_n\cup T_n$ be defined by $\zeta(x_i)=y_i$ and $\zeta(y_i)=x_i$, for any $0\leq i<n$.  Consider $P_n = ( S_n\cup T_n;\leq ,\zeta )$ which is a $pm$-space and its dual $pm$-algebra $K_n=E(P_n)$. \\
 
 As ${\rm Min}(P_n)=S_n$ and ${\rm Max}(P_n)=T_n$, we have that $P_n={\rm Min}(P_n)\cup {\rm Max}(P_n)$ and then $K_n$ is a regular $pm$-algebra, by Theorem \ref{C1}. Moreover,
 since $\zeta(y_i)=x_i<y_i=\zeta (x_i)$ for every $0\leq i<n$, $K_n$ is a regular pseudocomplemented Kleene algebra.\\
  
 The algebra $K_n$ is of range $2$, that is $K_n\in\mathbf{K}_2$. In fact, since  $\ell_{\zeta}(x,y)=\ell_{\zeta}(\zeta(x),\zeta(y))=\ell_{\zeta}(x,\zeta(y))=\ell_{\zeta}(\zeta(x),y)$, for any $x,y\in P_n$, it suffices to show that, for $0\leq i,j<n$, $\ell_{\zeta}(y_i,y_j)\leq 2$. Let $0\leq i,j<n$, choose $0\leq k<n$ such $k\notin \{i-1, i+1, j-1, j+1\}$. Then $x_k<y_i ,y_j$ and $\ell(y_i,y_j)\leq 2$. So $\ell_{\zeta}(y_i,y_j)\leq 2$. Thus $P_n$ is of $\zeta$-width $2$ and then $K_n$ is of range $2$, by Corollary \ref{C4B}.\\
  
 Notice that $K_n\notin\mathbf{K}_1$, since $\ell(y_0,y_1)=2$ and $\ell(y_0,\zeta(y_1))=\ell(y_0, x_1)=3$, so $\ell_{\zeta}(y_0,y_1)=2$.  }
\end{Example}

\begin{Theorem}\label{D6}   The variety $\mathbf{K}_2$ is not locally finite, in fact the free algebra on one generator  $F_{\mathbf{K}_2}(1)$ is infinite.  
\end{Theorem}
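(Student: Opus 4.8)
The plan is to exploit the family $\{K_n : 5 \le n < \omega\}$ constructed in Example \ref{F1}, where $K_n = E(P_n)$ with $P_n = (S_n \cup T_n;\leq,\zeta)$. Since $F_{\mathbf{K}_2}(1)$ is free on one generator in $\mathbf{K}_2$, for any element $a$ of any algebra in $\mathbf{K}_2$ the subalgebra $\langle a\rangle$ generated by $a$ is a homomorphic image of $F_{\mathbf{K}_2}(1)$; hence $|F_{\mathbf{K}_2}(1)| \ge |\langle a\rangle|$. So it suffices to exhibit, for each $n$, a single element of $K_n$ generating a subalgebra whose size grows without bound in $n$. I would take as generator the clopen decreasing set $\{x_0\}$ (decreasing since $x_0$ is minimal) and show that $|\langle\{x_0\}\rangle| \ge 2^n$.

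The engine of the argument is the unary term $\tau(X) := ((X^\ast)^\prime)^\ast = X^{\ast\prime\ast}$. First I would compute $X^{\ast\prime} = \zeta([X))$, using $X^\ast = P_n\setminus[X)$, $X^\prime = P_n\setminus\zeta(X)$, and that $\zeta$ is an involution, so that $\tau(X) = P_n\setminus[\zeta([X)))$. Applying this to a minimal singleton $X=\{x_i\}$ and using the incidence rule $x_i<y_j \iff i\notin\{j-1,j+1\}$, one finds $[\{x_i\}) = \{x_i\}\cup\{y_j : j\neq i\pm 1\}$, whence $\zeta([\{x_i\})) = \{y_i\}\cup(S_n\setminus\{x_{i-1},x_{i+1}\})$, and finally
\[ \tau(\{x_i\}) = \{x_{i-1},x_{i+1}\}\cap S_n, \]
i.e.\ $\tau$ sends $\{x_i\}$ to the set of its neighbours in the path $x_0,x_1,\ldots,x_{n-1}$. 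The subtle point, and where the hypothesis $n\geq 5$ is used, is that the up-closure $[\zeta([\{x_i\})))$ swallows \emph{all} of $T_n$: for each $j$ there is an index $k\notin\{i-1,i+1,j-1,j+1\}$ with $x_k<y_j$, since this forbidden set has at most four members.

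With this neighbourhood operator in hand, I would extract every minimal singleton by induction. We have $\{x_1\}=\tau(\{x_0\})$, since $x_0$ is an endpoint. For $1\le i\le n-2$, assuming $\{x_{i-1}\}$ and $\{x_i\}$ have been produced, the identity $\tau(\{x_i\})=\{x_{i-1},x_{i+1}\}$ together with $\{x_{i-1}\}^\ast\cap S_n = S_n\setminus\{x_{i-1}\}$ gives
\[ \{x_{i+1}\} = \tau(\{x_i\})\wedge \{x_{i-1}\}^\ast, \]
so all of $\{x_0\},\{x_1\},\ldots,\{x_{n-1}\}$ lie in $\langle\{x_0\}\rangle$. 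Since every subset of $S_n$ is a clopen decreasing set equal to the join of the corresponding singletons, $\langle\{x_0\}\rangle$ contains all $2^n$ subsets of $S_n$, so $|\langle\{x_0\}\rangle|\ge 2^n$. Letting $n\to\infty$ then forces $F_{\mathbf{K}_2}(1)$ to be infinite, and hence $\mathbf{K}_2$ is not locally finite.

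I expect the main obstacle to be the term computation $\tau(\{x_i\}) = \{x_{i-1},x_{i+1}\}\cap S_n$ and the bookkeeping in the inductive extraction: both are elementary but depend delicately on the path incidence of $P_n$ and on $n\geq 5$ guaranteeing that up-closures of cofinitely large subsets of $S_n$ cover $T_n$. Everything else (the universal-property reduction and the final join count) is routine.
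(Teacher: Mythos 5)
Your proposal is correct and follows essentially the same route as the paper's own proof: the same algebras $K_n$ from Example \ref{F1}, the same generator $\{x_0\}$, the same term $X^{\ast\prime\ast}$ computed via $X^{\ast\prime}=\zeta([X))$ (with $n\geq 5$ ensuring the up-closure swallows $T_n$), the same inductive extraction $\{x_{i+1}\}=\{x_i\}^{\ast\prime\ast}\cap\{x_{i-1}\}^{\ast}$ of the singletons, and the same reduction to $F_{\mathbf{K}_2}(1)$ via homomorphic images. The only difference is cosmetic: by also closing under unions you obtain the bound $2^n$ where the paper settles for $n$, but either bound gives the result.
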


\begin{proof}  
We show that, for each $5\leq n<\omega$, there exists a subalgebra $A_n$ of $K_n$ that is generated by one element and has cardinality, at least, $n$. \\

Let $5\leq n<\omega$.
First notice that all the sets $\{x_i\}$ are decreasing. We have $[x_0)=\{x_0\}\cup (T_n\setminus \{y_1\})$, $\{x_0\}^{\ast}=P_n\setminus [x_0)=\{y_1\}\cup (S_n\setminus \{x_0\})$ and $\{x_0\}^{\ast \prime}=\zeta (P_n\setminus \{x_0\}^{\ast})= \zeta (P_n\setminus (P_n\setminus [x_0)))= \zeta([x_0))=\{y_0\}\cup (S_n\setminus \{x_1\})$. Moreover, for $1\leq i< n-1$, we have $[x_i)=\{x_i\}\cup (T_n\setminus \{y_{i-1}, y_{i+1}\})$, $\{x_i\}^{\ast}=\{y_{i-1}, y_{i+1}\}\cup (S_n\setminus\{x_i\})$ and $\{x_i\}^{\ast \prime}=\zeta([x_i))=\{y_i\}\cup (S_n\setminus \{x_{i-1}, x_{i+1}\})$.\\ 

Consider $X = \{x_0\}$ and let $\langle X \rangle$ denote the subalgebra of $K_n$ generated by $X$. We will  show that, for every $0\leq i<n$, $\{x_i\}\in \langle X \rangle$. 
Since $X^{\ast\prime} = \{x_0\}^{\ast\prime}= \{y_0\}\cup (S_n\setminus \{x_1\})$, we have that $X^{\ast\prime\ast} =P_n\setminus [X^{\ast\prime})= \{x_1\}\in \langle X \rangle$.
Proceeding inductively,  let $1\leq i< n-1$ and suppose that $ \{x_k\}\in \langle X \rangle$, for any $0\leq k\leq i$. Since
$\{x_i\}^{\ast\prime} =\{y_i\}\cup (S_n\setminus \{x_{i-1}, x_{i+1}\})$, we have that
$\{x_i\}^{\ast\prime\ast} =P_n\setminus [\{x_i\}^{\ast\prime})=\{x_{i-1}, x_{i+1}\}$. As $x_{i-1} \notin \{x_{i-1}\}^{\ast}$ but $x_{i+1} \in \{x_{i-1}\}^{\ast}$, we have  $\{x_i\}^{\ast\prime\ast}\cap \{x_{i-1}\}^{\ast}=\{x_{i+1}\}$ and, consequently, $\{x_{i+1}\}\in \langle X \rangle$, since both $\{x_i\}^{\ast\prime\ast}, \{x_{i-1}\}^{\ast}\in \langle X \rangle$.\\

As all the regular pseudocomplemented Kleene algebras $A_n$ are homomorphic images of $F_{\mathbf{K}_2}(1)$, this algebra must be infinite. 
\end{proof}

\begin{Corollary}\label{D7}
For $2\leq n<\omega$, the varieties ${\mathbf{K}_n}$ and ${\mathbf{M}_n}$ are not locally finite. 
\end{Corollary}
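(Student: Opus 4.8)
The plan is to obtain the result as an immediate consequence of Theorem \ref{D6}, using only the elementary principle that a subvariety of a locally finite variety is again locally finite; the single failure of local finiteness already established for $\mathbf{K}_2$ will propagate upward through the entire towers $(\mathbf{K}_n)_{n\geq 2}$ and $(\mathbf{M}_n)_{n\geq 2}$. First I would assemble the relevant variety inclusions. Recall from the introduction that any algebra of range $2$ is also of range $m$ for every $m\geq 2$; hence $\mathbf{K}_2\subseteq\mathbf{K}_n$ for all $n\geq 2$. Combined with the containment $\mathbf{K}_n\subseteq\mathbf{M}_n$, this yields $\mathbf{K}_2\subseteq\mathbf{K}_n\subseteq\mathbf{M}_n$ for each $n\geq 2$, so any algebra sitting inside $\mathbf{K}_2$ automatically lies in $\mathbf{K}_n$ and in $\mathbf{M}_n$ as well.

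Next I would feed in the construction from the proof of Theorem \ref{D6}: for each $5\leq k<\omega$ there is a one-generated subalgebra $A_k$ of $K_k\in\mathbf{K}_2$ with $|A_k|\geq k$ (I rename the size index to $k$ to keep it distinct from the range parameter $n$). Since $A_k\in\mathbf{K}_2\subseteq\mathbf{K}_n\subseteq\mathbf{M}_n$, these $A_k$ form a family of one-generated algebras of unbounded cardinality inside every $\mathbf{K}_n$ and every $\mathbf{M}_n$ with $n\geq 2$. As each $A_k$ is a homomorphic image of the corresponding free algebra on one generator, that free algebra must be infinite, whence none of these varieties is locally finite. Equivalently, and more directly for the statement as given, one argues by contraposition: were $\mathbf{M}_n$ (or $\mathbf{K}_n$) locally finite, then so would be its subvariety $\mathbf{K}_2$, contradicting Theorem \ref{D6}.

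I do not expect any real obstacle here, since all the genuine work was carried out in Theorem \ref{D6} and what remains is purely the bookkeeping of the inclusions. The only subtlety to watch is the direction of the range inclusions: range $2$ is the smallest range in play, so $\mathbf{K}_2$ sits at the bottom of both towers, and it is precisely this that allows the failure of local finiteness to be inherited going upward rather than downward.
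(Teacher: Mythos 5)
Your proposal is correct and follows essentially the same route as the paper: the paper's proof also deduces the corollary immediately from Theorem \ref{D6}, using the inclusions $\mathbf{K}_2\subseteq\mathbf{K}_n\subseteq\mathbf{M}_n$ (for $n\geq 2$) to conclude that the free algebra on one generator in each of these varieties is infinite. Your extra details (the unbounded one-generated algebras $A_k$ and the contrapositive via inheritance of local finiteness by subvarieties) merely spell out what the paper leaves implicit.
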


\begin{proof}
It is immediate from Theorem \ref{D6} that  $F_{\mathbf{V}}(1)$ is infinite, where $\mathbf{V}$ is any of the mentioned varieties.  
\end{proof}

\section{Lattices of subvarieties of $\mathbf{K}_1$ and $\mathbf{M}_1$} \label{SE}

As observed in Section \ref{SA} the following theorem follows from 
\cite{Sa87a}.

\begin{Theorem} \label{E1}{\rm (\cite{Sa87a})}

{\rm (1)} $L_V(\mathbf{K}_0)$ is isomorphic to a $3$-element chain,

{\rm (2)} $L_V(\mathbf{M}_0)$ is isomorphic to the $5$-element lattice ${\mathbf 1}\oplus ({\mathbf 2}\times{\mathbf 2})$.
\end{Theorem}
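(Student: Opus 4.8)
The plan is to read off both lattices from the classification of simple algebras already obtained, combined with J\'{o}nsson's Lemma. First I would record the ingredients. Since $\mathbf{K}_0\subseteq\mathbf{M}_0$ are discriminator varieties (noted in Section \ref{SA}), they are congruence-distributive, every subdirectly irreducible member is simple, and every subalgebra of a simple algebra is again simple. By Corollary \ref{D1A} the simple members of $\mathbf{M}_0$ are exactly $L_0=E(Q_0)$, $L_1=E(Q_1)$, and $L_2=E(Q_2)$; concretely $L_0$ is the two-element Boolean algebra, $L_2$ is the three-element Kleene chain $0<c<1$ (with $c^\prime=c$ and $c^\ast=0$), and $L_1$ is the four-element de Morgan algebra whose two incomparable middle elements are fixed by $^\prime$, so $L_1$ is not a Kleene algebra. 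By the remark following Corollary \ref{D2A}, among $Q_0,Q_1,Q_2$ only $Q_0$ and $Q_2$ carry Kleene algebras, so the simple members of $\mathbf{K}_0$ are exactly $L_0$ and $L_2$.

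Next I would set up the combinatorial reduction. Because a variety whose subdirectly irreducibles are simple is generated by its simple members, a subvariety $\mathbf{W}$ is determined by the set $\mathrm{Si}(\mathbf{W})$ of isomorphism types of its simple members. Since $\mathbf{M}_0$ (and $\mathbf{K}_0$) is congruence-distributive and generated by finitely many finite algebras, J\'{o}nsson's Lemma (see \cite{BuSa81}) gives that every simple member of the subvariety generated by a set $\Sigma$ of these simples lies in $HS(\Sigma)$. As subalgebras of simples are simple and the only nontrivial homomorphic image of a simple algebra is itself, this says precisely that $\mathrm{Si}(V(\Sigma))$ is the order ideal generated by $\Sigma$ under the embeddability order. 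Hence $\mathbf{W}\mapsto\mathrm{Si}(\mathbf{W})$ is an isomorphism from the subvariety lattice onto the lattice of order ideals of the poset of simple algebras ordered by embeddability, with the trivial variety corresponding to $\emptyset$.

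It then remains only to compute these two posets, which I would do directly on the (tiny) algebras. Here $L_0$ embeds into both $L_1$ and $L_2$ as the subalgebra $\{0,1\}$, while cardinality rules out $L_1\hookrightarrow L_2$ as well as $L_1,L_2\hookrightarrow L_0$. The hard (indeed, only delicate) point is that $L_2\not\hookrightarrow L_1$: a copy of the chain $0<c<1$ inside $L_1$ would force $c$ to be one of the two middle elements, but each of those is fixed by $^\prime$ and has pseudocomplement equal to the \emph{other} middle element rather than $0$, contradicting $c^\ast=0$ in $L_2$; equivalently, neither three-element subset of $L_1$ containing $0,1$ is closed under $^\ast$. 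Consequently, for $\mathbf{K}_0$ the poset of simples is the two-element chain $L_0<L_2$, whose order ideals $\emptyset\subset\{L_0\}\subset\{L_0,L_2\}$ form a three-element chain, giving (1). For $\mathbf{M}_0$ the poset is $L_0<L_1$, $L_0<L_2$ with $L_1,L_2$ incomparable, so its order ideals are $\emptyset$, $\{L_0\}$, $\{L_0,L_1\}$, $\{L_0,L_2\}$, $\{L_0,L_1,L_2\}$, forming $\mathbf{1}\oplus(\mathbf{2}\times\mathbf{2})$, which gives (2).
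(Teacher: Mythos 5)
Your proposal is correct. It differs from the paper mainly in that the paper offers no internal proof at all: Theorem \ref{E1} is justified there only by the remark in Section \ref{SA} that a ``simple inspection'' of the six subdirectly irreducible members of $\mathbf{PM}_0$ classified in \cite[Theorem 6.5]{Sa87a} (exactly three of which are regular) yields both lattices. You instead stay entirely inside the paper's own machinery: Corollary \ref{D1A} gives the three simples $L_0,L_1,L_2$ of $\mathbf{M}_0$ via their dual spaces $Q_0,Q_1,Q_2$; the discriminator-variety facts recalled in Section \ref{SA} together with J\'{o}nsson's Lemma give the isomorphism between $L_V(\mathbf{M}_0)$ (respectively $L_V(\mathbf{K}_0)$) and the lattice of order ideals of the embeddability poset of simples; and you make explicit the one genuinely delicate step that ``simple inspection'' hides, namely $L_2\not\hookrightarrow L_1$, because in $L_1$ the two $^\prime$-fixed middle elements are each other's pseudocomplements, so no $3$-element subset containing $0,1$ is closed under $^\ast$. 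Your order-ideal reduction is exactly the device the paper itself introduces later (Theorem \ref{E0}, resting on \cite[Theorem 4.104]{McMcTa87}) to compute $L_V(\mathbf{K}_1)$ and $L_V(\mathbf{M}_1)$, so in effect you have run the paper's Section \ref{SE} method at range $0$. What your route buys is independence from the external reference \cite{Sa87a}; it is sound because $\mathbf{M}_0$ is generated by three finite algebras, so J\'{o}nsson's Lemma (equivalently, Theorem \ref{E0} plus local finiteness) applies without difficulty, and your concrete descriptions of $E(Q_0)$, $E(Q_1)$, $E(Q_2)$ and of which are Kleene agree with the duality computations in the paper.
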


Now we focus on $L_V(\mathbf{K}_1)$ and on $L_V(\mathbf{M}_1)$.\\

As  the variety of pseudocomplemented de Morgan algebras is congruence-distributive, it is well known  (cf. \cite{Jo67}) that its lattice of subvarieties is distributive. \\

Let $\mathcal{K}$ be a class of algebras of the same similarity type. As usual, we denote by $\mathbf{H}(\mathcal{K})$, $\mathbf{S}(\mathcal{K})$, $\mathbf{I}(\mathcal{K})$, and $\mathbf{V}(\mathcal{K})$ respectively, the class of all homomorphic images of members of $\mathcal{K}$, the class of all subalgebras of members of $\mathcal{K}$,  the class of all isomorphic images of members of $\mathcal{K}$, and the variety generated by $\mathcal{K}$. If $\mathcal{K}$ has finitely many members, say $\mathcal{K}=\{K_0, \ldots, K_n\}$ with $n<\omega$, the mentioned classes  will be denoted by $\mathbf{H}(K_0, \ldots, K_n)$, $\mathbf{S}(K_0, \ldots, K_n)$, $\mathbf{I}(K_0, \ldots, K_n)$ and $\mathbf{V}(K_0, \ldots, K_n)$, respectively.\\

\begin{Theorem}\label{E0} Let $ n<\omega$ and let $\mathcal{K}$ be a class of simple algebras of $\mathbf{M}_n$ such that $\mathbf{V}(\mathcal{K})$ is locally finite. Then every finite simple algebra in  $\mathbf{V}(\mathcal{K})$ belongs to $\mathbf{IS}(\mathcal{K})$.
\end{Theorem}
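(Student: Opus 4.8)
The plan is to present $A$ as a quotient of a finite free algebra and then to exploit congruence-distributivity together with the fact, recalled in Section \ref{SA}, that in the discriminator variety $\mathbf{M}_n$ every subalgebra of a simple algebra is simple. Since $A$ is finite it has a finite generating set, of size $k$ say, so $A$ is a homomorphic image of the free algebra $F = F_{\mathbf{V}(\mathcal{K})}(k)$; write $A = F/\theta$. By local finiteness $F$ is finite, and by the standard construction of free algebras (see \cite{BuSa81}) $F \in \mathbf{ISP}(\mathcal{K})$, so I may regard $F$ as a subalgebra of a product $\prod_{s\in I} M_s$ with each $M_s \in \mathcal{K}$ and projections $\pi_s$. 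As $A$ is simple and non-trivial, $\theta$ is a coatom of $\mathrm{Con}(F)$ and hence meet-irreducible.

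Using that $F$ is finite, I would cut the product down to finitely many coordinates: for each of the finitely many pairs of distinct elements of $F$ pick a coordinate separating them, and let $I_0 \subseteq I$ be the resulting finite set. Putting $B_s = \pi_s(F)$ and $\eta_s = \ker(\pi_s|_F)$ for $s \in I_0$, the map $F \to \prod_{s\in I_0} B_s$ is a subdirect embedding, so that $\bigcap_{s\in I_0}\eta_s = \triangle$. Each $B_s$ is a subalgebra of $M_s \in \mathcal{K}$, whence $B_s \in \mathbf{IS}(\mathcal{K})$, and, being a subalgebra of a simple algebra of $\mathbf{M}_n$, every non-trivial $B_s$ is itself simple.

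For the final step, since $\mathbf{M}_n$ is congruence-distributive, $\mathrm{Con}(F)$ is distributive, so the meet-irreducible element $\theta$ is in fact meet-prime; from $\bigcap_{s\in I_0}\eta_s = \triangle \leq \theta$ and $|I_0| < \omega$ it follows that $\eta_{s_0} \leq \theta$ for some $s_0 \in I_0$. Then $A = F/\theta$ is a non-trivial homomorphic image of $F/\eta_{s_0} \cong B_{s_0}$, and since $B_{s_0}$ is simple this homomorphism must be an isomorphism; thus $A \cong B_{s_0} \in \mathbf{IS}(\mathcal{K})$, as required.

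I expect the main obstacle to be the correct interplay of the three hypotheses. Local finiteness is used precisely to make $F$ finite, which is what both renders $\theta$ (completely) meet-irreducible and legitimises the passage to the finite set of coordinates $I_0$; the discriminator-variety fact that subalgebras of simple algebras are simple is what upgrades the closing surjection $B_{s_0}\to A$ to an isomorphism. The one genuinely delicate point is the meet-prime argument isolating a single coordinate $s_0$, which rests essentially on the distributivity of $\mathrm{Con}(F)$.
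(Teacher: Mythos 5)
Your proof is correct; the difference from the paper is one of presentation rather than substance. The paper's own proof is two sentences: since every subalgebra of a simple algebra in $\mathbf{M}_n$ is simple, $\mathbf{HS}(\mathcal{K})=\mathbf{IS}(\mathcal{K})$ (up to trivial algebras), and the conclusion is then quoted from \cite[Theorem 4.104]{McMcTa87}, a J\'{o}nsson-type theorem placing every finite subdirectly irreducible member of a locally finite congruence-distributive variety $\mathbf{V}(\mathcal{K})$ in $\mathbf{HS}(\mathcal{K})$. What you have written is a self-contained proof of precisely the special case of that citation which is needed here, followed by the same final step: local finiteness makes $F=F_{\mathbf{V}(\mathcal{K})}(k)$ finite; finiteness of $F$ legitimises the subdirect embedding into finitely many algebras $B_s\in\mathbf{S}(\mathcal{K})$; distributivity of $\mathrm{Con}(F)$ --- available since $\mathbf{M}_n$ is a subvariety of the congruence-distributive variety of $pm$-algebras --- makes the meet-irreducible $\theta$ meet-prime over the finite intersection $\bigcap_{s\in I_0}\eta_s=\triangle$, which is exactly the finite core of J\'{o}nsson's lemma; and the discriminator fact that subalgebras of simple algebras in $\mathbf{M}_n$ are simple upgrades the surjection $B_{s_0}\to A$ to an isomorphism, playing the same role it plays in the paper's reduction of $\mathbf{HS}$ to $\mathbf{IS}$. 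The citation buys brevity; your version buys independence from the reference and makes visible exactly where each of the three hypotheses (local finiteness, congruence distributivity, simplicity of subalgebras of simples) enters. Two points you leave implicit and could spell out: $B_{s_0}$ is non-trivial because it maps onto the non-trivial algebra $A$, so the simplicity claim does apply to it; and the degenerate case where $\mathcal{K}$ is empty (so $\mathbf{V}(\mathcal{K})$ is trivial) is vacuous, since by the paper's convention simple algebras are non-trivial.
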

\begin{proof}
We have already observed that any subalgebra of a simple algebra in $\mathbf{M}_n$ is also simple. So $\mathbf{HS}(\mathcal{K})=\mathbf{IS}(\mathcal{K})$. Now the conclusion follows immediately from \cite[Theorem 4.104]{ McMcTa87}.
\end{proof}

The set $Si_F=\{L_i\colon 0\leq i\leq 5\}\cup\{L_6(m,n)\colon 3\leq n<\omega \mbox { and } 0\leq m\leq n\}$ consists of precisely one algebra from each of the isomorphism classes of the finite simple algebras in $\mathbf{M}_1$. Since, by Theorem \ref{D3}, $\mathbf{M}_1$ is locally finite, any of its subvarieties is generated by its finite simple members. So
any subvariety of $\mathbf{M}_1$ is generated by some subset $\mathcal{K}$ of  $Si_F$. Moreover, in view of the last theorem, every finite simple   member of $\mathbf{V}(\mathcal{K})$ lies in $\mathbf{IS}(\mathcal{K})$. \\

As observed in Section \ref{SB}, if $P$ and $Q$ are the dual spaces of the $pm$-algebras $L$ and $K$, respectively, then $L$ is isomorphic to a subalgebra of $K$ if and only if there exists a surjective $pm$-morphism from $Q$ onto $P$.  If $L$ and $K$ are finite, the topology in $P$ and $Q$ is the discrete one and plays no role. Notice that for any  $pm$-morphism $\varphi\colon Q\to P$, we have $\varphi({\rm Min}(Q))\subseteq {\rm Min}(P)$ and $\varphi({\rm Max}(Q))\subseteq {\rm Max}(P)$ and, since $\varphi$ is order-preserving, if each $Q$ and $P$ is the disjoint union of the set of its minimal elements and the set of its maximal ones, we have $\varphi(x)<\varphi(y)$ whenever $x<y$. Moreover, for any surjective $pm$-morphism $\varphi\colon Q\to P$, we have $\varphi({\rm Min}(Q))={\rm Min}(P)$ and $\varphi({\rm Max}(Q))={\rm Max}(P)$.\\

Ultimately to describe $L_V(\mathbf{M}_1)$ it will be necessary to determine precisely when $L_6(p,q)$ is a subalgebra of $L_6(m,n)$ for all $m, n, p, q$ with $3\leq n<\omega$, $0\leq m\leq n$, $3\leq q<\omega$ and $0\leq p\leq q$. The next two lemmas \ref{E3A} and \ref{E3B} pave the way for this, namely Theorem \ref{E3}.

\begin{Lemma} \label{E3A}
Let $Q_6(I,S)$ and $Q_6(J,T)$ be finite. A map $\varphi\colon Q_6(I,S)\to Q_6(J,T)$ is a surjective $pm$-morphism if and only if the following conditions are satisfied\\
{\rm (1)} $\varphi(S)=T$ and $\varphi(\zeta(x))=\zeta(\varphi(x))$, for any $x\in S$.\\
{\rm (2)} $\varphi^{-1}(J)\subseteq I$.\\
{\rm (3)}  $\varphi(x)\neq\varphi(y)$, for distinct $x,y\in \varphi^{-1}(J)$.\\
{\rm (4)} If $x\in I\setminus \varphi^{-1}(J)$, then there exists $u\in S\setminus \varphi^{-1}(J)$ such that $u\neq x$ and $\varphi(u)=\varphi(x)$. 
\end{Lemma}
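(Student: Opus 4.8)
The plan is to reduce everything to finite combinatorics. Since $Q_6(I,S)$ and $Q_6(J,T)$ are finite, their topologies are discrete, so continuity of $\varphi$ is automatic and only the order relations, the Min-condition, and commutation with $\zeta$ remain to be checked. The one computation I would isolate at the start is the description of $\mathrm{Min}(\zeta(x))$ for $x\in S$, read directly off the defining relation $z<\zeta(x)\iff z\neq x \text{ or } z\notin I$:
\[
\mathrm{Min}(\zeta(x)) =
\begin{cases}
S\setminus\{x\} & \text{if } x\in I,\\
S & \text{if } x\notin I,
\end{cases}
\]
together with $\mathrm{Min}(x)=\{x\}$ for $x\in S$, and the analogous formulas in $Q_6(J,T)$. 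All four conditions will be read off from this. I would then dispose of (1): the identity $\varphi(\zeta(x))=\zeta(\varphi(x))$ on $S$ is, via $\zeta^2=\mathrm{id}$, exactly $\varphi\circ\zeta=\zeta\circ\varphi$, and given it, $\varphi(S)=T$ forces $\varphi(\zeta(S))=\zeta(T)$, so surjectivity is equivalent to $\varphi(S)=T$; conversely any surjective $pm$-morphism carries $S=\mathrm{Min}(Q_6(I,S))$ onto $T$ by the remark recorded just before the lemma. Thus (1) is equivalent to surjectivity together with commutation with $\zeta$, and I would assume (1) for the rest.

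Next I would treat order-preservation. The only strict comparabilities in $Q_6(I,S)$ are $x<\zeta(y)$ with $x,y\in S$, and under (1) the relation to verify is $\varphi(x)<\zeta(\varphi(y))$. Expanding both sides by the definitions of the two orders, $\varphi$ is order-preserving if and only if $(x\neq y \text{ or } x\notin I)\Rightarrow(\varphi(x)\neq\varphi(y)\text{ or }\varphi(x)\notin J)$ for all $x,y\in S$. The contrapositive says precisely that $\varphi(x)=\varphi(y)\in J$ forces $x=y$ and $x\in I$. Taking $y=x$ yields $\varphi^{-1}(J)\subseteq I$, which is (2), while the general statement says $\varphi$ is injective on $\varphi^{-1}(J)$, which is (3); conversely (2) and (3) recover the implication. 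Hence, given (1), order-preservation is equivalent to the conjunction of (2) and (3).

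The remaining and most delicate step is $\mathrm{Min}(\varphi(z))\subseteq\varphi(\mathrm{Min}(z))$. For minimal $z$ it holds trivially, so only $z=\zeta(x)$ with $x\in S$ matters, where, using (1), it reads $\mathrm{Min}(\zeta(\varphi(x)))\subseteq\varphi(\mathrm{Min}(\zeta(x)))$. If $x\notin I$ the right-hand side is $\varphi(S)=T$ and the inclusion is automatic. If $x\in I$ it becomes $\mathrm{Min}(\zeta(\varphi(x)))\subseteq\varphi(S\setminus\{x\})$, and here I would split on whether $t:=\varphi(x)$ lies in $J$. When $t\in J$, condition (3) already forces $x$ to be the \emph{unique} preimage of $t$ (all preimages of $t$ lie in $\varphi^{-1}(J)$), so $\varphi(S\setminus\{x\})=T\setminus\{t\}=\mathrm{Min}(\zeta(t))$ and the inclusion holds for free. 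When $t\notin J$ the left-hand side is all of $T$, so the inclusion amounts to $\varphi(S\setminus\{x\})=T$, which holds exactly when $t$ has a preimage other than $x$; since any such $u$ satisfies $\varphi(u)=t\notin J$, hence $u\in S\setminus\varphi^{-1}(J)$, this is precisely (4). Assembling the three equivalences yields the lemma.

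I expect the main obstacle to be exactly this last step: recognising that the Min-condition at a maximal point bifurcates according to whether $\varphi(x)\in J$, with (3) silently discharging the case $\varphi(x)\in J$ and (4) capturing the case $\varphi(x)\notin J$. The care required is in getting the inclusions in the right direction and in checking that the clause ``$u\in S\setminus\varphi^{-1}(J)$'' appearing in (4) is automatic from $\varphi(u)=\varphi(x)\notin J$ rather than an additional constraint.
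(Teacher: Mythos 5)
Your proof is correct and follows essentially the same route as the paper's: condition (1) comes from surjective $pm$-morphisms mapping $\mathrm{Min}$ onto $\mathrm{Min}$ together with commutation with $\zeta$, conditions (2) and (3) are exactly order-preservation, and condition (4) is exactly the $\mathrm{Min}$-condition at maximal points, with the same case split on whether $\varphi(x)\in J$. The only difference is presentational: the paper proves the two implications separately, while you package each stage as a conditional biconditional after computing $\mathrm{Min}(\zeta(x))$ explicitly; the mathematical content is identical.
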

\begin{proof}
We have that ${\rm Min}(Q_6(I,S))=S$, ${\rm Max}(Q_6(I,S))=\zeta(S)$, whilst ${\rm Min}(Q_6(J,T))=T$ and ${\rm Max}(Q_6(J,T))=\zeta(T)$. Moreover, $S$ and $\zeta(S)$ are disjoint and so are $T$ and $\zeta(T)$.\\

Suppose $\varphi\colon Q_6(I,S)\to Q_6(J,T)$ is a surjective $pm$-morphism. \\

(1) It is obvious since $\varphi(S)=\varphi({\rm Min}(Q_6(I,S)))={\rm Min}(Q_6(J,T))=T$ and $\varphi \circ \zeta=\zeta\circ\varphi$.\\

(2) Let $x\in \varphi^{-1}(J)$. Then $\varphi(x)\in J$ and, consequently, $x\in S$ and  $\varphi(x)\not <\zeta(\varphi(x))=\varphi(\zeta(x))$.  If $x\notin I$, then $x<\zeta(x)$ and,  consequently, $\varphi(x)<\varphi(\zeta(x))$, a contradiction. Thus, $x\in I$.\\

(3) Let $x,y\in \varphi^{-1}(J)$ be such that $x\neq y$. Then $x<\zeta (y)$ and, consequently, $\varphi(x)<\varphi(\zeta (y))=\zeta(\varphi (y))$. If $\varphi (x)=\varphi(y)$, we would have 
$\varphi(x)<\zeta(\varphi (x))$, a contradiction, since $\varphi(x)\in J$. Thus, $\varphi (x)\neq\varphi(y)$.\\

(4) Suppose that $x\in I\setminus \varphi^{-1}(J)$. That is, $x\in I$ and $\varphi(x)\notin J$. Then $x\not < \zeta(x)$ and $\varphi(x)<\zeta(\varphi(x))=\varphi(\zeta(x))$. So,  $\varphi(x)\in {\rm Min}(\varphi(\zeta(x)))=\varphi({\rm Min}(\zeta(x)))$ and, consequently, there exists $u\in {\rm Min}(\zeta(x))$ such that $\varphi(x)=\varphi(u)$. It is obvious that $u\in S$ and, as $\varphi(u)=\varphi(x)\notin J$, we have that $u\in S\setminus \varphi^{-1}(J)$. From $u<\zeta(x)$ and $x\not < \zeta(x)$, it follows that $u\neq x$.\\

Conversely, suppose that $\varphi$ satisfies (1) - (4). By (1), $\varphi$ is onto and $\varphi\circ \zeta=\zeta\circ \varphi$. To prove that $\varphi$ is order-preserving, consider $x,y\in S$ such that $x<\zeta (y)$. If $\varphi(x)\not <\varphi(\zeta (y))=\zeta (\varphi(y))$, then $\varphi(x)=\varphi(y)\in J$ and, applying (2) and (3), $x=y\in \varphi^{-1}(J)\subseteq I$, a contradiction, since $x<\zeta (y)$.  Finally, let $x\in S$. As $S={\rm Min}(Q_6(I,S))$, $T={\rm Min}(Q_6(J,T))$ and, by (1), $\varphi(S)=T$, we have that  ${\rm Min}(\varphi(x))\subseteq \varphi({\rm Min} (x))$ trivially. To prove that ${\rm Min}(\varphi(\zeta(x)))\subseteq \varphi({\rm Min} (\zeta(x)))$, take $z\in {\rm Min}(\varphi(\zeta(x)))$. Then $z\in T=\varphi(S)$ and $z<\varphi(\zeta(x))=\zeta (\varphi(x))$. Let $y\in S$ be such that $z=\varphi(y)$ and suppose that $y\not < \zeta(x)$. Then $y=x\in I$ and $\varphi(x)=z<\zeta (\varphi(x))$. So $x\notin \varphi^{-1}(J)$. Applying (4), there exists $u\in S\setminus \varphi^{-1}(J)$ such that $u\neq x$ and $\varphi(u)=\varphi(x)=z$. Since $u\neq x$, we have $u<\zeta(x)$ and then $z\in \varphi({\rm Min} (\zeta(x)))$. So,  ${\rm Min}(\varphi(\zeta(x)))\subseteq \varphi({\rm Min} (\zeta(x)))$.                  Thus, $\varphi$ is a surjective $pm$-morphism.
\end{proof}

It is straightforward to conclude that for $3\leq  k\leq n<\omega$ and $0\leq m\leq k$, there is a surjective $pm$-morphism from $Q_6(m,n)$ onto $Q_6(m,k)$. Simply apply the previous lemma to any $\varphi\colon Q_6(I,S)\to Q_6(J,T)$ where $|I|=|J|$ and $|S\setminus I| \geq |T\setminus J|$ and $\varphi_{\restrict I}\colon I\to J$ is a bijection and $\varphi_{\restrict (S\setminus I)}\colon S\setminus I \to T\setminus J$ is an onto mapping. \\

Now we introduce some notation.
 For each $i$, $0\leq i\leq 5$, and each pair $(m,n)$ such that $3\leq n <\omega$ and $0\leq m\leq n$, let 
\begin{center}
$\mathbf{V}_i=\mathbf{V}(L_i)$ and $\mathbf{V}_{m,n}=\mathbf{V}(L_6(m,n))$.
\end{center}

Also let
 
\begin{center}
 $\mathbf {V}_{\omega ,\omega} = \mathbf{V}( \{L_6(m,n)\colon 3\leq n<\omega \mbox{ and } 0\leq m<n \})$,
\end{center} 
and, for each $ m<\omega$,
\begin{center}
 
$\mathbf{ V}_{m ,\omega} = \mathbf{V}(\{L_6(m,n)\colon 3\leq n<\omega \mbox{ and }  
m < n\})$.
\end{center}

\begin{Theorem} \label{E2}
$L_V({\mathbf{K_1}})$ is isomorphic to an $\omega +1$ chain.
\end{Theorem}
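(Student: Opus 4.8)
The plan is to reduce the computation of $L_V(\mathbf{K}_1)$ to the combinatorics of embeddings among the finite simple algebras of $\mathbf{K}_1$. Since $\mathbf{K}_1\subseteq\mathbf{M}_1$ and $\mathbf{M}_1$ is locally finite (Theorem \ref{D3}), the variety $\mathbf{K}_1$ is itself locally finite; as every subdirectly irreducible algebra in $\mathbf{M}_n$ is simple, every subvariety of $\mathbf{K}_1$ is generated by its finite simple members. Moreover, by Theorem \ref{E0} together with the fact that a subalgebra of a simple $\mathbf{M}_n$-algebra is simple, for any class $\mathcal{K}$ of finite simple algebras the finite simple members of $\mathbf{V}(\mathcal{K})$ are exactly those in $\mathbf{IS}(\mathcal{K})$. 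Consequently, assigning to each subvariety $\mathbf{W}$ its set $\mathrm{Si}(\mathbf{W})$ of isomorphism classes of finite simple members gives an inclusion-preserving and inclusion-reflecting bijection between $L_V(\mathbf{K}_1)$ and the family of subalgebra-closed sets of finite simple $\mathbf{K}_1$-algebras, that is, the down-sets of the poset of these algebras ordered by embeddability. It therefore suffices to show this poset is an $\omega$-chain, since the lattice of down-sets of an $\omega$-chain is an $\omega+1$ chain.

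First I would pin down the finite simple algebras of $\mathbf{K}_1$. By Corollary \ref{D2A} the finite simple algebras of $\mathbf{M}_1$ are, up to isomorphism, $L_0,\dots,L_5$ and $L_6(m,n)$ with $3\le n<\omega$, $0\le m\le n$; and a simple $\mathbf{M}_1$-algebra lies in $\mathbf{K}_1$ exactly when it is Kleene, which by the observation recorded after Corollary \ref{D2A} happens only for the dual spaces $Q_0$, $Q_2$, $Q_5$ and $Q_6(\emptyset,S)$. Hence the finite simple algebras of $\mathbf{K}_1$ are precisely $L_0$, $L_2$, $L_5$ and $L_6(0,n)$ for $3\le n<\omega$. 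It is useful to note that $Q_2$ and $Q_5$ are the spaces $Q_6(\emptyset,S)$ with $|S|=1$ and $|S|=2$ respectively (the defining order of $Q_6(\emptyset,S)$ makes sense for any $|S|$), so that the list is $L_0$ together with a single one-parameter family $E(Q_6(\emptyset,S))$, $|S|\ge 1$.

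Next I would determine the embeddability order via Priestley duality: $L\hookrightarrow K$ iff there is a surjective $pm$-morphism from the dual space of $K$ onto that of $L$. The algebra $L_0\cong\mathbf 2$ embeds into every non-trivial member (collapse any $pm$-space onto the single fixed point of $Q_0$) and has no proper non-trivial subalgebra, so it is the least element. For the family $E(Q_6(\emptyset,S))$ I would apply Lemma \ref{E3A} with $I=J=\emptyset$: conditions (2)--(4) then become vacuous, so a surjective $pm$-morphism $Q_6(\emptyset,S)\to Q_6(\emptyset,T)$ exists precisely when there is an onto map $S\to T$, i.e.\ whenever $|S|\ge|T|$ (the remark after Lemma \ref{E3A} supplies these maps for $|S|\ge 3$, and the smallest cases are exhibited directly by the same collapsing maps). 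Since a surjective $pm$-morphism is onto and $|Q_6(\emptyset,S)|=2|S|$, no embedding runs the other way. This yields the $\omega$-chain
\[ L_0 \;<\; L_2 \;<\; L_5 \;<\; L_6(0,3) \;<\; L_6(0,4) \;<\; \cdots \]
as the poset of finite simple $\mathbf{K}_1$-algebras under embeddability.

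Finally, its down-sets are $\emptyset\subset\{L_0\}\subset\{L_0,L_2\}\subset\{L_0,L_2,L_5\}\subset\cdots$ together with the whole chain on top; under the correspondence above these are, respectively, the trivial variety, $\mathbf{V}(L_0)$, $\mathbf{V}(L_2)$, $\mathbf{V}(L_5)$, $\mathbf{V}(L_6(0,3)),\dots$, and $\mathbf{K}_1$ itself, an $\omega+1$ chain. The main obstacle is the third step: showing that the finite simple Kleene algebras are genuinely linearly ordered by embeddability. This hinges both on the identification that the Kleene simples collapse into the single family $Q_6(\emptyset,S)$ (so that the exceptional spaces $Q_2,Q_5$ are absorbed and none of the incomparable pairs arising from a non-empty $I$ can occur) and on the cardinality argument that forbids any reverse embedding, so that the chain does not degenerate.
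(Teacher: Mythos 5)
Your proposal is correct and follows essentially the same route as the paper: it uses local finiteness (Theorem \ref{D3}) and Theorem \ref{E0} to reduce $L_V(\mathbf{K}_1)$ to down-sets of the embeddability poset of the finite simple Kleene algebras $L_0, L_2, L_5, L_6(0,n)$, and then establishes the $\omega$-chain via surjective $pm$-morphisms (Lemma \ref{E3A} and the collapsing maps) together with the cardinality obstruction to reverse embeddings, exactly as the paper does. The only difference is presentational: you make the down-set correspondence explicit and absorb $Q_2$ and $Q_5$ into the family $Q_6(\emptyset,S)$, which the paper instead handles by directly exhibiting the maps $Q_6(0,3)\to Q_5\to Q_2\to Q_0$.
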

\begin{proof}
Analysing the algebras of $Si_F$, we conclude that only $L_0, L_2, L_5$ and $L_6(0,n)$, for any $3\le n<\omega$, are in $\mathbf{ K}_1$.
As we have just observed, for $3\leq n<m$, there is a surjective $pm$-morphism from $Q_6(0,m)$ onto $Q_6(0,n)$, but, obviously, there is no surjective map from $Q_6(0,n)$ onto $Q_6(0,m)$. So, $L_6(0,n)\in \mathbf{IS}(L_6(0,m))$, but $L_6(0,m)\notin \mathbf{IS}(L_6(0,n))$ and, consequently, ${\mathbf V}_{0,n}< {\mathbf V}_{0,m}$.
It is also clear that  there are surjective $pm$-morphisms from $Q_6(0,3)$ onto $Q_5$, from $Q_5$ onto $Q_2$, and from $Q_2$ onto $Q_0$, but not the other way around.  That is, $L_V({\mathbf{K_1}})$ is isomorphic to the $\omega +1$ chain
\[ {\mathbf T}\covered{\mathbf V}_0\covered{\mathbf V}_2
\covered{\mathbf V}_5
\covered{\mathbf V}_{0,3}
\covered\ldots
\covered{\mathbf V}_{0,n}
\covered\ldots <
{\mathbf{K_1}}, \]
where
$\mathbf{T}$ is the trivial subvariety.
\end{proof}

Observe that, in terms of the preceding notation, ${\mathbf{K_1}} = {\mathbf V}_{0,\omega}$.\\

The lattice $L_V({\mathbf{M_1}})$ is a little more complex. Our first goal is to analyse when $L_6(p,q)\in \mathbf{V}_{m,n}$, that is, when $L_6(p,q)\in \mathbf{IS}(L_6(m,n))$.

\begin{Lemma} \label{E3B}
Let $Q_6(I,S)$ and $Q_6(J,T)$ be finite and $\varphi\colon Q_6(I,S)\to Q_6(J,T)$ be a surjective $pm$-morphism. Let $x, y, u, v$ be distinct elements of $S$ such that $x, y\in I\setminus \varphi^{-1}(J)$, $u, v\in S\setminus \varphi^{-1}(J)$ and $\varphi(u)=\varphi(x)\neq \varphi(y)=\varphi (v)$. If, for any finite set $T_1$ that properly contains $T$, there is no surjective $pm$-morphism from $Q_6(I,S)$ onto $Q_6(J,T_1)$, then $u \in I\setminus \varphi^{-1}(J)$ or $ v\in I\setminus \varphi^{-1}(J)$.
\end{Lemma}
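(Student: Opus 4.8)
The plan is to argue by contraposition: assuming $u\notin I$ and $v\notin I$, I will construct a surjective $pm$-morphism from $Q_6(I,S)$ onto some $Q_6(J,T_1)$ with $T_1\supsetneq T$, contradicting the hypothesis. Note first that, since $u,v\in S\setminus\varphi^{-1}(J)$ already, the desired conclusion is simply that $u\in I$ or $v\in I$. Before constructing anything, I would recast Lemma \ref{E3A} in terms of the fibres $F_t=\varphi^{-1}(t)\cap S$, $t\in T$. Conditions (1)--(4) there say exactly that $\{F_t\}_{t\in T}$ is a partition of $S$ into $|T|$ nonempty blocks in which the blocks over $J$ are precisely the singletons contained in $I$ (so there are exactly $|J|$ of them), while every block over $T\setminus J$ that meets $I$ has at least two elements. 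Conversely, any partition of $S$ labelled by a set $T_1\supseteq J$ with these properties defines, via $\psi\circ\zeta=\zeta\circ\psi$, a surjective $pm$-morphism onto $Q_6(J,T_1)$. Hence it suffices to produce a partition of $S$ into $|T|+1$ blocks having exactly $|J|$ singleton-in-$I$ blocks.

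Write $a=\varphi(x)=\varphi(u)$ and $b=\varphi(y)=\varphi(v)$. Since $x,y\in I\setminus\varphi^{-1}(J)$ we have $a\neq b$ and $a,b\in T\setminus J$, and $F_a\supseteq\{x,u\}$, $F_b\supseteq\{y,v\}$ each have size at least $2$. I replace the two blocks $F_a,F_b$ by the three blocks
\[ B_1=(F_a\cup F_b)\setminus\{u,v\},\qquad B_2=\{u\},\qquad B_3=\{v\}, \]
leaving every other fibre untouched. Concretely, define $\psi$ to agree with $\varphi$ off $F_a\cup F_b$, to send $B_1\mapsto a$, $u\mapsto b$, $v\mapsto c$ for a new point $c\notin T$, and to extend by $\psi\circ\zeta=\zeta\circ\psi$. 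Since $B_1\ni x$, this partition has $|T|+1$ nonempty blocks, giving the target $T_1=T\cup\{c\}\supsetneq T$ with $|T_1|\geq 4$.

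It then remains to verify, via Lemma \ref{E3A}, that $\psi$ is a surjective $pm$-morphism onto $Q_6(J,T_1)$. Surjectivity holds and conditions (2) and (3) are routine, since the fibres over $J$ are unchanged and $a,b,c\notin J$. The delicate point is (4): for $s\in I\setminus\psi^{-1}(J)$ I must exhibit a partner in the same fibre. For the untouched fibres and for $B_1$ a partner is immediate, the latter because $B_1$ contains the two distinct $I$-points $x,y$ and so has size at least $2$. The two new singleton fibres $B_2=\{u\}$ and $B_3=\{v\}$ are exactly where the hypothesis is consumed: because $u\notin I$ and $v\notin I$, neither $u$ nor $v$ lies in $I\setminus\psi^{-1}(J)$, so (4) imposes no demand on them and cannot fail. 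This yields the promised morphism and the contradiction, establishing $u\in I$ or $v\in I$.

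I expect the main obstacle to be the choice of the block $B_1$, which is the whole point of the construction. The naive move --- splitting a single fibre, say $F_a$, into $F_a\setminus\{u\}$ and $\{u\}$ --- fails precisely when $F_a=\{x,u\}$, because then $F_a\setminus\{u\}=\{x\}$ is a \emph{new} singleton-in-$I$ block, raising the count of such blocks from $|J|$ to $|J|+1$ and so producing a morphism onto $Q_6(J',T_1)$ with $|J'|=|J|+1$ rather than the required $J$. Merging the $I$-parts of $F_a$ and $F_b$ into the single block $B_1$ keeps $x$ and $y$ together, forcing $|B_1|\geq 2$, and is exactly what keeps the number of singleton-in-$I$ blocks fixed at $|J|$. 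The hypotheses that $u,v$ are partners of $x,y$ over \emph{distinct} values $a\neq b$, and that both lie outside $I$, are precisely what this repartitioning requires.
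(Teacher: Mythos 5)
Your proof is correct and is essentially the paper's own argument: the same contradiction via Lemma \ref{E3A}, with the identical map $\psi$ (merging $(F_a\cup F_b)\setminus\{u,v\}$ onto $\varphi(x)$, sending $u\mapsto\varphi(y)$ and $v$ to a fresh point), and the same use of $u,v\notin I$ to discharge condition (4) on the new singleton fibres. Your recasting of Lemma \ref{E3A} as a statement about partitions of $S$ into fibres is only a presentational variant of the paper's direct verification of conditions (1)--(4).
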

\begin{proof}
Suppose that for any finite set $T_1$ that properly contains $T$, there is no surjective $pm$-morphism from $Q_6(I,S)$ onto $Q_6(J,T_1)$. 
As $\varphi$ is a surjective $pm$-morphism, it satisfies conditions (1)-(4) of Lemma \ref{E3A}. Notice that $\varphi(x), \varphi(y)\notin J$, since $x, y\in I\setminus \varphi^{-1}(J)$. As $\varphi(u)=\varphi(x)$ and $\varphi(v)=\varphi(y)$, we have that $u,v\notin \varphi^{-1}(J)$. Thus it only remains to prove that $u\in I$ or $v\in I$.
 Suppose that $u\notin I$ and $v\notin I$. Choose some completely new element $w\notin T\cup\zeta(T)$. Consider $Q_6(J,T_1)$ where $T_1=T\cup\{w\}$. Define $\psi\colon Q_6(I,S)\to Q_6(J,T_1)$ by, for $z\in S$, 
\begin{center}
$\psi(z)=\begin{cases} \varphi(z) &\text{ if } \varphi(z)\notin\{\varphi(x), \varphi(y)\}\\ \varphi(x) &\text{ if } \varphi(z)\in\{\varphi(x), \varphi(y)\} \text{ and } z\notin\{u,v\}\\
\varphi(y) &\text{ if } z=u\\ w &\text{ if } z=v, \end{cases}$\\

\hspace{-9.5cm} and \\
\hspace{-5.5cm}  $\psi(\zeta(z))=\zeta (\psi(z))$.
\end{center}

We will prove that $\psi$ satisfies (1)-(4) of Lemma \ref{E3A}, obtaining a contradiction.\\

(1) We have that $\psi(v)=w$, $\psi(u)=\varphi(y)$ and $\psi(x)=\varphi(x)$. Let $b\in T_1\setminus \{w,\varphi(y), \varphi(x)\}$. Then $b\in T\setminus \{\varphi(y), \varphi(x)\}$ and, since $\varphi(S)=T$, there exists $z\in S$ such that $\varphi(z)=b\notin \{\varphi(y), \varphi(x)\}$ and we have $\psi(z)=\varphi(z)=b$. So $\psi(S)=T_1$. By definition, $\psi(\zeta(z))=\zeta (\psi(z))$ for any $z\in S$.\\
 
(2) Although $\psi^{-1}(J)\subseteq\varphi^{-1}(J)$ is sufficient to conclude (2), we will show that $\psi^{-1}(J)=\varphi^{-1}(J)$ and that $\psi(z)=\varphi(z)$, for $z\in \psi^{-1}(J)$, which will be used to prove (3) and (4). If $z\in \varphi^{-1}(J)$, then $\varphi(z)\in J$ and so $\varphi(z)\notin\{\varphi(x),\varphi(y)\}$. Thus, $\psi(z)=\varphi(z)\in J$ and $z\in\psi^{-1}(J)$.
 Conversely, let $z\in \psi^{-1}(J)$. Then $\psi(z)\in J$. If $\varphi(z)\in  \{\varphi(x), \varphi(y)\}$, we would have $\psi(z)\in \{w, \varphi(y), \varphi(x)\}$, a contradiction, since none of the elements of this set belongs to $J$. So  $\varphi(z)\notin  \{\varphi(x), \varphi(y)\}$ and, consequently, $\varphi(z)=\psi(z)\in J$. Thus $\psi^{-1}(J)= \varphi^{-1}(J)$. Consequently, $\psi^{-1}(J)=\varphi^{-1}(J)\subseteq I$.\\
 
(3)  Let $z_1, z_2\in \psi^{-1}(J)$ be distinct. We just proved in (2) that $\psi^{-1}(J)=\varphi^{-1}(J)$.   Also, from the proof of (2), we know that $\psi(z_1)=\varphi(z_1)$ and $\psi(z_2)=\varphi(z_2)$. As, by (3) of Lemma \ref{E3A}, $\varphi(z_1)\neq\varphi(z_2)$, we have $\psi(z_1)\neq\psi(z_2)$.\\

(4) Suppose that $z\in I\setminus \psi^{-1}(J)$. As $u\notin I$ and $v\notin I$ by hypothesis, we have $z\notin\{u, v\}$. We consider two cases.  First, suppose $\varphi(z)\in \{\varphi(x), \varphi (y)\}$, then 
$\psi(z)=\varphi(x)=\psi(x)=\psi(y)$ by definition of $\psi$, and, since $x\neq y$, we have $z\neq x$ or $z\neq y$. Both $x,y\in I\setminus \varphi^{-1}(J)=I\setminus\psi^{-1}(J)$.  Now suppose $\varphi(z)\notin \{\varphi(x), \varphi (y)\}$. Then $\psi(z)=\varphi(z)$ by definition of $\psi$.  By the proof of (2), we know that $\psi^{-1}(J)=\varphi^{-1}(J)$ and, since $z\in I\setminus \psi^{-1}(J)$, we have $z\in I\setminus \varphi^{-1}(J)$. As $\varphi$ is a surjective $pm$-morphism, applying (4) of Lemma \ref{E3A}, there exists $z_0\in S\setminus \varphi^{-1}(J)=S\setminus \psi^{-1}(J)$ such that $z_0\neq z$ and $\varphi(z_0)=\varphi(z)$.  So $\varphi(z_0)\notin \{\varphi(x), \varphi (y)\}$ and we have $\psi(z_0)=\varphi(z_0)=\varphi(z)=\psi(z)$.
\end{proof}

Recall that, for a real number $r$, the largest integer that does not exceed $r$ is called the {\it floor} of $r$ and denoted $\left \lfloor{r}\right \rfloor $.\\

We are now ready to determine when $L_6(p,q)\in \mathbf{V}_{m,n}$ and, as can be seen, the answer is rather curious.

\begin{Theorem} \label{E3}
Let $3\leq n, q <\omega $, $0\leq m\leq n$ and $0\leq p\leq q$. Then the following are equivalent:\\
{\rm (1)} $L_6(p,q)\in \mathbf{V}_{m,n}$, \\ 
{\rm (2)} $p\le m$, and\\
 either\\
\indent {\rm (a)}  $p = q= m = n$,\\
\noindent or\\
\indent {\rm (b)} $p < q$ and $ q\leq p+ \left \lfloor{(m-p)/2}\right \rfloor + n-m$.

\end{Theorem}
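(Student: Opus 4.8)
The plan is to translate the algebraic statement into a purely combinatorial one about maps of finite posets and then count. Since $\mathbf{M}_1$ is locally finite (Theorem \ref{D3}) and $L_6(p,q)$ is a finite simple algebra, Theorem \ref{E0} gives that $L_6(p,q)\in\mathbf{V}_{m,n}$ if and only if $L_6(p,q)\in\mathbf{IS}(L_6(m,n))$, which by the duality of Section \ref{SB} holds if and only if there is a surjective $pm$-morphism $\varphi\colon Q_6(m,n)\to Q_6(p,q)$. Writing $Q_6(m,n)=Q_6(I,S)$ with $|S|=n$, $|I|=m$, and $Q_6(p,q)=Q_6(J,T)$ with $|T|=q$, $|J|=p$, Lemma \ref{E3A} says that such a $\varphi$ is determined by its restriction $f=\varphi\restr S\colon S\to T$ and that its existence is equivalent to the existence of a surjection $f$ with $f^{-1}(J)\subseteq I$, with $f$ injective on $f^{-1}(J)$, and with every $x\in I\setminus f^{-1}(J)$ sharing its value with some $u\in S\setminus f^{-1}(J)$, $u\neq x$. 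Thus the whole theorem becomes a counting problem.

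First I would extract the easy consequences. As $f$ is onto and injective on $A:=f^{-1}(J)$, the set $A$ is in bijection with $J$, so $|A|=p$, and $A\subseteq I$ forces $p\leq m$, the standing hypothesis of part (2). Next, the fibres of $f$ over $T\setminus J$ partition $S\setminus A$ (of size $n-p$) into exactly $q-p$ nonempty blocks, and the partner condition says precisely that each of the $m-p$ elements of $I\setminus A$ lies in a block of size at least $2$. Hence a morphism exists if and only if a set of $m-p$ \emph{special} and $n-m$ \emph{ordinary} elements can be partitioned into exactly $q-p$ nonempty blocks with no special element alone. The degenerate case $q=p$ forces $S\setminus A=\emptyset$, whence $n=p$ and then $m=n=p=q$: this is alternative (a).

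For necessity of alternative (b) (when $p<q$) the crux is to bound the number of blocks. A block holding a lone special must also hold a partner, so admissible partitions are forced to pair specials together, and the maximum number of blocks is $\lfloor(m-p)/2\rfloor+(n-m)$, the floor coming from the parity of $m-p$. I would obtain this through Lemma \ref{E3B}: choosing a morphism whose target has $|T|$ maximal among all targets with $|J|=p$, Lemma \ref{E3B} forces at most one special fibre to contain a single special, all others containing at least two; counting special and ordinary fibres then yields $q-p\leq\lfloor(m-p)/2\rfloor+(n-m)$, which is (b). For sufficiency I would reverse the construction: build a valid partition attaining the maximum $\lfloor(m-p)/2\rfloor+(n-m)$ blocks by pairing specials (using one size-$3$ special block when $m-p$ is odd and no ordinary is available) and leaving ordinaries as singletons, and then, since $q-p$ satisfies $1\leq q-p\leq\lfloor(m-p)/2\rfloor+(n-m)$, merge blocks---which preserves admissibility since block sizes only grow---until exactly $q-p$ blocks remain. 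Reading off $f$ and verifying (1)--(4) of Lemma \ref{E3A} gives the morphism; note $\lfloor(m-p)/2\rfloor\leq m-p$ yields $q\leq n$ so the blocks are available, and $q\geq 3$ is assumed.

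The main obstacle is the two-sided block count: pinning down the exact maximum number of admissible blocks and correctly tracking the parity of $m-p$ that produces the floor function, together with the boundary cases ($m-p$ odd with $n=m$, and the merging required to realise every intermediate value of $q-p$). Everything else reduces to routine verification of the $pm$-morphism conditions of Lemma \ref{E3A}.
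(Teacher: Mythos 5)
Your proposal is correct and takes essentially the same route as the paper: the same reduction (Theorem \ref{E0}, local finiteness of $\mathbf{M}_1$, duality) to the existence of a surjective $pm$-morphism from $Q_6(m,n)$ onto $Q_6(p,q)$, the same characterisation of such morphisms via Lemma \ref{E3A}, the same maximal-$q$ argument with Lemma \ref{E3B} to force the upper bound $q\leq p+\left\lfloor (m-p)/2\right\rfloor+n-m$, and essentially the same pairing construction for sufficiency. Your block-partition reformulation and the merge-blocks device are a tidier packaging of, respectively, the paper's count via the equivalence relation $\theta$ on $I\setminus\varphi^{-1}(J)$ and its composition with the surjections $Q_6(p,t)\to Q_6(p,q)$ for $t\geq q$, but the underlying argument is identical.
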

\begin{proof}
Say $Q_6(m,n) = Q_6(I,S )$ where  $I \subseteq S$, $|I| = m$ and $|S| = n$, and
$Q_6(p,q) = Q_6(J,T )$ where $J\subseteq T$, $|J| = p$ and $|T| = q$. We know that $L_6(p,q)\in \mathbf{V}_{m,n}$ if and only if there is a surjective $pm$-morphism from $Q_6(I,S)$ onto $Q_6(J,T)$.
Suppose that (1) holds and let $\varphi$ be such a surjective $pm$-morphism. Then, by (1) of Lemma \ref{E3A}, $\varphi(S)=T$ and $\varphi(\zeta(S))=\zeta(T)$. It is obvious that   $q\leq n$.  Moreover, $\varphi(\varphi^{-1}(J))=J$ and, applying (3) of Lemma \ref{E3A}, we have $p=|J|=|\varphi^{-1}(J)|$. But  $\varphi^{-1}(J)\subseteq I$, by (2) of the same lemma, and  then $p=|\varphi^{-1}(J)|\le |I|= m$. So it is always the case that $p\leq m$. Now we have 2 cases to consider.\\

Case 1:  $p = q$. Then $J=T$ and so, $\varphi^{-1}(J)=\varphi^{-1}(T)=S$. Thus $p=n$ and, since $p\leq m\leq n$, we conclude that $p=m=n$. So $p=q=m=n$, as required.\\

Case 2: $p < q$. To show that $q\leq p+ \left \lfloor{(m-p)/2}\right \rfloor + n-m$, we may consider that $q$ is the greatest integer for which there exists a surjective $pm$-morphism from $Q_6(m,n)$ onto $Q_6(p,q)$. As $\varphi(S)=T$, we have $\varphi(S\setminus \varphi^{-1}(J) )= T\setminus J$. Since $p=|\varphi^{-1}(J)|$ and $\varphi^{-1}(J)\subseteq I$, we have $|I\setminus \varphi^{-1}(J)|=m-p$.  \\

 If $m-p=0$, then $ \left \lfloor{(m-p)/2}\right \rfloor=0$ and
$p+ \left \lfloor{(m-p)/2} \right \rfloor + n-m=p+0+n-p=n$ and we have already observed  that $q\leq n$. \\

Suppose $m-p=1$, that is $|I\setminus \varphi^{-1}(J)|=1$. Then $I\setminus \varphi^{-1}(J)$ is a singleton, say $I\setminus \varphi^{-1}(J)=\{x\}$. By (4) of Lemma \ref{E3A}, there exists $u\in S\setminus \varphi^{-1}(J)$ such that $u\neq x$ and $\varphi(u)=\varphi(x)$. Thus, $q=|J|+|T\setminus J|=p+|\varphi(S\setminus \varphi^{-1}(J))|\leq p+ (|S\setminus \varphi^{-1}(J)|-1)=p+n-p-1=n-1=p+ \left \lfloor{(m-p)/2}\right \rfloor + n-m$.\\

 Finally, suppose that $m-p\geq 2$, that is $|I\setminus \varphi^{-1}(J)|\geq 2$. As  $\varphi^{-1}(J)\subseteq I$, we have $T\setminus J=\varphi(S\setminus \varphi^{-1}(J))=\varphi((I\setminus \varphi^{-1}(J))\cup (S\setminus I))$. Notice  that $|S\setminus I|=n-m$. Consider $\theta$ the equivalence relation on $I\setminus \varphi^{-1}(J)$ defined by $(x,y)\in\theta  \mbox{ iff } \varphi(x)=\varphi(y)$. Suppose that there are $k$ equivalence classes. Then $|T\setminus J|\leq k+n-m$. If each class has at least $2$ elements then $m-p= |I\setminus \varphi^{-1}(J)|\geq 2k$, implying $(m-p)/2\geq k$.  So  $\left \lfloor{(m-p)/2}\right \rfloor \geq k$. Thus, $q=|J|+|T\setminus J|\leq p+k+n-m \leq p+\left \lfloor{(m-p)/2}\right \rfloor+n-m$, as required. Now suppose that there exists a class with cardinality $1$. Then, as $|I\setminus \varphi^{-1}(J)|=m-p\geq 2$, we have that $k\geq 2$. Let $a_1/\theta, a_2/\theta, \ldots, a_k/\theta$ be the equivalence classes with $a_1/\theta=\{a_1\}$. For each $i$, $2\leq i\leq k$, we have $a_1, a_i\in I\setminus\varphi^{-1}(J)$ and $\varphi(a_1)\neq\varphi(a_i)$. By (4) of Lemma \ref{E3A}, there exists $u\in S\setminus \varphi^{-1}(J)$ such that $u\neq a_1$ and $\varphi(u)=\varphi(a_1)$ and there exists $v_i\in S\setminus \varphi^{-1}(J)$ such that $v_i\neq a_i$ and $\varphi(v_i)=\varphi(a_i)$. It is obvious that the elements $u, a_1, a_i, v_i$ are distinct and that $u\notin I$, as $a_1/\theta=\{a_1\}$.  By the maximality of $q$ and applying Lemma \ref{E3B} we conclude that $v_i\in I\setminus \varphi^{-1}(J)$ and consequently, $v_i\in a_i/\theta$. So, all the classes except $a_1/\theta$ have at least $2$ elements and $m-p=|I\setminus \varphi^{-1}(J)|\geq 2(k-1)+1=2k-1$ and $(m-p)/2\geq k-1/2$, so  $\left \lfloor{(m-p)/2}\right \rfloor \geq k-1$. Taking into account that $u\in S\setminus I$ and $\varphi(u)=\varphi(a_1)$, we have $q-p=|T\setminus J|=|\varphi((I\setminus \varphi^{-1}(J))\cup (S\setminus I))|\leq k+(n-m-1)=(k-1)+(n-m)\leq \left \lfloor{(m-p)/2}\right \rfloor+n-m$ and hence (2) follows.\\
 
Now, suppose (2) is true.  We wish to prove that there is a surjective $pm$-morphism from $Q_6(I,S)$ onto $Q_6(J,T)$.  If $p=q=m=n$, it is immediate. Now suppose that $p\leq m$ and $p< q\leq p+ \left \lfloor{(m-p)/2}\right \rfloor + n-m$. 
As for any $t\geq q$, there is a surjective $pm$-morphism from $Q_6(p,t)$ onto $Q_6(p,q)$, it is sufficient to prove the statement for $q= p+ \left \lfloor{(m-p)/2}\right \rfloor + n-m$.\\  

As $p\leq m=|I|$ consider $M\subseteq I$ such that $|M|=p=|J|$ and let $\alpha$ be a bijection from $M$ onto $J$.  Then $|I\setminus M|=m-p=2k$ or $|I\setminus M|=m-p=2k+1$, for some $k\geq 0$, and, in both cases, 
 $ \left \lfloor{(m-p)/2}\right \rfloor =k$.\\
  
 Suppose $k\geq 1$. Let $I\setminus M=\{a_1, \ldots, a_{k}\}\cup\{b_1, \ldots, b_k\}\cup B$, where $B=\emptyset$ if $|I\setminus M|=2k$, and $B=\{b\}$ if $|I\setminus M|=2k+1$. As $|T|=q= p+ k + n-m$ and $|J|=p$, $|T\setminus J|=k+n-m\geq k$. Choose $k$ (distinct) elements in $T\setminus J$, say $c_1, \ldots, c_k$, and let $C=\{c_1, \ldots, c_k\}$.  Let $\beta \colon  I\setminus M \to C$ be defined by $\beta(a_i)=\beta(b_i)=c_i$, for any $1\leq i\leq k$, and $\beta(b)=c_k$ if $B\neq \emptyset$.  As the sets $T\setminus (J\cup C)$ and $S\setminus I$ both have  cardinality $n-m$, consider a bijection $\gamma$ from $S\setminus I$ onto $T\setminus (J\cup C)$. Now let $\varphi\colon Q_6(I,S)\to Q_6(J,T)$ be defined by, for $x\in S$, 
\begin{center}
$\varphi(x)=\begin{cases} \alpha(x) \text{ if } x\in M\\ \beta(x) \text{ if } x\in I\setminus M \\
\gamma(x) \text{ if } x\in S\setminus I,  \end{cases}$\\

\hspace{-7cm}and \\
\hspace{-2cm}$\varphi(\zeta(x))=\zeta (\varphi(x))$.
\end{center}

By Lemma \ref{E3A}, $\varphi$ is a surjective $pm$-morphism.  \\

Now suppose $k=0$,  that is $m-p=0$ or $m-p =1$ and, consequently, $I=M$ or $I\setminus M$ is a singleton, say $I\setminus M=\{b\}$. As $\left \lfloor{(m-p)/2}\right \rfloor =0$, we have that $p+1\leq q=p+n-m$. Then $n-m\geq 1$ and $|T\setminus J|=q-p=n-m=|S\setminus I|$. Choose $c\in T\setminus J$ and let $\beta$ be a bijection from $ S\setminus I$ onto $T\setminus J$. Consider the map $\varphi\colon Q_6(I,S)\to Q_6(J,T)$ defined by, for $x\in S$, 
\begin{center}
$\varphi(x)=\begin{cases} \alpha(x) \text{ if } x\in M\\ 
c \;\;\;\;\;\,\text{ if } x\in I\setminus M\\ \beta(x) \text{ if } x\in S\setminus I, \end{cases}$\\

\hspace{-7cm}and \\
\hspace{-2cm}$\varphi(\zeta(x))=\zeta (\varphi(x))$. 
\end{center}

By Lemma \ref{E3A}, $\varphi$ is a surjective $pm$-morphism.
\end{proof}

\begin{Corollary} \label{E4} We have the following\\
{\rm (1)} For $3\leq p<\omega$, $3\leq n<\omega$ and $0\leq m\leq n$, $L_6(p,p)\in \mathbf{IS}(L_6(m,n))$ if and only if $p=m=n$.\\
{\rm (2)} $|L_V({\mathbf{M_1}})| = 2^\omega$.
\end{Corollary}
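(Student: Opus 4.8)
The plan is to derive both parts directly from Theorem~\ref{E3}, together with the local finiteness of $\mathbf{M}_1$ (Theorem~\ref{D3}) and the transfer principle of Theorem~\ref{E0}. For part~(1) I would simply specialise Theorem~\ref{E3} to the case $q=p$. Since $L_6(p,q)\in\mathbf{V}_{m,n}$ is by definition equivalent to $L_6(p,q)\in\mathbf{IS}(L_6(m,n))$, it suffices to read off when condition~(2) of that theorem holds with $q=p$. Branch~(b) requires $p<q$, which is impossible here, so only branch~(a) can occur; but (a) asserts $p=q=m=n$, which under $q=p$ collapses to $p=m=n$. Conversely $p=m=n$ trivially gives $L_6(p,p)\in\mathbf{IS}(L_6(p,p))$. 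Hence $L_6(p,p)\in\mathbf{IS}(L_6(m,n))$ if and only if $p=m=n$, as claimed.

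For part~(2), the crucial consequence of (1) is that the family $\mathcal{A}=\{L_6(p,p)\colon 3\le p<\omega\}$ is a countably infinite antichain: applying (1) with $m=n=q$ shows $L_6(p,p)\in\mathbf{IS}(L_6(q,q))$ precisely when $p=q$. I would then prove that the assignment $A\mapsto\mathbf{V}(A)$, sending a subset $A\subseteq\mathcal{A}$ to the subvariety it generates, is injective. Fix $A\subseteq\mathcal{A}$. Since $\mathbf{V}(A)\subseteq\mathbf{M}_1$ is locally finite and $A$ consists of simple algebras of $\mathbf{M}_1$, Theorem~\ref{E0} guarantees that every finite simple member of $\mathbf{V}(A)$ already lies in $\mathbf{IS}(A)$. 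In particular, for each $p$ we have $L_6(p,p)\in\mathbf{V}(A)$ if and only if $L_6(p,p)\in\mathbf{IS}(A)$, which by the antichain property of part~(1) holds exactly when $L_6(p,p)\in A$. Thus $A$ is recovered from $\mathbf{V}(A)$ as the set of members of $\mathcal{A}$ belonging to it, the map is injective, and $|L_V(\mathbf{M}_1)|\ge 2^{|\mathcal{A}|}=2^\omega$.

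The matching upper bound is routine: by Corollary~\ref{D3A}, $\mathbf{M}_1$ is generated by the countable set $Si_F$ of its finite simple algebras, and local finiteness ensures that every subvariety is generated by the collection of finite simple algebras it contains, a subset of $Si_F$; hence there are at most $2^{|Si_F|}=2^\omega$ subvarieties. Combining the two estimates gives $|L_V(\mathbf{M}_1)|=2^\omega$. I expect the only genuinely non-mechanical step to be the equivalence ``$L_6(p,p)\in\mathbf{V}(A)\iff L_6(p,p)\in A$'', which is exactly where the local finiteness of $\mathbf{M}_1$ (needed to invoke Theorem~\ref{E0}) and the antichain structure furnished by part~(1) are both indispensable; the remaining cardinality bookkeeping is straightforward.
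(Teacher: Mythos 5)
Your proposal is correct and takes essentially the same approach as the paper: part (1) is read off from Theorem \ref{E3} with $q=p$, and part (2) uses Theorem \ref{E0} (together with local finiteness from Theorem \ref{D3}) and the antichain $\{L_6(n,n)\colon 3\leq n<\omega\}$ furnished by (1) to show that distinct subsets generate distinct subvarieties. Your explicit upper-bound bookkeeping via $Si_F$ is left implicit in the paper but amounts to the same standard observation.
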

\begin{proof} (1) Immediate from the previous theorem.\\

(2)   Suppose $\mathcal{K}_1$ and $\mathcal{K}_2$ are distinct non-empty subsets of $\{L_6(n,n)\colon 3\leq n < \omega\}$.
 Without loss of generality, we may assume that there exists, say, $L_6(n,n)\in \mathcal {K}_1\setminus \mathcal {K}_2$ for some $3\leq n<\omega$.  If $L_6(n,n)\in {\mathbf V}(\mathcal {K}_2)$, then, by Theorem \ref{E0}, $L_6(n,n)\in \mathbf{IS}(\mathcal{K}_2)$, that is $L_6(n,n)\in \mathbf{IS} (L_6(m,m))$ for some $L_6(m,m)\in \mathcal {K}_2$, which is a contradiction, since $n\neq m$. So $L_6(n,n)\notin \mathbf{V}(\mathcal{K}_2)$ and $\mathbf{V}(\mathcal{K}_1)\neq \mathbf{V}(\mathcal{K}_2)$.
\end{proof}

Although $L_V({\mathbf{M_1}})$ is uncountable, with the aid of Theorem \ref{E3}, its structure may still be understood.\\

Observe that ${\mathbf V}_0\subseteq {\mathbf V}$ for any non-trivial ${\mathbf V}\in L_V({\mathbf{M_1}})$.\\

Further, since both $Q_1$ and $Q_3$ have an order component $Q$ for which $Q\cap \zeta (Q) = \emptyset$, we have that $L_1,L_3\not \in {\mathbf V}({\mathcal L})$ for ${\mathcal L}\subseteq Si_F\setminus \{L_1, L_3\}$.\\

First we consider the lattice of subvarieties of $\mathbf{L}=\mathbf{V}(L_0, L_1, L_2, L_3, L_4, L_5)$ which is a finite distributive lattice, whose non-zero join-irreducible elements are $\mathbf{V}_i$, $0\leq i \leq 5$. Inspection shows  that the poset of these elements is\\

\vspace*{0.2in}

\unitlength 1mm 
\linethickness{0.4pt}
\ifx\plotpoint\undefined\newsavebox{\plotpoint}\fi 
\begin{picture}(200.581,100.581)(-15,0)
\put(38,97.75){\circle*{2.5}}
\put(56.75,98.5){\circle*{2.5}}
\put(74,99){\circle*{2.5}}
\put(56.5,82.5){\circle*{2.5}}
\put(39,67.75){\circle*{2.5}}
\put(22.75,82.75){\circle*{2.5}}
\multiput(74,99)(-.03768577495,-.03370488323){942}{\line(-1,0){.03768577495}}
\multiput(38,97.75)(.0403761062,-.0337389381){452}{\line(1,0){.0403761062}}
\multiput(56.25,98.75)(.03125,-2.03125){8}{\line(0,-1){2.03125}}
\multiput(37.5,97.5)(-.0337078652,-.0342696629){445}{\line(0,-1){.0342696629}}
\multiput(22.75,83.25)(.0347826087,-.0336956522){460}{\line(1,0){.0347826087}}
\put(45.75,67.75){\makebox(0,0)[cc]{$\mathbf{V}_0$}}
\put(62,82){\makebox(0,0)[cc]{$\mathbf{V}_2$}}
\put(16.75,81.75){\makebox(0,0)[cc]{$\mathbf{V}_1$}}
\put(31.5,98){\makebox(0,0)[cc]{$\mathbf{V}_3$}}
\put(50.25,98.25){\makebox(0,0)[cc]{$\mathbf{V}_4$}}
\put(68.75,98.5){\makebox(0,0)[cc]{$\mathbf{V}_5$}}
\put(47,58){\makebox(0,0)[cc]{Figure $2$}}
\end{picture}

\vspace*{-2,0in}

The lattice of subvarieties of $\mathbf{L}$ is isomorphic to the lattice of its decreasing sets. We conclude, in particular, that the variety $\mathbf{L}=\mathbf{V}(L_0, L_1, L_2, L_3, L_4, L_5)$ has $14$ non-trivial subvarieties, namely:\\
$\mathbf{V}_i$, $0\leq i \leq 5$,
 $\mathbf{V}_1\vee \mathbf{V}_2$, $\mathbf{V}_1\vee \mathbf{V}_4$, $\mathbf{V}_1\vee \mathbf{V}_5$, $\mathbf{V}_3\vee \mathbf{V}_4$, $\mathbf{V}_3\vee \mathbf{V}_5$, $\mathbf{V}_4\vee \mathbf{V}_5$, $\mathbf{V}_1\vee \mathbf{V}_4\vee \mathbf{V}_5$, $\mathbf{V}_3\vee \mathbf{V}_4\vee \mathbf{V}_5=\mathbf{L}$.\\

For the sake of conciseness in the analysis that follows including Theorem \ref{E6}, and unless otherwise stated,  whenever we consider $L_6(m,n)$ or $\mathbf{V}_{m,n}$ it is implicit that $3\leq n<\omega$ and $0\leq m \leq n$.\\

 Now observe that\par
(1) $\mathbf{V}_4\subseteq \mathbf{V}_{m,n}$ if and only if $m\geq 1$,\par
(2) $\mathbf{V}_5\subseteq \mathbf{V}_{m,n}$ if and only if  $(m,n)\neq (3,3)$. \\

 Let $\mathbf{V}$ be a subvariety of $\mathbf{M}_1$ such that $L_1\notin \mathbf{V}$ and $L_6(m,n)\in \mathbf{V}$, for some $m,n$. Then $L_3\notin \mathbf{V}$.

 Consider 
\begin{center}
$S=\{n\colon L_6(n,n)\in \mathbf{V}\}$ and

\hspace{0.3cm}

$T=\{m\colon  L_6(m,n)\in \mathbf{V},\mbox{ for some } n>m\}$.
\end{center}

We have that $S\neq\emptyset$ or $T\neq\emptyset$.  We analyse  the various possibilities for $S$ and $T$.\\

{\bf (1)} $S=\emptyset$. Then $T\neq \emptyset$.\par
{\bf (1.1)} $T$ is infinite. Consider $L_6(p,q)$ with $p< q$. As $T$ is infinite, there exists $m\in T$ such that $m\geq 2q-p>p$. Since $m\in T$, we have that $L_6(m,n)\in \mathbf{V}$ for some $n>m$ and   $L_6(p,q)\in \mathbf{V}_{m,n}\subseteq \mathbf{V}$, by Theorem \ref{E3}. So $\mathbf{V}_{\omega, \omega}\subseteq \mathbf{V}$. Since  $L_0, L_2, L_4, L_5\in \mathbf{V}_{\omega,\omega}$ and $S=\emptyset$, we conclude that $\mathbf{V}=\mathbf{V}_{\omega, \omega}$.\par
{\bf (1.2)} $T$ is finite.  Let  $T=\{p_0,\ldots, p_s\}$ with $ s<\omega$ and $p_0>p_1>\cdots > p_s$. For each $i\in\{0,\ldots,s\}$, let $A_i=
\{n\colon  n>p_i \mbox{ and } L_6(p_i,n)\in \mathbf{V}\}$ which is a non-empty set.  \par
 
 {\bf (1.2.1)} $A_i$ is finite, for all $i\in\{0,\ldots,s\}$. Let $q_i={\rm max}\,A_i$. By Theorem \ref{E3}, for any $i\in\{0,\ldots,s\}$ and any $n\geq 3$ such that $p_i<n\leq q_i$, we have $L_6(p_i,n)\in \mathbf{V}_{p_i,q_i}\subseteq\mathbf{V}$.
  If $p_0=0$, then $T=\{0\}$ and,
  $\mathbf{V}=\mathbf{V}_{0,q_0}$ or $\mathbf{V}=\mathbf{V}_{0,q_0}\vee \mathbf{V}_4$, since $L_0, L_2, L_5\in \mathbf{V}_{0,q_0}$ but $L_4\notin \mathbf{V}_{0,q_0}$.  
 Let $p_0\geq 1$. As $1\leq p_0<q_0$, we have that $L_0, L_2, L_4, L_5\in \mathbf{V}_{p_0,q_0}$ and then  $\mathbf{V}=\mathbf{V}_{p_0,q_0}\vee \cdots\vee \mathbf{V}_{p_s,q_s}$. \par
 {\bf (1.2.2)} There exists $i$, $0\leq i\leq s$, such that $A_i$ is infinite.  Let $M$ be the least such $i$. For any $q>p_M$, $3\leq q<\omega$, there exists $n\in A_M$ such that $n\geq q$. So $L_6(p_M,n)\in\mathbf{V}$ and, by Theorem \ref{E3}, $L_6(p_M,q)\in \mathbf{V}_{p_M,n}\subseteq \mathbf{V}$. Thus, $\mathbf{V}_{p_M, \omega}\subseteq \mathbf{V}$. Moreover, for $0\leq p\leq p_M$ and $q>p$, $3\leq q<\omega$, taking $n$ such that $n>{\rm max}\,\{2, p_M, q+p_M-p\}$, we have that $L_6(p,q)\in \mathbf{V}_{p_M,n}\subseteq \mathbf{V}_{p_M, \omega}\subseteq \mathbf{V}$.  If $p_0=0$, then $T=\{0\}$ and $\mathbf{V}=\mathbf{V}_{0,\omega}$ or $\mathbf{V}=\mathbf{V}_{0,\omega}\vee \mathbf{V}_4$, since  $L_0, L_2, L_5\in \mathbf{V}_{0,\omega}$ and $L_4\notin \mathbf{V}_{0,\omega}$. Suppose $p_0\geq 1$. If $M=0$, then $\mathbf{V}=\mathbf{V}_{p_0,\omega}$, since $L_0, L_2, L_4, L_5\in \mathbf{V}_{p_0,\omega}$. If $M\geq 1$, as $A_0,\ldots, A_{M-1}$ are finite,  arguing as in (1.2.1), we get $\mathbf{V}=\mathbf{V}_{p_0,q_0}\vee \cdots\vee \mathbf{V}_{p_{M-1},q_{M-1}}\vee \mathbf{V}_{p_M,\omega}$. \par
 {\bf (2)} $S\neq \emptyset$.\par
 {\bf (2.1)} $S$ is infinite. Consider $L_6(p,q)$ with $p< q$. As $S$ is infinite, there exists $n\in S$ such that $n\geq 2q-p$. Then $p< n$. As $n\in S$,  $L_6(n,n)\in \mathbf{V}$. Since $p<n$, $p<q$ and  $p+ \left \lfloor{(n-p)/2}\right \rfloor + n-n \geq p+(q-p)= q$, we have that $L_6(p,q)\in \mathbf{V}_{n,n}$, by Theorem \ref{E3}. So $\mathbf{V}_{\omega, \omega}\subseteq \bigvee_{n\in S} \mathbf{V}_{n,n}$. As $L_0, L_2, L_4, L_5\in \mathbf{V}_{\omega,\omega}$, we conclude that $\mathbf{V}=\bigvee_{n\in S} \mathbf{V}_{n,n}$.\par

{\bf (2.2)} $S$ is finite. Let  $S=\{n_0,\ldots, n_k\}$ with $ k<\omega$ and $n_0>n_1>\cdots > n_k$.\par
{\bf (2.2.1)} $T=\emptyset$. Then, for any $m,n$ such that $L_6(m,n)\in \mathbf{V}$, we have $m=n$. If $n_0\geq 4$, then $L_6(2,3)\in \mathbf{V}_{n_0,n_0}\subseteq \mathbf{V}$, a contradiction. So $n_0=3$ and $S=\{3\}$. As $L_0, L_2, L_4\in \mathbf{V}_{3,3}$ but $L_5\notin \mathbf{V}_{3,3}$, we conclude that $\mathbf{V}=\mathbf{V}_{3,3}$ or $\mathbf{V}=\mathbf{V}_{3,3}\vee \mathbf{V}_5$.\par
{\bf (2.2.2)} $T\neq \emptyset$. We are going to apply what was done in (1). Notice that $L_4\in \mathbf{V}_{n,n}$, for any $3\leq n<\omega$. If $T$ is infinite, then $\mathbf{V}=\mathbf{V}_{\omega, \omega}\vee \mathbf{V}_{n_0,n_0}\vee \cdots\vee \mathbf{V}_{n_k,n_k}$.
If  $T=\{p_0,\ldots, p_s\}$ with $ s<\omega$ and $p_0>p_1>\cdots > p_s$, then $\mathbf{V}=\mathbf{V}_{p_0, q_0}\vee \cdots\vee \mathbf{V}_{p_s,q_s}\vee \mathbf{V}_{n_0,n_0}\vee \cdots\vee \mathbf{V}_{n_k,n_k}$, or $\mathbf{V}=\mathbf{V}_{p,\omega}\vee 
 \mathbf{V}_{n_0,n_0}\vee \cdots\vee \mathbf{V}_{n_k,n_k}$ for some $ p<\omega$, or 
 $\mathbf{V}=\mathbf{V}_{p_0, q_0}\vee \cdots\vee \mathbf{V}_{p_t,q_t}\vee  \mathbf{V}_{p,\omega}\vee\mathbf{V}_{n_0,n_0}\vee \cdots\vee \mathbf{V}_{n_k,n_k}$ for some $t <\omega$ and $p_0>\cdots >p_t>p$.\\

Losing some of the detail from above, the structure of $L_V(\mathbf{M}_1)$ may be summarised as follows.\\

Let $(\mathcal{K}_i\colon i<2^{\omega}) $ be the family of infinite subsets of $\{L_6(n, n)\colon 3\leq n<\omega\}$.\\

\begin{Theorem}\label{E6}  Every  non-trivial subvariety $\mathbf{V}$ of $\mathbf{M}_1$ may be expressed as follows\\
{\rm (1)} $\mathbf{V}_i$, $0\leq i \leq 5$,
 $\mathbf{V}_1\vee \mathbf{V}_2$, $\mathbf{V}_1\vee \mathbf{V}_4$, $\mathbf{V}_1\vee \mathbf{V}_5$, $\mathbf{V}_3\vee \mathbf{V}_4$, $\mathbf{V}_3\vee \mathbf{V}_5$, $\mathbf{V}_4\vee \mathbf{V}_5$, $\mathbf{V}_1\vee \mathbf{V}_4\vee \mathbf{V}_5$, $\mathbf{V}_3\vee \mathbf{V}_4\vee \mathbf{V}_5$.\\
 {\rm (2)} $\mathbf{V}_{3,3}\vee \mathbf{V}_5$, $\mathbf{V}_{0, \omega}\vee \mathbf{V}_4$ and  $\mathbf{V}_{0,q}\vee \mathbf{V}_4$ for some $3\leq q<\omega$.\\
{\rm (3)} $\mathbf{V}_{p_0,q_0}\vee\cdots\vee \mathbf{V}_{p_{s-1},q_{s-1}}\vee \mathbf{V}_{n_0,n_0}\vee\cdots\vee \mathbf{V}_{n_{k-1},n_{k-1}}$, for some $s, k<\omega$  such that $s+k>0$ where  $p_i<q_i$ for any $0\leq i<s$,  $p_0>\cdots >p_{s-1}$ and $n_0>\cdots>n_{k-1}$.\\
{\rm (4)} $\mathbf{V}_{p_0,q_0}\vee\cdots\vee \mathbf{V}_{p_{s-1},q_{s-1}}\vee \mathbf{V}_{p,\omega}\vee \mathbf{V}_{n_0,n_0}\vee\cdots\vee \mathbf{V}_{n_{k-1},n_{k-1}}$, for some $s, k<\omega$, $p<\omega$ where $p<p_i<q_i$, for any $0\leq i<s$,  $p_0>\cdots >p_{s-1}>p$ and $n_0>\cdots>n_{k-1}$.\\
{\rm (5)} $\mathbf{V}_{\omega,\omega} \vee  \mathbf{V}_{n_0,n_0}\vee\cdots\vee \mathbf{V}_{n_{k-1},n_{k-1}}$ for some $k<\omega$ and $n_0>\cdots>n_{k-1}$.\\
{\rm (6)} $\mathbf{V}(\mathcal{K}_i)$ for $i<2^{\omega}$.\\
{\rm (7)} $\mathbf{V}\vee \mathbf{V}_1$ and $\mathbf{V}\vee \mathbf{V}_3$ where $\mathbf{V}$ is one of the ones described in {\rm (2)-(6)}.
\end{Theorem}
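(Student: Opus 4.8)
The plan is to argue that the enumeration (1)--(7) is a systematic reading of the case analysis carried out in the discussion immediately preceding the statement, together with the reduction that makes that analysis possible. First I would invoke local finiteness of $\mathbf{M}_1$ (Theorem \ref{D3}) to conclude that every non-trivial subvariety $\mathbf{V}$ is generated by its finite simple members, so that $\mathbf{V}=\mathbf{V}(\mathcal{K})$ for some $\mathcal{K}\subseteq Si_F$. By Theorem \ref{E0} (with $\mathbf{HS}(\mathcal{K})=\mathbf{IS}(\mathcal{K})$ for simple algebras) the finite simple members of $\mathbf{V}(\mathcal{K})$ are exactly those in $\mathbf{IS}(\mathcal{K})$, whence $\mathbf{V}$ is completely determined by which of the $L_i$ and $L_6(m,n)$ it contains. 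Thus the task reduces to deciding, for each admissible pattern of generators, which variety results; the inclusions among the $L_6(m,n)$ are governed by Theorem \ref{E3}, while the auxiliary observations $\mathbf{V}_4\subseteq\mathbf{V}_{m,n}\iff m\ge 1$ and $\mathbf{V}_5\subseteq\mathbf{V}_{m,n}\iff(m,n)\neq(3,3)$ record precisely when $L_4$ and $L_5$ are carried along for free.

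Next I would isolate the behaviour of $L_1$ and $L_3$. Because $Q_1$ and $Q_3$ each possess an order component $Q$ with $Q\cap\zeta(Q)=\emptyset$, neither $L_1$ nor $L_3$ can belong to $\mathbf{V}(\mathcal{L})$ for any $\mathcal{L}\subseteq Si_F\setminus\{L_1,L_3\}$; hence membership of $L_1$ or $L_3$ in $\mathbf{V}$ is detected exactly by their presence in $\mathcal{K}$. Since $\mathbf{V}_1\subseteq\mathbf{V}_3$ (Figure 2), the presence of $L_3$ forces that of $L_1$, so the only possibilities are to adjoin $\mathbf{V}_1$ alone, to adjoin $\mathbf{V}_3$ (which carries $\mathbf{V}_1$), or neither. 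Writing $\mathbf{W}=\mathbf{V}(\mathcal{K}\setminus\{L_1,L_3\})$, every $\mathbf{V}$ is $\mathbf{W}$, $\mathbf{W}\vee\mathbf{V}_1$, or $\mathbf{W}\vee\mathbf{V}_3$; the latter two produce exactly the families in (7) once $\mathbf{W}$ is one of the varieties from (2)--(6), whereas if $\mathbf{W}\subseteq\mathbf{L}=\mathbf{V}(L_0,\dots,L_5)$ these joins remain inside the finite distributive lattice $\mathbf{L}$ and so fall under (1).

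It remains to classify the $\{L_1,L_3\}$-free varieties $\mathbf{W}$. If $\mathbf{W}$ contains no algebra $L_6(m,n)$, then $\mathbf{W}\subseteq\mathbf{V}(L_0,L_2,L_4,L_5)\subseteq\mathbf{L}$, and reading off the decreasing sets of the join-irreducible poset of Figure 2 yields the members of (1). If $\mathbf{W}$ does contain some $L_6(m,n)$, I would introduce $S=\{n\colon L_6(n,n)\in\mathbf{W}\}$ and $T=\{m\colon L_6(m,n)\in\mathbf{W}\text{ for some }n>m\}$ and run precisely the case split of the preceding discussion: $S=\emptyset$ versus $S\neq\emptyset$, each subdivided according as $T$ (respectively $S$) is finite or infinite and, when finite, according as the auxiliary sets $A_i=\{n>p_i\colon L_6(p_i,n)\in\mathbf{W}\}$ are finite or infinite. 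Each branch, via the floor inequality of Theorem \ref{E3} and the observations on $\mathbf{V}_4,\mathbf{V}_5$, delivers one of the normal forms (2)--(6). The main obstacle is exactly this bookkeeping: one must verify both that the division into cases is exhaustive and that in every branch the displayed join is \emph{exactly} $\mathbf{W}$, i.e.\ that its finite simple members coincide with those of $\mathbf{W}$. The delicate direction is the upper bound $q\le p+\left\lfloor(m-p)/2\right\rfloor+n-m$ of Theorem \ref{E3}, which is what forces a genuinely infinite family of generators to collapse to a single $\mathbf{V}_{p,\omega}$ or to $\mathbf{V}_{\omega,\omega}$ rather than to a finite join, and which ultimately underlies the $2^\omega$ pairwise distinct varieties of type (6).
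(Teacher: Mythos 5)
Your proposal is correct and follows essentially the same route as the paper: reduction via local finiteness (Theorem \ref{D3}) and Theorem \ref{E0} to subsets of $Si_F$, isolation of $L_1$ and $L_3$ through the order-component observation and the inclusion $\mathbf{V}_1\subseteq\mathbf{V}_3$, the finite-lattice analysis of $\mathbf{V}(L_0,\ldots,L_5)$ for case (1), and the case analysis on $S$, $T$ and the sets $A_i$ driven by the floor inequality of Theorem \ref{E3} together with the criteria for $\mathbf{V}_4,\mathbf{V}_5\subseteq\mathbf{V}_{m,n}$ for cases (2)--(7). This is precisely the argument the paper gives in the discussion preceding the theorem, of which the theorem is the stated summary.
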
 

Observe that $\mathbf{M_1} = \mathbf{V}_3\vee \mathbf{V}(\{L_6(n,n)\colon 3\leq n<\omega\})$. \\

Also notice that the representation of the subvarieties described in (3), (4) and, consequently, (7) might be redundant. For example, $\mathbf{V}_{7,8}\vee \mathbf{V}_{3,6}=\mathbf{V}_{7,8}$ and, although $\mathbf{V}_{7,8}\vee \mathbf{V}_{2,6}$ is an irredundant representation, we have that $\mathbf{V}_{7,8}\vee \mathbf{V}_{2,6}\vee\mathbf{V}_{9,9}=\mathbf{V}_{2,6}\vee \mathbf{V}_{9,9}$.\\

It is possible to obtain irredundant representations for all of the aforementioned subvarieties, but that requires some technical arguments and it does not add much to the understanding of the lattice $L_V(\mathbf{M}_1)$. For example, in (3) we obtain $\mathbf{V}_{p_0,q_0}\vee\cdots \mathbf{V}_{p_{s-1},q_{s-1}}\vee \mathbf{V}_{p_s,q_s}\vee\cdots\vee \mathbf{V}_{p_{s+t-1},q_{s+t-1}}\vee \mathbf{V}_{n_0,n_0}\vee\cdots\vee \mathbf{V}_{n_{k-1},n_{k-1}}$, for some $s, t, k<\omega$  such that $ s+t+k>0$ where   $p_0>\cdots >p_{s-1}>n_0\geq p_s>\cdots> p_{s+t-1}$, $n_0>\cdots>n_{k-1}$, and, for any $0\leq i<s+t$, $p_i<q_i$, $q_i>p_i+\left\lfloor{(p_{i-1}-p_i)/2}\right\rfloor+q_{i-1}-p_{i-1}$ and $q_s>p_s+\left\lfloor{(n_0-p_s)/2}\right\rfloor$.\\ 

\section{The lattice of subvarieties of $\mathbf{K}_2$ } \label{SF}

In this section we show that $L_V({\mathbf{K_2}})$ is uncountable. Recall that we already know that $ \mathbf{K_2}$ is not locally finite, as seen in Theorem \ref{D6}, and that $L_V({\mathbf{K_1}})$ is an $\omega + 1$ chain, as seen in Theorem \ref{E2}.
Toward this end, the following sequence of regular $pm$-algebras  plays a crucial role.

\begin{Example} \label{E7} 
{\rm For $2\leq n<\omega$, let $A_n=\{a_i\colon 0\leq i<n\}$ and $B_n=\{b_i\colon 0\leq i<n\}$ be disjoint $n$-element sets. Consider $S_n=A_n\cup B_n$ and a disjoint copy $\zeta(S_n)$. Extend $\zeta$ by defining $\zeta(\zeta(x))=x$, for every $x\in S_n$. Let $\leq$ be the partial order induced by, for $0\leq i,j<n$, \[a_i<\zeta (a_j),\, b_i<\zeta(b_j),\]
 \[a_i<\zeta (b_j) \text{ iff } i\neq j, \text{ and } b_i<\zeta (a_j) \text{ iff } i\neq j.\]\par
 It is straightforward to prove that 
  $P_n = ( S_n\cup\zeta (S_n);\leq ,\zeta )$ is a $pm$-space. Let $K_n=E(P_n)$. Observe that, since $a_i<\zeta (a_i) $ and $b_i<\zeta (b_i)$ for every $0\leq i<n$, $K_n$ is a pseudocomplemented Kleene algebra and, as $P_n$ has height $1$, $K_n$ is a regular pseudocomplemented Kleene algebra.}
\end{Example}

\begin{Lemma}\label{E8A} 
{\rm (1)} For $2\leq n<\omega$, the regular pseudocomplemented Kleene algebra $K_n$ is a simple algebra of $\mathbf{K}_2$.\\
{\rm (2)} For $2\leq n, m <\omega$, if $K_n\in \mathbf{IS}(K_m)$, then $n=m$.
\end{Lemma}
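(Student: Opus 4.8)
For part (1), the plan is to lean on Corollary \ref{X6} together with Corollary \ref{C4B}. Example \ref{E7} already records that each $K_n$ is a regular pseudocomplemented Kleene algebra, so it remains only to bound $\zeta$-distances in $P_n$. Because $\ell_\zeta(x,y)=\ell_\zeta(\zeta(x),\zeta(y))=\ell_\zeta(x,\zeta(y))=\ell_\zeta(\zeta(x),y)$, it suffices to check pairs of minimal elements, i.e.\ pairs drawn from $S_n=A_n\cup B_n$. Here a common upper bound is exhibited directly: any two of the $a_i$ lie below a single $\zeta(a_k)$, any two of the $b_i$ lie below a single $\zeta(b_k)$, and $a_i,b_j$ both lie below $\zeta(a_k)$ for any $k\neq j$, which exists since $n\geq 2$. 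Hence every finite $\zeta$-distance in $P_n$ is at most $2$, so $P_n$ has $\zeta$-width $2$; Corollary \ref{C4B} then gives $K_n\in\mathbf{K}_2$, and Corollary \ref{X6} (with $n=2$) gives that $K_n$ is simple. This settles (1).

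For part (2), I would pass to dual spaces: $K_n\in\mathbf{IS}(K_m)$ holds iff there is a surjective $pm$-morphism $\varphi\colon P_m\to P_n$, and since both spaces are finite the topology is irrelevant while $\varphi$ maps $\mathrm{Min}(P_m)$ onto $\mathrm{Min}(P_n)$ and $\mathrm{Max}(P_m)$ onto $\mathrm{Max}(P_n)$. The structural feature I would exploit is that in $P_n$ each minimal element lies below all maximal elements except exactly one: letting $\sigma$ denote the fixed-point-free involution on the minimal elements that swaps $a_i\leftrightarrow b_i$, the unique maximal element not above a minimal $x$ is $\zeta(\sigma(x))$, and dually the unique minimal element $\rho(w)$ not below a maximal $w$ is $\rho(w)=\sigma(\zeta(w))$. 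The first step is to show $\varphi$ intertwines the two involutions, that is $\varphi\circ\sigma=\sigma\circ\varphi$ on minimal elements. Applying the $pm$-condition $\mathrm{Min}(\varphi(w))\subseteq\varphi(\mathrm{Min}(w))$ to a maximal $w$, all minimal elements except $\rho(w)$ map strictly below $\varphi(w)$, so surjectivity on minimal elements forces $\varphi(\rho(w))=\rho(\varphi(w))$; substituting $\rho=\sigma\circ\zeta$ and using $\varphi\zeta=\zeta\varphi$ yields the intertwining.

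The key step, which I expect to be the main obstacle, is then to deduce that $\varphi$ is \emph{injective} on minimal elements. Suppose $\varphi(x)=\varphi(y)=z$ with $x\neq y$ minimal in $P_m$. Since $\sigma$ is injective, $\zeta(\sigma(y))\neq\zeta(\sigma(x))$, and as $\zeta(\sigma(x))$ is the only maximal element not above $x$, we get $x<\zeta(\sigma(y))$. Applying $\varphi$ and using both $\varphi\zeta=\zeta\varphi$ and the intertwining, this gives $z=\varphi(x)<\zeta(\sigma(z))$, contradicting the fact that $\zeta(\sigma(z))$ is precisely the maximal element of $P_n$ not above $z$. Hence $\varphi$ restricts to a bijection $S_m\to S_n$, so $2m=|S_m|=|S_n|=2n$ and $n=m$. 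The delicate recurring point is justifying that order-preservation upgrades to the strict inequality $\varphi(x)<\varphi(w)$ whenever a minimal $x$ lies below a maximal $w$; this is legitimate because for $n,m\geq 2$ no minimal element of $P_n$ or $P_m$ is simultaneously maximal, so the images of minimal and maximal elements remain strictly separated.
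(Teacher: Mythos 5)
Your proposal is correct. Part (1) is essentially the paper's own argument: both invoke Corollary \ref{X6} after checking that any two elements of $P_n$ are at $\zeta$-distance at most $2$; the only cosmetic difference is that the paper reaches the bound via paths $x<\zeta(x)>y$ (exploiting the Kleene inequality $x<\zeta(x)$), while you exhibit common upper bounds $\zeta(a_k)$, $\zeta(b_k)$ directly.

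Part (2) is also correct but organized differently. Both proofs reduce to a surjective $pm$-morphism $\varphi\colon P_m\to P_n$ and both exploit the same combinatorial feature of $P_n$, namely that each minimal element $x$ lies below every maximal element except the single element $\zeta(\sigma(x))$. The paper, however, argues by a direct counting contradiction: assuming $m>n$ forces a collision $\varphi(x)=\varphi(y)$ with $x\neq y$, whence every $z\in S_m$ lies below $\zeta(x)$ or $\zeta(y)$, so by surjectivity all of $S_n$ lies strictly below the single maximal element $\zeta(\varphi(x))$ --- impossible, since $\sigma(\varphi(x))\in S_n$ is not below it. You instead first prove the equivariance $\varphi\circ\sigma=\sigma\circ\varphi$ on minimal elements (using surjectivity and the ``unique excluded element'' structure), and then show that any collision yields $z<\zeta(\sigma(z))$ for $z=\varphi(x)$, contradicting the defining property of $\sigma$; this gives injectivity of $\varphi$ on $S_m$, hence a bijection $S_m\to S_n$ and $m=n$. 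Your route is a step longer but yields a stronger conclusion: any surjective $pm$-morphism $P_m\to P_n$ must be $\sigma$-equivariant and bijective on minimal elements, i.e.\ the spaces $P_n$ are rigid in a strong sense, which is more information than the bare equality $m=n$ that the paper extracts. The strictness point you flag (that $\varphi$ sends a covering pair to a strictly comparable pair) is handled identically in the paper, via the observation in Section \ref{SE} that minimal elements map to minimal elements, maximal to maximal, and these two sets are disjoint in each $P_n$.
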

\begin{proof}
(1) Applying Corollary \ref{X6}, it is sufficient to prove that, for any $x,y\in P_n$, we have $\ell(x,y)\leq 2$ or $\ell(x,\zeta(y))\leq 2$. Suppose that, for distinct $i$ and $ j$, $0\leq i, j <n$, $x\in\{a_i, b_i\}$ and $y\in \{a_j, b_j\}$. Then $x<\zeta(x)$ and $\zeta(x)> y$. So $\ell(x,y)=2$. For $0\leq i<n$, we have $ \ell(a_i, b_i)=2$, since, choosing $j\neq i$, $a_i< \zeta (b_j)$ and $\zeta(b_j)> b_i$. 
So, for any $x,y\in S_n$, $\ell(x,y)\leq 2$ and, consequently, $\ell(\zeta(x),\zeta(y))\leq 2$. From this fact we also conclude that if either $x\in S_n$ and $y\in\zeta (S_n)$, or $x\in \zeta(S_n)$ and $y\in S_n$, then $\ell(x,\zeta(y))\leq 2$.\\

(2) Suppose $K_n\in \mathbf{IS}(K_m)$. Then there exists a surjective $pm$-morphism $\varphi\colon P_m \to P_n$. It is obvious that $m\geq n$. Suppose $m>n$. For such a $\varphi$, we know that $\varphi(S_m)=\varphi({\rm Min}(P_m))={\rm Min}(P_n)=S_n$ and $\varphi(\zeta(S_m))=\zeta(\varphi(S_m))=\zeta(S_n)$. As $|S_m|=2m$, $|S_n|=2n$ and $2m>2n$, there exist $x, y\in S_m$ such that $x\neq y$ and $\varphi(x)=\varphi(y)$. Since $x, y$ are distinct elements of $S_m$, we have that, for any $z\in S_m$, $z<\zeta(x)$ or $z<\zeta(y)$, and then $\varphi(z)<\varphi(\zeta(x))=\zeta(\varphi(x))$ or $\varphi(z)<\varphi(\zeta(y))=\zeta(\varphi(y))$. But $\varphi(x)=\varphi(y)$, so, for any $z\in S_m$,  $\varphi(z)<\zeta(\varphi(x))$. If $\varphi(x)=a_i$, for some $0\leq i<n$, then $b_i\notin \varphi(S_m)$, since $b_i\not < \zeta(a_i)$. If $\varphi(x)=b_i$, for some $0\leq i<n$, then $a_i\notin \varphi(S_m)$, since $a_i\not < \zeta(b_i)$. Either way, we have reached a contradiction.  Thus, $m=n$.
\end{proof}  
 
Notice that, for any  $2\leq n<\omega$, $K_n\notin {\mathbf{K}_1}$, since $\ell_{\zeta}(a_0, b_0)=2$, as $\ell(a_0, b_0)=2$ and $\ell(a_0,\zeta (b_0))=3$. \\

Consider ${\mathbf V}(\{ K_n\colon 2\leq n<\omega \})$ which is a subvariety of  $\mathbf{K}_2$.  Although, as shown in Theorem \ref{D6}, $\mathbf{K}_2$ is not locally finite, as we will now prove, ${\mathbf V}(\{ K_n\colon 2\leq n<\omega \})$ is, our aim being to establish Theorem \ref{E8}.\\

\begin{Lemma} \label{E9}
The variety ${\mathbf V}(\{ K_n\colon 2\leq n<\omega \})$ is locally finite.
\end{Lemma}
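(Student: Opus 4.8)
The plan is to invoke Mal'cev's theorem exactly as in the proof of Theorem \ref{D3}: it suffices to produce a function $f\colon\omega\to\omega$, independent of $n$, such that for every $n$ with $2\le n<\omega$ any $N$-element subset of $K_n$ generates a subalgebra of cardinality at most $f(N)$. Since $P_n$ is finite, $K_n=E(P_n)$ is just the lattice of all decreasing subsets of $P_n$ under $\cap,\cup$, together with $X^\ast=P_n\setminus[X)$ and $X^\prime=\zeta(P_n\setminus X)$. The structural observation driving the argument is that $P_n$ splits into the $n$ four-element blocks $B_i=\{a_i,b_i,\zeta(a_i),\zeta(b_i)\}$, that $\zeta$ maps each $B_i$ onto itself (swapping $a_i\leftrightarrow\zeta(a_i)$ and $b_i\leftrightarrow\zeta(b_i)$), and that each maximal point of $P_n$ lies above every minimal point except exactly one. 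I would encode each $X\in K_n$ by its family of local traces $t_i(X):=X\cap B_i$; since the blocks partition $P_n$, we have $X=\bigcup_i t_i(X)$, so $X$ is completely determined by $(t_i(X))_{0\le i<n}$, and each $t_i(X)$ ranges over at most the $16$ subsets of $B_i$.

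First I would record how the four operations act on traces. For $\cap$ and $\cup$ this is immediate and purely block-local, since membership is computed pointwise. For $^\prime$ it is again block-local: because $\zeta$ preserves each $B_i$, one has $a_i\in X^\prime\iff\zeta(a_i)\notin X$, and similarly for the other three points, so $t_i(X^\prime)$ depends only on $t_i(X)$. The operation $^\ast$ is the one that is not block-local. For a minimal point it is still local, since $a_i\in X^\ast\iff a_i\notin X$ (minimal points lie above nothing but themselves), and likewise for $b_i$. For a maximal point, however, $\zeta(a_i)\in[X)$ holds precisely when $\zeta(a_i)\in X$ or $X$ meets $S_n\setminus\{b_i\}$; thus $t_i(X^\ast)$ is governed by the local trace $t_i(X)$ together with the single global predicate ``$X\cap S_n$ contains a minimal element distinct from $b_i$'' (and the companion predicate with $a_i$ in place of $b_i$ for the coordinate $\zeta(b_i)$).

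Given generators $X_1,\dots,X_N$, I would then attach to each index $i$ its profile $\pi(i)=(X_1\cap B_i,\dots,X_N\cap B_i)$, of which at most $16^N$ distinct values occur, and call $Y\in K_n$ \emph{profile-constant} if $\pi(i)=\pi(i')$ forces $t_i(Y)=t_{i'}(Y)$. Each generator is profile-constant by definition of $\pi$, as are the constants $\emptyset$ and $P_n$. The heart of the proof is to show that the profile-constant elements form a subalgebra: closure under $\cap,\cup,{}^\prime$ is clear from the block-locality noted above, and the only real obstacle is closure under $^\ast$. This hinges on verifying that, for a profile-constant $Y$ and distinct indices $i\ne i'$ with $\pi(i)=\pi(i')$, the predicates ``$Y\cap S_n$ contains a minimal $\ne b_i$'' and ``$Y\cap S_n$ contains a minimal $\ne b_{i'}$'' have the same truth value. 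The point is that the degenerate case cannot arise between distinct same-profile indices: if $b_i\in Y$ then equality of profiles forces $b_{i'}\in Y$, so $Y\cap S_n$ contains the two distinct minimals $b_i,b_{i'}$ and both predicates hold; while if $b_i\notin Y$ then $b_{i'}\notin Y$, and both predicates reduce to $Y\cap S_n\ne\emptyset$. Hence $^\ast$ preserves profile-constancy, the verification for the $\zeta(b_i)$-coordinate being symmetric.

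Consequently every element of $\langle X_1,\dots,X_N\rangle$ is profile-constant, hence determined by a function assigning to each of the at most $16^N$ occurring profiles one of the $16$ possible local traces. This gives $|\langle X_1,\dots,X_N\rangle|\le 16^{\,16^{N}}$, a bound depending on $N$ but not on $n$. Taking $f(N)=16^{\,16^{N}}$ and applying Mal'cev's theorem \cite[VI.14 Theorem 3]{Ma70} as in Theorem \ref{D3}, the variety $\mathbf{V}(\{K_n\colon 2\le n<\omega\})$ is locally finite. I expect the one genuinely delicate step to be precisely the closure of the profile-constant elements under $^\ast$; everything else is bookkeeping once the block decomposition and the ``all minimals but one'' property of the maximal points are in hand.
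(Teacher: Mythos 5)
Your proof is correct, and while it shares with the paper the outer skeleton that is essentially forced by the statement --- the Mal'cev reduction, exactly as in Theorem \ref{D3}, to a bound $f(N)$, uniform in $n$, on the size of $N$-generated subalgebras of the $K_n$ --- the mechanism producing that bound is genuinely different. The paper transplants the Claim from the proof of Theorem \ref{D3}: it forms the Boolean subalgebra $\mathcal{F}$ of $\mathcal{P}(S_n)$ generated by the traces $X_i\cap S_n$ and $\zeta(X_i)\cap S_n$ of the generators, arranged (via the maps $Y\mapsto B(Y)$ and $Z\mapsto A(Z)$ swapping $a_j\leftrightarrow b_j$) so that $\{a_j\}\in\mathcal{F}$ iff $\{b_j\}\in\mathcal{F}$, and then checks case by case that the three-type family
\[ K(\mathcal{F})=\mathcal{F}\cup\{\zeta(X)\cup S_n\colon X\in\mathcal{F}\}\cup\{(\zeta(x)]\colon \{x\}\in\mathcal{F}_I\} \]
is a subalgebra of $K_n$ containing the generators, giving the bound $3|\mathcal{F}|$; this rests on the trichotomy that a decreasing subset of $P_n$ either misses $\zeta(S_n)$, or contains $S_n$, or is a principal downset $(\zeta(x)]$. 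You instead exploit the block structure of $P_n$: all operations act blockwise except $^\ast$ at the maximal points, where only the two global predicates ``$Y$ meets $S_n\setminus\{b_i\}$'' and ``$Y$ meets $S_n\setminus\{a_i\}$'' intervene, and the generated subalgebra cannot separate blocks carrying equal generator-profiles. Your key lemma --- closure of the profile-constant sets under $^\ast$ --- plays the role that the partner-swap condition on $\mathcal{F}$ plays in the paper, and your handling of the only delicate case is complete and correct: if $b_i\in Y$ then profile-constancy puts $b_{i'}\in Y$, so each predicate has a witness, and if $b_i\notin Y$ then both predicates collapse to $Y\cap S_n\neq\emptyset$. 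What the paper's route buys is an explicit normal form for the bounding subalgebra, re-using machinery already set up for the $Q_6(I,S)$ spaces; what yours buys is a shorter, symmetry-based argument that needs no classification of the decreasing sets of $P_n$ and would apply essentially verbatim to any family of spaces assembled uniformly from isomorphic blocks with finitely many block-indexed global predicates. Both bounds ($16^{\,16^{N}}$ for you, $3\cdot 2^{2^{4N}+1}$ for the paper) are doubly exponential in $N$.
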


\begin{proof}
As in Theorem \ref{D3}, it is sufficient to show that for each $N<\omega$, there exists $N^\prime <\omega$, depending on $N$, such that any set of $N$ elements in any one of the pseudocomplemented Kleene algebras $K_n$, $2\leq n<\omega$, generates a subalgebra that has at most  $N^\prime $ elements. \\

Let $2\leq n<\omega$ and consider the $pm$-space $P_n$. Observe that, for each $i$, $0\leq i<n$, we have \[(\zeta(a_i)]=\{\zeta(a_i)\}\cup (S_n\setminus \{b_i\}) \text{ and } (\zeta(b_i)]=\{\zeta(b_i)\}\cup (S_n\setminus \{a_i\}).\]

A claim analogous to the one in Theorem \ref{D3} holds in this context too.\\

For any set $C$, let $\mathcal{P}(C)$ denote the Boolean algebra of all subsets of $C$. \\

\noindent {\bf Claim.} Let $\mathcal{F}$ be a Boolean subalgebra of  the Boolean algebra  $\mathcal{P}(S_n)$ such that, for $0\leq i<n$, $\{a_i\}\in \mathcal{F}$ if and only if $\{b_i\}\in \mathcal{F}$, let $\mathcal{F}_I$ be the set of singletons of $\mathcal{F}$ and

 \[K(\mathcal{F}) :=\mathcal{F}\cup \{\zeta(X)\cup S_n\colon X\in \mathcal{F}\}\cup \{(\zeta(x)]\colon \{x\}\in\mathcal{F}_I\}.\]  Then $K(\mathcal{F})$ is a subalgebra of $K_n$.\\

The elements of $K(\mathcal{F})$ are decreasing sets of $P_n$.\\

As in the proof of the claim in Theorem \ref{D3}, $\emptyset\in K(\mathcal{F})$, $S_n\cup \zeta(S_n)\in K(\mathcal{F})$ and  finite unions and intersections  of elements in $\mathcal{F}\cup \{\zeta(X)\cup S_n\colon X\in \mathcal{F}\}$ belong to $K(\mathcal{F})$. Now, let $X\in \mathcal{F}$ and $\{x\}\in \mathcal{F}_I$. Suppose $x=a_i$, for some $0\leq i<n$ (for $x=b_i$ it is analogous). Then $\{a_i\}\in \mathcal{F}$ and, consequently, $\{b_i\}\in\mathcal{F}$. If $b_i\in X$, then $X\cup (\zeta(a_i)]=X\cup \{\zeta(a_i)\}\cup (S_n\setminus \{b_i\})=\zeta(\{a_i\})\cup S_n\in K(\mathcal{F})$, whilst, if $b_i\notin X$,  $X\cup (\zeta(a_i)]=(\zeta(a_i)]$.
 Moreover, $X\cap (\zeta(a_i)]=X\cap (S_n\setminus \{b_i\})\in\mathcal{F}\subseteq K(\mathcal{F})$. 
 Further, we have $(\zeta(X)\cup S_n)\cup (\zeta(a_i)]=\zeta(X\cup \{a_i\})\cup S_n\in K(\mathcal{F})$ and, if $a_i\in X$, $(\zeta(X)\cup S_n)\cap (\zeta(a_i)]=(\zeta(a_i)]$,  otherwise  $(\zeta(X)\cup S_n)\cap (\zeta(a_i)]=S_n\setminus \{b_i\}\in \mathcal{F}\subseteq K(\mathcal{F})$. 
 Finally, let $\{y\}\in \mathcal{F}_I$ such that $y\neq a_i$. Then $\{y\}\in \mathcal{F}$ and $\{a_i, y\}\in \mathcal{F}$ and we have $(\zeta(a_i)]\cup (\zeta(y)]=\zeta(\{a_i, y\})\cup S_n\in K(\mathcal{F})$. If $y=a_j$ with $j\neq i$, then $\{b_j\}\in \mathcal{F}$ and $(\zeta(a_i)]\cap (\zeta(y)]=(\zeta(a_i)]\cap (\zeta(a_j)]=S_n\setminus \{b_i, b_j\}\in\mathcal{F}\subseteq K(\mathcal{F})$. If $y=b_j$, $0\leq j<n$, then $\{a_j\}\in \mathcal{F}$ and $(\zeta(a_i)]\cap (\zeta(y)]=(\zeta(a_i)]\cap (\zeta(b_j)]=S_n\setminus \{b_i, a_j\}\in\mathcal{F}\subseteq K(\mathcal{F})$. Thus $K(\mathcal{F})$ is closed under unions and intersections.\\
 
Now we show that $K(\mathcal{F})$ is closed under pseudocomplementation. Let $X\in \mathcal{F}$. We have $X^{\ast}=(S_n\cup\zeta(S_n))\setminus [X)=S_n\setminus X$, unless $X=\emptyset$ or $|X|=1$. In the first case, $X^{\ast}=S_n\cup\zeta(S_n)$. In the latter, we have $X=\{a_i\}$ or $X=\{b_i\}$, for some $0\leq i<n$, and then both $\{a_i\}$ and $\{b_i\}$ belong to $\mathcal{F}_I$  and $\{a_i\}^{\ast}=(\zeta(b_i)]\in K(\mathcal{F})$, $\{b_i\}^{\ast}=(\zeta(a_i)]\in K(\mathcal{F})$. For $\zeta(X)\cup S_n$, we have $(\zeta(X)\cup S_n)^{\ast}=\emptyset$. Finally, if $\{a_i\}\in \mathcal{F}_I$, for some $0\leq i<n$, then $\{a_i\}\in \mathcal{F}$ and $\{b_i\}\in \mathcal{F}$, and we have $(\zeta(a_i)]^{\ast}=\{b_i\}\in\mathcal{F}\subseteq K(\mathcal{F})$. Analogously, $(\zeta(b_i)]^{\ast}=\{a_i\}\in\mathcal{F}\subseteq K(\mathcal{F})$, if $\{b_i\}\in \mathcal{F}_I$, for some $0\leq i<n$.\\

It remains to show that $K(\mathcal{F})$ is closed for the de Morgan unary operation. Let $X\in \mathcal{F}$. We have $X^{\prime}=(S_n\cup \zeta(S_n))\setminus \zeta(X)=\zeta(S_n\setminus X)\cup S_n\in K(\mathcal{F})$ and $(\zeta(X)\cup S_n)^{\prime}=(S_n\cup \zeta(S_n))\setminus \zeta(\zeta(X)\cup S_n)=S_n\setminus X\in \mathcal{F}\subseteq K(\mathcal{F})$. For the third type of elements in $K(\mathcal{F})$, we have $\{a_i\}\in \mathcal{F}_I$ if and only if $\{b_i\}\in \mathcal{F}_I$,  and $(\zeta(a_i)]^{\prime}=(\zeta(b_i)]\in K(\mathcal{F})$ and $(\zeta(b_i)]^{\prime}=(\zeta(a_i)]\in K(\mathcal{F})$.\\

The proof of the claim is now complete. \\

Let $N$ be a positive integer. Let $\{ X_i\colon 0\leq i<N\}$ be a set of  decreasing sets of $P_n$. For each $i$, $0\leq i<N$, we have that $X_i=(X_i\cap S_n)\cup (X_i\cap \zeta(S_n))=(X_i\cap S_n)\cup \zeta(\zeta(X_i)\cap S_n)$ and $X_i\cap S_n=(X_i\cap A_n)\cup(X_i\cap B_n)$ and $\zeta(X_i)\cap S_n=(\zeta(X_i)\cap A_n)\cup (\zeta(X_i)\cap B_n)$.
 Consider $T=\{X_i\colon 0\leq i<N\}\cup \{\zeta (X_i) \colon 0\leq i<N\}$. This set has, at most, $N_1 = 2N$ elements.\\
 
 For each $Y\subseteq A_n$ and each $Z\subseteq B_n$, let $B(Y)=\{b_i\colon  a_i\in Y \}$ and $A(Z)=\{a_i\colon b_i\in Z \}$. Clearly, for any $Y\subseteq A_n$ and $Z\subseteq B_n$ we have $A(B(Y))=Y$, $B(A(Z))=Z$, $B(A_n\setminus Y)=B_n\setminus B(Y)$ and $A(B_n\setminus Z)=A_n\setminus A(Z)$. It is also clear that $B(Y_1\cap Y_2)=B(Y_1)\cap B(Y_2)$, $A(Z_1\cap Z_2)=A(Z_1)\cap A(Z_2)$, for any $Y_1, Y_2\subseteq A_n$ and $Z_1, Z_2\subseteq B_n$.  
Consider \[\mathcal{A}=\{X\cap A_n\colon X\in T\}\cup \{A(X\cap B_n)\colon X\in T\} \mbox{ and } \mathcal{B}=\{X\cap B_n\colon X\in T\}\cup \{B(X\cap A_n)\colon X\in T\}.\] 
Obviously, each of the sets $\mathcal{A}$ and $\mathcal{B}$ has, at most, $N_2=2N_1$ elements.  Let $\mathcal{F}_{\mathcal{A}}$ be the Boolean subalgebra of the Boolean algebra $\mathcal{P}(A_n)$ generated by $\mathcal{A}$ and $\mathcal{F}_{\mathcal{B}}$ be the Boolean subalgebra of the Boolean algebra $\mathcal{P}(B_n)$ generated by $\mathcal{B}$ and let $\mathcal{F}=\{Y\cup Z\colon Y\in \mathcal{F}_{\mathcal{A}} \mbox{ and } Z\in \mathcal{F}_{\mathcal{B}}\}$ which is a Boolean subalgebra of $\mathcal{P}(S_n)$, since $S_n$ is the disjoint union of $A_n$ and $B_n$. Since a free Boolean algebra on $N_2$ generators has $2^{(2^{N_2})}$ elements, each one of $\mathcal{F}_{\mathcal{A}}$ and $\mathcal{F}_{\mathcal{B}}$ has, at most, $N_3=2^{(2^{N_2})}$ elements, and so, the cardinality of $\mathcal{F}$ is less than or equal to $N_4=(N_3)^2$. Now, suppose that $\{a_i\}\in \mathcal{F}$, $0\leq i<n$. Then $\{a_i\}\in \mathcal{F}_{\mathcal{A}}$ and, since  $\mathcal{F}_{\mathcal{A}}$ is generated by $\mathcal{A}$ and
$\{a_i\}$ is an atom, we have $\{a_i\}=W_1\cap\cdots\cap W_k$, where $1\leq k<\omega$, and, for each $j$, $1\leq j\leq k$, $W_j\in  \mathcal{A}$ or $A_n\setminus W_j\in \mathcal{A}$. So $\{b_i\}=B(\{a_i\})=B(W_1)\cap\cdots\cap B(W_k)$ and, for each $j$, $1\leq j\leq k$, $B(W_j)\in  \mathcal{B}$ or $B_n\setminus B(W_j)= B(A_n\setminus W_j)\in \mathcal{B}$. Thus $\{b_i\}\in\mathcal{F}_{\mathcal{B}}\subseteq \mathcal{F}$. Analogously, we prove that if $\{b_i\}\in \mathcal{F}$, $0\leq i<n$, then $\{a_i\}\in \mathcal{F}$.\\

By the above claim, $K(\mathcal{F})$ is a subalgebra of $K_n$. This subalgebra has, at most, $3N_4$ elements.
It remains to show that, for any $i$, $0\leq i<N$, $X_i\in K(\mathcal{F})$. We have that $X_i=(X_i\cap S_n)\cup \zeta(\zeta(X_i)\cap S_n)$ and $X_i\cap S_n=(X_i\cap A_n)\cup(X_i\cap B_n)$ and $\zeta(X_i)\cap S_n=(\zeta(X_i)\cap A_n)\cup (\zeta(X_i)\cap B_n)$.
 If $\zeta(X_i)\cap S_n=\emptyset$, then  $X_i=(X_i\cap A_n)\cup(X_i\cap B_n)\in \mathcal{F}\subseteq K(\mathcal{F})$ since, as $X_i\in T$,  $X_i\cap A_n\in \mathcal{A}\subseteq \mathcal{F}_{\mathcal{A}}$ and $X_i\cap B_n\in \mathcal{B}\subseteq \mathcal{F}_{\mathcal{B}}$. Now, suppose $\zeta(X_i)\cap S_n=\{a_j\}$ for some $0\leq j<n$. Then $\zeta(X_i)\cap A_n=\{a_j\}$. As $\zeta(X_i)\in T$, we have $\{a_j\}\in \mathcal{A}\subseteq \mathcal{F}_{\mathcal{A}}\subseteq \mathcal{F}$. As $\{a_j\}$ is a singleton, it belongs to $\mathcal{F}_I$. Since $\zeta(X_i)\cap S_n=\{a_j\}$, we have $\zeta(\zeta(X_i)\cap S_n)=\{\zeta(a_j)\}$ and $X_i=(X_i\cap S_n)\cup\{\zeta(a_j)\}$. But $X_i$ is decreasing, so  either $X_i=\{\zeta(a_j)\}\cup (S_n\setminus \{b_j\})=(\zeta(a_j)]$ or $X_i=\zeta(\{a_j\})\cup S_n$. Either way, $X_i\in K(\mathcal{F})$. We prove similarly that if $\zeta(X_i)\cap S_n=\{b_j\}$ for some $0\leq j<n$, then $X_i\in K(\mathcal{F})$. Finally, suppose $|\zeta(X_i)\cap S_n|\geq 2$. As $X_i\cap \zeta(S_n)=\zeta(\zeta(X_i)\cap S_n)$, we have $|X_i\cap \zeta(S_n)|\geq 2$ and, since $X_i$ is decreasing, $S_n\subseteq X_i$. So $X_i=\zeta(\zeta(X_i)\cap S_n)\cup S_n$. But $\zeta(X_i)\cap S_n=(\zeta(X_i)\cap A_n)\cup (\zeta(X_i)\cap B_n)$ and, as $\zeta(X_i)\in T$, we have $\zeta(X_i)\cap A_n\in \mathcal{A}\subseteq \mathcal{F}_{\mathcal{A}}$ and $\zeta(X_i)\cap B_n\in \mathcal{B}\subseteq \mathcal{F}_{\mathcal{B}}$. Consequently, $\zeta(X_i)\cap S_n \in \mathcal{F}$ and $X_i\in K(\mathcal{F})$.
 \end{proof}

\begin{Lemma} \label{E10}
$|L_V({\mathbf V}(\{ K_n\colon 2\leq n<\omega \}))| = 2^\omega$.
\end{Lemma}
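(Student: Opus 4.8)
The plan is to mimic the proof of Corollary~\ref{E4}(2): I would assign to each non-empty subset $\mathcal{K}$ of $\{K_n\colon 2\leq n<\omega\}$ the subvariety $\mathbf{V}(\mathcal{K})$ of $\mathbf{V}(\{K_n\colon 2\leq n<\omega\})$ and show that distinct subsets yield distinct subvarieties. Since there are $2^\omega$ non-empty subsets of a countable set, this produces $2^\omega$ pairwise distinct members of $L_V(\mathbf{V}(\{K_n\colon 2\leq n<\omega\}))$, which gives the lower bound; the reverse inequality is automatic.

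First I would record the ingredients that make Theorem~\ref{E0} applicable to every such $\mathcal{K}$. By Lemma~\ref{E8A}(1), each $K_n$ is a finite simple algebra of $\mathbf{K}_2\subseteq\mathbf{M}_2$. Moreover $\mathbf{V}(\mathcal{K})\subseteq \mathbf{V}(\{K_n\colon 2\leq n<\omega\})$, which is locally finite by Lemma~\ref{E9}; as any subvariety of a locally finite variety is itself locally finite, $\mathbf{V}(\mathcal{K})$ is locally finite. Hence Theorem~\ref{E0} (taken with $n=2$) yields that every finite simple algebra in $\mathbf{V}(\mathcal{K})$ belongs to $\mathbf{IS}(\mathcal{K})$.

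The separating argument then runs as follows. Given distinct non-empty subsets $\mathcal{K}_1,\mathcal{K}_2$ of $\{K_n\colon 2\leq n<\omega\}$, I may assume without loss of generality that there is some $K_n\in\mathcal{K}_1\setminus\mathcal{K}_2$. Were $K_n\in\mathbf{V}(\mathcal{K}_2)$, then, $K_n$ being finite and simple, the previous paragraph would give $K_n\in\mathbf{IS}(\mathcal{K}_2)$, so $K_n\in\mathbf{IS}(K_m)$ for some $K_m\in\mathcal{K}_2$; but Lemma~\ref{E8A}(2) then forces $n=m$, contradicting $K_n\notin\mathcal{K}_2$. Thus $K_n\notin\mathbf{V}(\mathcal{K}_2)$ and $\mathbf{V}(\mathcal{K}_1)\neq\mathbf{V}(\mathcal{K}_2)$. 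For the upper bound, I would note that every variety over the finite signature of $pm$-algebras satisfies at most countably many identities, hence has at most $2^\omega$ subvarieties; combining the two bounds gives $|L_V(\mathbf{V}(\{K_n\colon 2\leq n<\omega\}))|=2^\omega$.

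Since Lemmas~\ref{E8A} and~\ref{E9} already carry the full technical burden, I expect no genuinely hard step to remain. The only point demanding a moment's care is the applicability of Theorem~\ref{E0}, namely verifying that $\mathbf{V}(\mathcal{K})$ is locally finite for \emph{every} choice of $\mathcal{K}$; this is immediate once one observes that it is a subvariety of the locally finite variety supplied by Lemma~\ref{E9}.
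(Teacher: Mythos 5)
Your proposal is correct and follows essentially the same route as the paper: both use the separating argument that a member $K_n$ of one subset but not the other cannot lie in the variety generated by the other, via Theorem \ref{E0} (justified by Lemma \ref{E8A}(1) and the local finiteness from Lemma \ref{E9}) together with Lemma \ref{E8A}(2). Your explicit verification that every $\mathbf{V}(\mathcal{K})$ inherits local finiteness as a subvariety, and your remark on the trivial upper bound $2^\omega$, merely spell out what the paper leaves implicit.
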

\begin{proof}  Let $\mathcal{K}$ and $\mathcal{K}'$ be non-empty subsets of $\{ K_n\colon 2\leq n<\omega \}$ such that $\mathcal{K}\neq \mathcal{K}'$.
Without loss of generality, we may assume that there exists $K_n\in \mathcal{K}\setminus \mathcal{K}'$. Obviously, $K_n\in \mathbf{V}(\mathcal{K})$. If $K_n\in \mathbf{V}(\mathcal{K}')$, then, applying Theorem \ref{E0}, $K_n\in \mathbf{IS}(K_m)$ for some $K_m\in \mathcal{K}'$ and, by (2) of Lemma \ref{E8A}, $n=m$ which is a contradiction.  Thus, $K_n\notin \mathbf{V}(\mathcal{K}')$ and $\mathbf{V}(\mathcal{K})\neq  \mathbf{V}(\mathcal{K}')$.
\end{proof}

Since ${\mathbf V}(\{ K_n\colon 2\leq n<\omega \})$ is a subvariety of $\mathbf{K}_2$, the following is immediate.\\

\begin{Theorem} \label{E8}
$|L_V(\mathbf{K}_2)| = 2^\omega$.\hfill{$\Box$}
\end{Theorem}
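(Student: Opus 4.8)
The plan is to obtain $2^\omega$ as a two-sided bound. For the lower bound I would exploit the fact that $\mathbf{V}(\{K_n\colon 2\leq n<\omega\})$ sits inside $\mathbf{K}_2$ as a subvariety, together with the monotonicity of the passage to subvarieties: every subvariety of a subvariety $\mathbf{W}\subseteq \mathbf{K}_2$ is again a subvariety of $\mathbf{K}_2$. In lattice-theoretic terms, $L_V(\mathbf{W})$ is precisely the principal ideal $\{\mathbf{U}\in L_V(\mathbf{K}_2)\colon \mathbf{U}\subseteq \mathbf{W}\}$ of $L_V(\mathbf{K}_2)$, so in particular $|L_V(\mathbf{K}_2)|\geq |L_V(\mathbf{W})|$. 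Taking $\mathbf{W}=\mathbf{V}(\{K_n\colon 2\leq n<\omega\})$ and invoking Lemma \ref{E10}, which asserts $|L_V(\mathbf{W})|=2^\omega$, yields $|L_V(\mathbf{K}_2)|\geq 2^\omega$ at once.

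For the matching upper bound I would argue abstractly. Since $\mathbf{K}_2$ is a variety over the finite signature $(2,2,1,1,0,0)$, there are only countably many terms in countably many variables, and hence only countably many identities. Every subvariety of $\mathbf{K}_2$ is uniquely determined by its equational theory, namely the set of identities it satisfies, which is a set of identities extending that of $\mathbf{K}_2$; there are at most $2^\omega$ such sets. Therefore $|L_V(\mathbf{K}_2)|\leq 2^\omega$, and combining the two bounds gives $|L_V(\mathbf{K}_2)|=2^\omega$.

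There is essentially no obstacle at this concluding step: all the genuine difficulty has already been absorbed into the supporting results, so the statement is indeed immediate from Lemma \ref{E10}. The real crux lies upstream, in Lemma \ref{E10}, whose proof rests on the family $(K_n)$ introduced in Example \ref{E7}, on the local finiteness of $\mathbf{V}(\{K_n\colon 2\leq n<\omega\})$ established in Lemma \ref{E9}, and on the simplicity together with the non-isomorphism data for the $K_n$ recorded in Lemma \ref{E8A}; these combine, via Theorem \ref{E0}, to force distinct subsets of $\{K_n\colon 2\leq n<\omega\}$ to generate distinct subvarieties, producing the required $2^\omega$ pairwise distinct members of $L_V(\mathbf{K}_2)$. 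Once Lemma \ref{E10} is in hand, the only remaining content of Theorem \ref{E8} is the soft ideal-monotonicity observation above, supplemented by the standard counting argument for the upper bound.
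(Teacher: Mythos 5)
Your proposal is correct and follows essentially the same route as the paper: the paper also deduces Theorem \ref{E8} immediately from Lemma \ref{E10} together with the containment $\mathbf{V}(\{K_n\colon 2\leq n<\omega\})\subseteq \mathbf{K}_2$, leaving the standard upper bound (countably many identities over a finite signature, each subvariety determined by its equational theory) implicit, which you simply spell out.
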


The  ad hoc nature of Example \ref{E7} strongly suggests that, unlike $L_V({\mathbf{M_1}})$ which is also uncountable, the structure of $L_V({\mathbf{K_2}})$ (and so too $L_V({\mathbf{M_2}})$) defies a meaningful description.\\

\footnotesize

\small

\ \\
\ \ \ \ \ \ \ \ \\
\begin{tabbing}
    Department of Mathematics \hspace*{18em}\=Centro de Matem\a'{a}tica e Aplica\c{c}\~{o}es \\
   State University of New York  \>  Departamento de Matem\a'{a}tica\\
   New Paltz, NY 12561, USA \>  Faculdade de Ci\^{e}ncias e Tecnologia\\
   \ \> Universidade NOVA de Lisboa\\
   adamsm@newpaltz.edu  \> 2829-516 Caparica, Portugal\\
   
    sankapph@newpaltz.edu \> \ \\
    \ \> jvc@fct.unl.pt\\

\end{tabbing}

\end{document}